\DeclareMathOperator{\spn}{span}
\title{Detecting random bifurcations via rigorous enclosures of large deviations rate functions}
 \author[1]{Alexandra Blessing (Neam\c tu)}
\author[2]{Alex Blumenthal}
\author[3]{Maxime Breden}
\author[4,5]{Maximilian Engel}
\date{\today}
\affil[1]{Department of Mathematics and Statistics,
Universität Konstanz,
78457 Konstanz, Germany}
\affil[2]{School of Mathematics,
Georgia Institute of Technology, 686 Cherry Street, Atlanta, GA 30332-0160 USA}
\affil[3]{CMAP, CNRS, Ecole polytechnique, Institut Polytechnique de Paris, 91120 Palaiseau, France}
\affil[4]{Institute of Mathematics, Freie Universit\"at Berlin, Arnimallee 7, 14195 Berlin, Germany}
\affil[5]{KdV Institute, University of Amsterdam, Science Park 105-107, 1098 XG Amsterdam, The Netherlands}
\theoremstyle{plain}
\newtheorem{thm}{Theorem}[section]
\newtheorem{cor}[thm]{Corollary}
\newtheorem{lem}[thm]{Lemma}
\newtheorem{prop}[thm]{Proposition}
\theoremstyle{definition}
\newtheorem{defn}[thm]{Definition}
\newtheorem{rmk}[thm]{Remark}
\newtheorem{ass}[thm]{Assumption}
\newtheorem{exam}[thm]{Example}
\newcommand{\C}{\mathbb{C}}
\newcommand{\E}{\mathbb{E}}
\newcommand{\N}{\mathbb{N}}
\renewcommand{\P}{\mathbb{P}}
\newcommand{\R}{\mathbb{R}}
\newcommand{\Z}{\mathbb{Z}}
\newcommand{\Bc}{\mathcal{B}}
\newcommand{\Fc}{\mathcal{F}}
\newcommand{\Lc}{\mathcal{L}}
\newcommand{\Ec}{\mathcal E}
\newcommand{\Xc}{\mathcal X}
\newcommand{\Dc}{\mathcal D}
\newcommand{\Ic}{\mathcal{I}}
\newcommand{\barf}{\bar{f}}
\newcommand{\barl}{\bar{\lambda}}
\newcommand{\barX}{\bar{X}}
\renewcommand{\a}{\alpha}
\renewcommand{\b}{\beta}
\renewcommand{\l}{\lambda}
\newcommand{\T}{\mathbb T}
\newcommand{\Cc}{\mathcal C}
\newcommand{\rmd}{\mathrm{d}}
\newcommand{\rmD}{\mathrm{D}}
\renewcommand{\epsilon}{\varepsilon}
\renewcommand{\l}{\lambda}
\newcommand{\ol}{\overline{\l}}
\newcommand{\ul}{\underline{\l}}
\numberwithin{equation}{section}
\DeclareMathOperator{\spec}{spec}
\newcommand{\parahead}[1]{\bigskip 

\noindent {\bf #1 } 

\medskip 
}
\begin{document}

\maketitle


\abstract{
The main goal of this work is to provide a description of transitions from uniform to non-uniform snychronization in diffusions based on large deviation estimates for finite time Lyapunov exponents. These can be characterized in terms of moment Lyapunov exponents which are principal eigenvalues of the generator of the tilted (Feynman-Kac) semigroup. Using a computer assisted proof, we demonstrate how to determine these eigenvalues and investigate the rate function which is the Legendre-Fenchel transform of the moment Lyapunov function. 
We apply our results to two case studies: the pitchfork bifurcation and a two-dimensional toy model,  also considering the transition to a positive asymptotic Lyapunov exponent.
}

\section{Introduction}
  \label{sec:intro}


Local bifurcations in ordinary differential equations result in qualitative changes to the long-time behavior of the dynamics through the creation or destruction of equilibria or periodic orbits. 
As past work has uncovered, however, the situation is more subtle in the presence of noise, e.g., for stochastic differential equations forced by white-in-time noise. 
    
    To illustrate the point and motivate the present manuscript, consider the normal form of a pitchfork bifurcation forced by additive, white-in-time noise: 
     \begin{equation}
        \label{eq:pitchforkIntro}
        \rmd X_t = (\alpha X_t - X^3_t) \rmd t + \sigma \rmd W_t, \ X_0 \in \mathbb{R} \, . 
        \end{equation}
    Absent noise ($\sigma = 0$), the resulting ODE undergoes a pitchfork bifurcation at $\alpha = 0$, resulting in the creation of a pair of stable equilibria and a change of stability for the equilibrium at $0$. 
    When $\sigma > 0$, however, it was shown in \cite{CrauelFlandoli} that some features of the pitchfork are `destroyed' in the following sense: if $\varphi^t$ denotes the stochastic flow generated by equation \eqref{eq:pitchforkIntro} -- the analogue of the solution flow to an ODE -- then for all fixed initial $X_0, Y_0 \in \R$, one has that 
    \begin{align} \label{eq:syncIntro}| \varphi^t(X_0) - \varphi^t(Y_0)| \to 0 \qquad \text{ as } t \to \infty  \end{align}
    with probability 1, irrespective of the value of $\alpha$. Contrast this to the standard pitchfork ($\sigma=0$), where \emph{synchronization} as in \eqref{eq:syncIntro} also occurs unconditionally when $\alpha< 0$, but only if $X_0, Y_0$ are nonzero and share the same sign when $\alpha>0$.
    
    However, it was pointed out in Callaway et al. \cite{Callawayetal} that one can recover some ``signature'' of the original pitchfork by examining quantitatively the synchronization property \eqref{eq:syncIntro}. Their argument is framed around the statistics of \emph{finite-time Lyapunov exponents} (FTLE)
   \begin{align} \label{eq:FTLEintro} \lambda_t(\alpha; X_0) = \frac{1}{t} \log | (\varphi^t)'(X_0) |\end{align}
    for fixed initial $X_0$. 
    When $t \to \infty$, the ergodic theorem implies $\lambda_t(\alpha; X_0)$ converges, with probability 1 and for all $X_0 \in \R$, to the \emph{asymptotic} Lyapunov exponent $\lambda(\alpha) \in \R$. For equation \eqref{eq:pitchforkIntro}, it holds that $\lambda(\alpha) < 0$ for all $\alpha \in \R$, compatible with the synchronization property \eqref{eq:syncIntro}. 
    
    On the other hand, the finite-time versions $\lambda_t(\alpha; X_0)$ undergo a qualitative transition through the bifurcation value $\alpha = 0$: 
    \begin{itemize}
        \item[(i)] For $\alpha < 0$, it holds that $\lambda_t(\alpha; X_0) < 0$ for all $X_0 \in \R$ and with probability 1; while 
        \item[(ii)] for $\alpha > 0$ the event $\{ \lambda_t(\alpha; X_0) > 0\}$ has \emph{positive probability for all $t > 0, X_0 \in \R$.}
    \end{itemize}
    Statement (ii) holds because, with positive probability, trajectories $X_t = \varphi^t(X_0)$ can linger near the unstable region at the origin for arbitrarily long amounts of time. 
    Viewed another way, for $\alpha > 0$ synchronization as in \eqref{eq:syncIntro} is only true asymptotically, as $t \to \infty$, while transient derivative growth can persist for arbitrarily long times with positive probability, preventing the merging of $\varphi^t(X_0)$ and  $\varphi^t(Y_0)$. For additional discussion on the connection between FTLE and synchronization, see \cite{Callawayetal}. 

    \subsubsection*{Summary of this paper}




    The purpose of this paper is to study quantitatively the statistics of $\lambda_t$ and the transition from negative FTLE to positive FTLE with positive probability, focusing on the case of stochastic flows generated by SDE. 
    Our results are framed around the \emph{large deviations rate function} tracking the statistics of deviations of the FTLE from its asymptotic value. For stochastic flows on $\R$, and for $r > \lambda(\alpha)$, this rate function takes the form
    \[\Ic_\alpha(r) := - \lim_{t \to \infty} \frac1t \log \P(\lambda_t(\alpha; X_0) > r) \in (0,\infty] \, ,  \]
    where the RHS limit definition\footnote{Here we use the convention that $\log 0 = -\infty$.} exists and is constant over all $X_0 \in \R$ and under mild conditons \cite{ArnoldKliemann1987}. 
    In particular, when $\lambda(\alpha)<0$, the rate $\Ic_\alpha(0)$ quanitifies the likelihood of transient nonsynchronization in the sense that 
    \[\P(\lambda_t(\alpha; X_0) > 0) \approx e^{- t \Ic_\alpha(0)}\]
    at long times $t$.


    
This paper contains: 
\begin{itemize}
    \item[(a)] Conditions guaranteeing the existence of the rate function $\mathcal{I}$ for a broad class of stochastic flows (Section \ref{sec:prelim}); 
    \item[(b)] Some general results on transitions from negative to positive FTLE (Section \ref{sec:randBifurc}), including: 
        \begin{itemize}
            \item[(i)] A general result for stochastic flows connecting transitions to positive FTLE with the LDP rate function (Theorem \ref{thm:main} and Proposition \ref{prop:LDPsignChange}); 
            \item[(ii)] Some necessary and some sufficient conditions for transitions to positive FTLE in special cases; 
        \end{itemize}
    \item[(c)] Computer-assisted estimates of the rate function $\Ic$ for two sets of worked examples: 
    \begin{itemize}
        \item[(i)] the stochastic pitchfork \eqref{eq:pitchforkIntro}; and 
        \item[(ii)] a toy model given by a 2d linear SDE. 
    \end{itemize}

\end{itemize}

The 2d linear SDE model is not ``dynamical'', i.e., originating as the linearization of some nonlinear system, but it can be realized as an appropriate scaling limit of a prototypical system exhibiting shear-induced chaos \cite{LinYoung,BreEng23,ChemnitzEngel}. We include it here to demonstrate how a computer-assisted proof can be carried out for the estimation of LDP rate functions in higher dimensions, entailing 
 a treatment of the \emph{projective process}, a Markov process tracking tangent directions of the stochastic flow. 
 
 We note as well that the methods involved in the computer-assisted analysis for models (c)(i), (c)(ii) are substantially different: roughly speaking, (c)(i) amounts to finding eigenvalues of a self-adjoint operator on an unbounded space, and (c)(ii) to finding eigenvalues of a non-self-adjoint operator on a compact space. 

\subsubsection*{Validated computer-assisted enclosures of $\Ic_\a(0)$ for the stochastic pitchfork}


Figure~\ref{fig:pitchfork_various_alphas_intro} depicts a computer-assisted estimate of the rate function $\Ic_\alpha(0)$ for the stochastic pitchfork \eqref{eq:pitchforkIntro} as $\alpha$ is varied. 
Notable here is the nonlinear dependence of $\alpha \mapsto \Ic_\alpha(0)$, which we certify to exhibit a local minimum in the interval $\alpha \in [1.225,1.229]$ (see Theorem~\ref{th:pitchfork_minimum}). 
For comparison, we depict in Figure~\ref{fig:pitchfork_various_alphas_LE}
approximate values for $\lambda(\alpha)$ obtained from the expression $\lambda(\alpha) = \int_\R (\alpha - 3x^2) \rho(x) \rmd x$ (see Theorem~\ref{thm:mlp} and Theorem~\ref{thm:noncompactLDP} for more background on this so called Furstenberg-Khasminskii formula, and~\eqref{eq:defnStatDenstiyPitchforkIntro} for the formula of the stationary density $\rho$).

In view of the connection between synchronization and FTLE, the enclosures presented in Figure \ref{fig:pitchfork_various_alphas_intro} suggest that synchronization is at its `weakest' at the minimum of $\alpha \mapsto \Ic_\alpha(0)$. Heuristically, the existence of this minimum should have to do with the tradeoff, between increased expansion near $0$, which increases FTLE, and a shortening of the time spent in the expanding region, which decreases LE and FTLE. While this heuristic applies to FTLE as well as to LE, it is notable that the LE and FTLE exhibit their local extrema at distinct values of the parameter $\alpha$: the argmax for $\alpha \mapsto \lambda(\alpha)$ at $\alpha \approx 0.25$ occurs appreciably below the argmin $\alpha \approx 1.2$ for $\alpha \mapsto \Ic_\alpha(0)$.

\begin{figure}[h!]
    \includegraphics[width=0.49\linewidth]{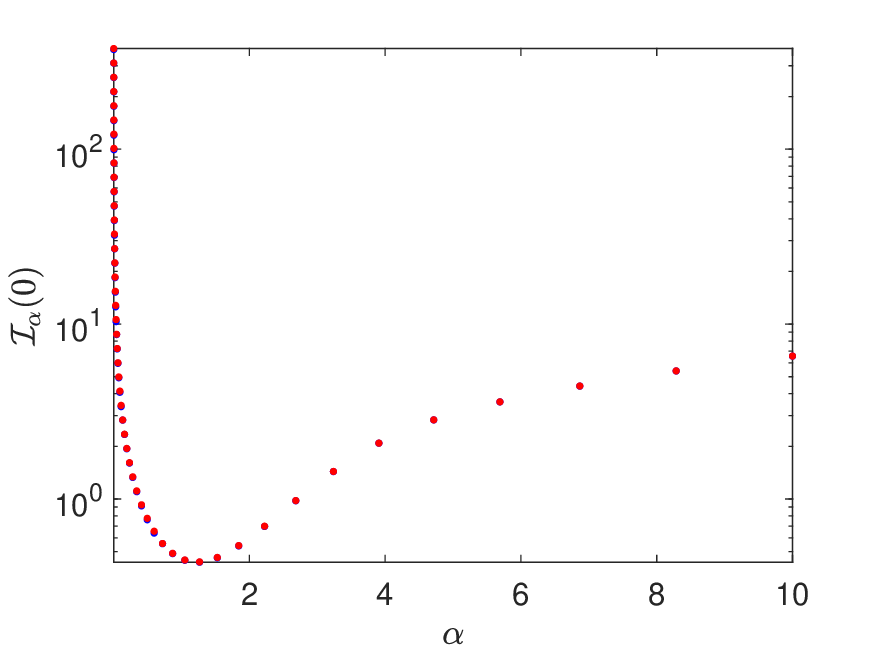}
    \hfill
    \includegraphics[width=0.49\linewidth]{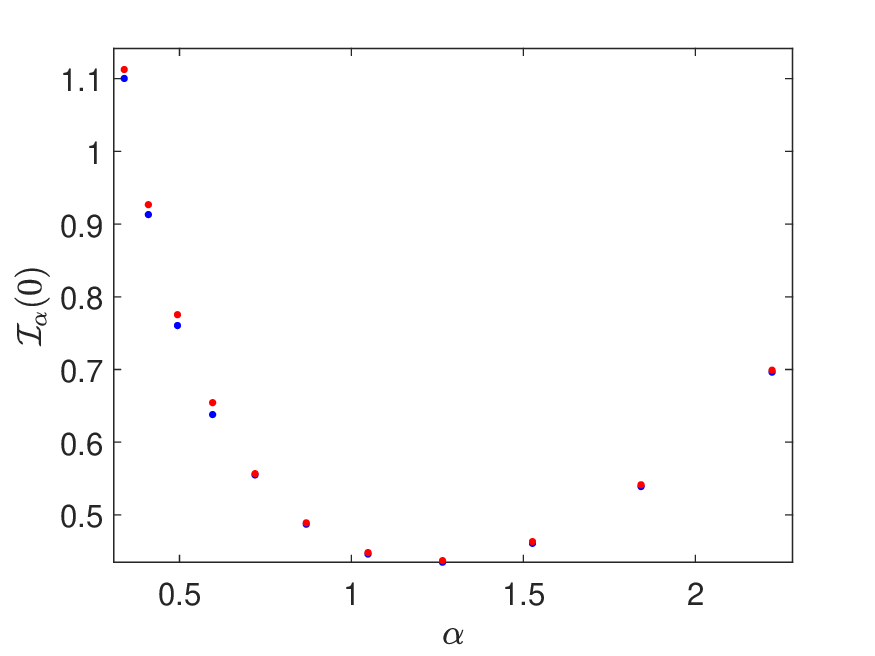}
    \caption{Rigorous enclosures of $\Ic_\alpha(0)$ for different values of $\alpha$, and $\sigma=1$ in~\eqref{eq:pitchforkIntro}. Upper-bounds are shown in red, and lower bounds in blue, but they are close enough to be hard to distinguish. The right figure is zoomed in near a local minimum at $\alpha \approx 1.2$.}
    \label{fig:pitchfork_various_alphas_intro}
\end{figure}

\begin{figure}[h!]
\centering
    \includegraphics[width=0.5\linewidth]{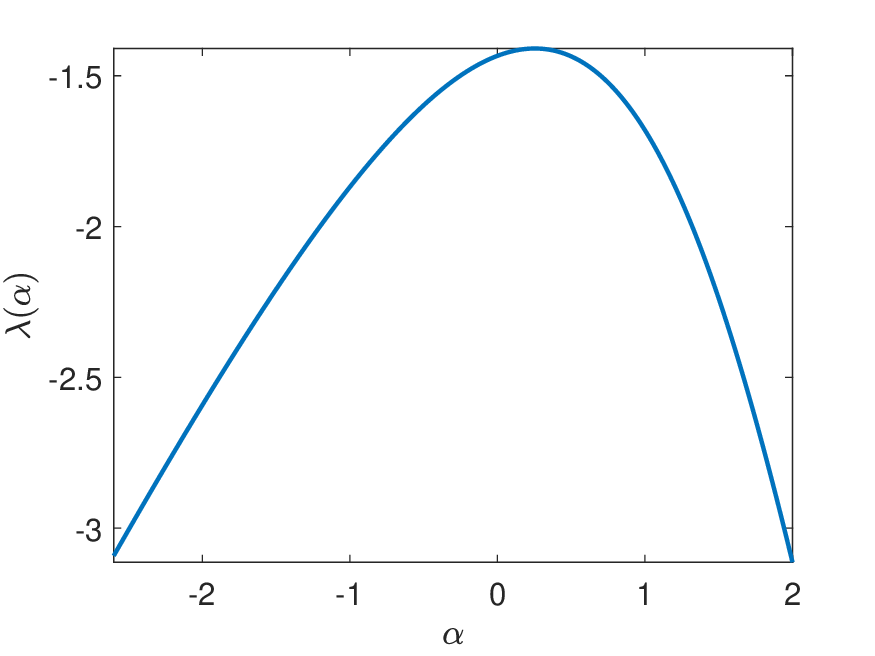}
    \caption{Numerical computation of $\lambda(\alpha)$ for different values of $\alpha$, and $\sigma=1$ in~\eqref{eq:pitchforkIntro}. 
    \label{fig:pitchfork_various_alphas_LE}}
\end{figure}



\subsection*{Previous work}

\noindent {\bf Stochastic bifurcations. } Work in this field has historically distinguished between {\it phenomenological bifurcations}, to do with changes in the stationary measure, e.g., unimodal to bimodal, versus dynamical bifurcations, to do with the change in the stability type of typical sample paths signalled by a change in sign of the (asymptotic) Lyapunov exponent of the system. For a general discussion, see, e.g., the overviews \cite{arnold1996toward} or \cite[Chapter 9]{Arnoldbook} or the representative works \cite{blumenthal2022positive, EngelLambRasmussen, GalMonNis20, lamb2015topological, teramae2004robustness, zmarrou2007bifurcations}.

 {The additive pitchfork model \eqref{eq:pitchforkIntro} exhibits a phenomenological bifurcation, as can be seen from the form of the stationary density 
\begin{align} \label{eq:defnStatDenstiyPitchforkIntro}\rho_{\sigma, \alpha}(x) = \frac{1}{Z} \operatorname{exp} \frac{2}{\sigma^2}\left(\frac12 \alpha x^2 - \frac14 x^4\right) \, , \end{align}
with $Z > 0$ a normalization constant. At a fixed noise amplitude $\sigma > 0$, the stationary density for \eqref{eq:pitchforkIntro} goes from unimodal at $\alpha < 0$ to bimodal at $\alpha > 0$, reflecting the transition of stationary mass away from the now unstable region near $0$ to the new stable regions near $\pm \sqrt{\alpha}$.   } 

{This co-incidence of P-bifurcations and the nonuniform synchronization scenario is likely to be common for a variety of one-dimensional stochastic flows. We caution, however, that a priori there is no reason to expect the two to coincide, especially for higher-dimensional systems: a P-bifurcation is a property of the stationary measure alone and has no bearing on FTLE, which have more to do with the fine statistics of tangent directions. 
  }

The transition of FTLE from negative to positive has been established in a variety of settings, including: near a Hopf bifurcation with additive noise \cite{DoanEngeletal}; as noise amplitude is increased in the logistic map family in a periodic window \cite{sato2018dynamical}; and for noised pitchfork bifurcations in Chafee-Infante and Swift-Hohenberg SPDEs \cite{BN23, BlumenthalEngelNeamtu2021}. We also refer the reader to Section \ref{sec:outlook} for additional situations where the FTLE transition might take place.

\bigskip

\noindent {\bf Large deviations. } Our work leans on existing large deviations principles for Lyapunov exponents and their connection to the spectral theory of tilted (i.e.~Feynman-Kac) semigroups \cite{arnold1984formula, ArnoldKliemann1987, Stroock86}. More broadly, there is a long literature of work applying large deviations principles and related ideas to describe metastable states and other transient behaviors in random systems \cite{freidlin1998random}. 

Particularly relevant to us is the work of Berglund and Gentz \cite{berglund2002pathwise} which considers \eqref{eq:pitchforkIntro} in a small-noise regime ($\sigma \ll 1$) taking the parameter $\alpha$ to vary slowly through the bifurcation. The work \cite{berglund2002pathwise} seeks to understand the role of small additive noise on \emph{bifurcation delay} -- a tendency for slowly-varying parameters to `delay' the transition of an equilibrium from stable to unstable. It is shown that additive noise reduces the order of the bifurcation delay present in the deterministic system. See also the related studies \cite{berglund2002metastability, berglund2002sample} and, for a broader perspective on the role of noise in slowly-varying systems, the book \cite{berglund2006noise}.

A crucial difference in our case is that we work beyond perturbative regimes, with (albeit time-independent) macroscopic values of the bifurcation parameter $\alpha$ as well as the noise amplitude $\sigma$. This regime is harder to treat analytically, and computer-assisted proof is a natural tool, allowing for instance to discover nonperturbative features such as the discrepancy between the argmin of $\alpha \mapsto \Ic_\alpha(0)$ and the argmax of $\alpha \mapsto \lambda(\alpha)$ depicted in Figures \ref{fig:pitchfork_various_alphas_intro}, \ref{fig:pitchfork_various_alphas_LE}.


\bigskip

\noindent {\bf Computer-assisted proof. } Computer-assisted proofs in dynamical systems go back, at least, to the pioneering work of Lanford on the Feigenbaum conjecture~\cite{Lan82}. The usage of rigorous numerics for stochastic systems is more recent; lately, computed-assisted proofs have been leveraged in order to establish the existence of some forms of \emph{transition} in random dynamical systems outside of perturbative regimes~\cite{GalMonNis20,BreEng23,BreChuLamRas24}. However, these works study transitions characterized by an asymptotic Lyapunov exponent, which, as already mentioned, is too crude of a tool for the kind of problems we focus on in this paper. 

As will be explained in Section~\ref{sec:prelim}, the central quantities for our study of the large deviation rate function $\Ic_\alpha$ are moment Lyapunov exponents $\Lambda_\alpha(p)$. These can be characterized as principal eigenvalues of $p$-dependent elliptic equations (tilted generators), and the computer-assisted part of our argument consists in getting tight and rigorous enclosures for such eigenvalues. 
For self-adjoint operators, there exist computer-assisted techniques specifically devised for rigorously enclosing eigenvalues (see e.g.~\cite{Plu90,Liu15} and \cite[Chapter 10]{NakPluWat19} for a broader overview). More generally, one can write down a zero-finding problem for an eigenpair, and then use more generic computer-assisted techniques for enclosing zeros based on the Newton-Kantorovich Theorem~\cite{BerLes15}. As will also be highlighted in Section~\ref{sec:prelim}, having precise but only punctual information on $\Lambda_\alpha$ (that is, knowing $\Lambda_\alpha(p)$ for a few specific values of $p$) is already enough to precisely enclose $\Ic_\alpha(0)$. However, even more information becomes available if one can control, at least locally, the map $p\mapsto\Lambda_\alpha(p)$ and some of its derivatives. This will be achieved thanks to recent developments on rigorous continuation techniques from~\cite{Bre23} (see also~\cite{AriGazKoc21}). We refer to Section~\ref{sec:CAP} for more details on the different computer-assisted techniques we employ depending on the specifics of our two examples, and to Section~\ref{sec:outlook} for open problems related to computer-assisted proofs suggested by our investigations.

\bigskip

\noindent {\bf Potential applications. } 
FTLEs have encountered growing interest and usage in different subfields of climate and Earth science, serving as indicators of transitions between metastable regimes. A finite-time framework is particularly valuable here in view of the simple fact that both empirical and numerical data provide only finite time series, and so in principle the asymptotic Lyapunov exponent need not be directly observed.

Numerical FTLE and other dynamical observables of the quasi-geostrophic model of Marschall and Molteni \cite{marshall1993toward} have been studied in \cite{lucarini2020new}. In this context, it is argued that the emergence and disappearance of \emph{atmospheric blocking} events corresponds to an increase in the number of linearly independent unstable directions for the finite-time dynamics, i.e., an increase in the dimension of unstable finite-time \emph{covariant Lyapunov vectors} -- see, e.g., \cite{ginelli2007characterizing}.

A data-based method \cite{metzner2012analysis} has been used in \cite{quinn2021dynamical} to yield a linear model that exhibits FTLEs and CLVs, indicating stable and unstable directions corresponding with atmospheric regimes in the North Atlantic Oscillation. Such methods and their reliability have been tested in \cite{viennet2022guidelines}.

Another field of application concerns glacial cycles, i.e.~the life time and recurrence patterns of glacial periods. They have been studied under quasi-periodic astronomic forcing \cite{mitsui2014dynamics}, yielding non-chaotic strange attractors. The transitions between ordered periods and chaotic transients have been investigated also in regimes with additive noise \cite{mitsui2016effects}.
A theoretical quantitative understanding, as presented here, may well be relevant for such models.

\section{Preliminaries}
  \label{sec:prelim}

This section lays out necessary preliminaries and assumptions required for the definition of stochastic flows, Lyapunov exponents and rate functions. Section \ref{subsec:compact2} addresses the case of a compact state space $M$, already covered in the literature (see \cite{BaxendaleStroock88} or, for linear diffusions on $\mathbb{R}^d$ driven by a process on $M$, see \cite{ArnoldKliemann1987}), while Section \ref{subsec:noncompact2} discusses what further assumptions are necessary in the noncompact case.

\subsection{Large deviations and FTLE on a compact state space} \label{subsec:compact2}

Let $M$ be a compact, connected, smooth Riemannian manifold of dimension $d$ and 
consider the general form of a stochastic differential equation 
\begin{equation}
\label{eq:SDE_gen}
\rmd X_t = f_0(X_t)\,\rmd t + \sum_{j=1}^m f_j(X_t) \circ \rmd W_t^j\,,  \ X_0 = x \in M,
\end{equation}
where the $W_t^j$, $j\geq 1$ are $m$ independent Brownian motions and $\circ$ denotes the Stratonovich differential. Here, the vector fields $f_0, f_1, \dots, f_m$ are assumed to be smooth, hence \eqref{eq:SDE_gen} admits unique strong solutions for all initial data. 
We write
\begin{equation}
\label{eq:G}
G = f_0 + \frac{1}{2} \sum_{j=1}^m f_j^2
\end{equation}
for the generator of $(X_t)$ in H\"ormander form. 

For vector fields $f, g: M \to TM$, we denote by $[f,g]$ the Lie bracket between the two vector fields.

\begin{ass}\label{ass:statDensity} \ 
    \begin{itemize}
        \item[(a)] Given the vector fields $(f_j)$ in equation~\eqref{eq:SDE_gen}, we consider the collection of vector fields $\mathcal V_k$ given by
$$ \mathcal{V}_0 = \{ f_j \,:\, j > 0 \}, \quad \mathcal V_{k+1} = \mathcal V_k \cup \{ [g, f_j] \,:\, g \in \mathcal V_k, \, j \geq 0 \},$$ 
and the vector spaces, for $x \in M$,
$$ \mathcal V_k(x) = \spn \{ f(x) \,:\, f \in \mathcal V_k \}.$$ 
     We assume that~\eqref{eq:SDE_gen}     satisfies the \emph{parabolic H\"ormander condition} 
        \begin{align}\label{eq:parHormander}
            \cup_{k \geq 1} \mathcal V_k(x) = T_x M, \quad \text{for all } x \in M.
        \end{align}
        \item[(b)] The SDE \eqref{eq:SDE_gen} satisfies the following \emph{path controllability condition}: for any $X_0 \in M$ and $T > 0$, the law of $(X_t)_{0 \leq t \leq T}$ is locally positive in $C_{X_0}([0,T], M)$, i.e.~attributes positive measure to any non-empty open subset of $C_{X_0}([0,T], M)$ with respect to the compact-open topology, where $C_x([0,T], M)$ is the space of continuous paths $\gamma : [0,T] \to M$ with $\gamma(0) = x \in M$. 

        
    \end{itemize}
\end{ass}






Assumption \ref{ass:statDensity}(a) implies absolute continuity of the transition kernels $P^t(x, \cdot)$ for all fixed $x \in M$ \cite{hormander1967hypoelliptic}, while (b) can be checked for SDEs using control theory and the Stroock-Varadhan Theorem \cite[Theorem 6.1]{bellet2006ergodic}. Assumption \ref{ass:statDensity}(b) implies  the following strong irreducibility of the Markov process $(X_t)$: 
 \begin{align} \label{eq:irred} P^t(x, U) > 0 \quad \text{ for all } \quad U \subset M \text{ open.} \end{align}
 Here, for $t > 0$ and $K \subset M$ Borel, we write $P^t(x, K) = \P(X_t \in K |X_0 = x)$ for the Markov kernel of $(X_t)$.
Note that assumption \ref{ass:statDensity}(b) may be relaxed to a condition on the end points $X_T$ of the paths for the results of this section. However, we will need the full assumption on the whole path for the proof of Theorem~\ref{thm:main}. 
In order to work with the same set of conditions throughout the following, we have chosen this formulation of assumption \ref{ass:statDensity}(b).

Taken together, it is standard that Assumptions \ref{ass:statDensity}(a) and (b) imply that $(X_t)$ admits a unique stationary measure $\nu$ with smooth, strictly positive density $\rho$. 

In the following, we write $\langle, \rangle_x$ for the metric on $T_x M$ at $x$ and  $\| \cdot \|_x$ for the corresponding norm, suppressing $x$ in the notation when it is clear from the context.
Furthermore, for a vector field $f: M \to TM$, we write $\rmD f$ for the covariant derivative of the \emph{Levi-Civita connection}, viewed for fixed $x \in M$ as a linear operator $\rmD f(x) : T_x M \to T_x M$. 
Finally, for a linear operator $A : T_x M \to T_x M$, we will write $A^\top : T_x M \to T_x M$ for its transpose with respect to the inner product, i.e., the unique linear operator for which $\langle A^\top v, w \rangle = \langle v, A w\rangle$ for all $v, w \in T_x M$.



We further consider the extended variational process $(X_t, Y_t)$ on $TM$ given by~\eqref{eq:SDE_gen} and
\begin{equation}
\label{eq:var_process}
\rmd Y_t = \rmD f_0(X_t) Y_t\,\rmd t + \sum_{j=1}^m \rmD f_j(X_t) Y_t \circ \rmd W_t^j\,,  \ Y_0 \in T_x M\, .
\end{equation}

We can now proceed similarly to \cite{ArnoldKliemann1987}, working more generally on the tangent bundle $T M$, see \cite{BaxendaleStroock88}.
We introduce the polar coordinates $\|Y_t\| \in \mathbb{R}_{> 0}$ and $s_t = Y_t/ \|Y_t\|$. Identifying $s_t = - s_t$ we obtain the \emph{projective process} $(X_t,s_t)$ on the projective bundle $P M$ with fibers $P_x M = P(T_x M) \cong P^{d-1}$: 
\begin{equation}
\label{eq:projective_process}
\rmd s_t =  h_0(X_t, s_t) \rmd t 
+ \sum_{j=1}^m  h_j(X_t, s_t) \circ \rmd W_t^j, \quad  s_0 = \frac{Y_0}{\|Y_0\|},
\end{equation} 
where
\begin{align*}
h_j (X,s) =  \rmD f_j(X) s - \langle s, \rmD f_j(X) s \rangle s.
\end{align*}
In particular, $(X_t, s_t)$ is a diffusion process on $PM$ whose generator is given by
\begin{equation}
\label{eq:generator_bundle}
 L = (f_0 + h_0) + \frac{1}{2} \sum_{j=1}^m (f_j + h_j)^2.
\end{equation}
Finally, for $(X,s) \in PM$ we write $\mathbb{P}_{(X, s)}, \mathbb{E}_{(X,s)}$ for probabilities and expectations conditioned on $(X_0, s_0) = (X, s)$. The notation $\mathbb{P}_X, \mathbb{E}_X$ 
is defined analogously.

\begin{ass}\label{ass:projStat} \ 
    \begin{itemize}
        \item[(a)] The vector fields $\begin{pmatrix}
            f_j \\ h_j
        \end{pmatrix}$ on $PM$ satisfy the analogue of the parabolic H\"ormander condition \eqref{eq:parHormander} on $PM$. 
        \item[(b)] The $(X_t, s_t)$ process satisfies an analogue of the path controllability condition on $PM$. 
    \end{itemize}
    Like before, (a) and (b) taken together imply the existence of a unique stationary measure $\mu$ on $PM$ with smooth, positive density on $PM$. Observe that Assumption \ref{ass:statDensity} is implied by Assumption \ref{ass:projStat}, and that in this setting the stationary measure $\mu$ on $PM$ projects to the unique stationary measure $\nu$ on $M$ \cite{ArnoldKliemannOel, ArnoldOelPardoux}.
\end{ass}

In addition, one may readily deduce that the radial component of $Y_t$, which measures the growth of the variational process, satisfies, in Stratonovich form,
\begin{equation*}
 \|Y_t\| = \|Y_0\| \exp \left( \int_0^t  q_0(X_\tau, s_\tau) \rmd \tau + \sum_{j=1}^m \int_0^t q_j(X_\tau, s_\tau)  \circ \rmd W_\tau^j \right) \, , 
\end{equation*}
which in It\^o form reads
\begin{equation}
\label{eq:linear_growth}
 \|Y_t\| = \|Y_0\| \exp \left( \int_0^t  Q(X_\tau, s_\tau) \rmd \tau + \sum_{j=1}^m \int_0^t q_j(X_\tau, s_\tau)  \rmd W_\tau^j \right) \, . 
\end{equation}
In particular, when $\| Y_0\| = 1$ is fixed (as we will do implicitly throughout what follows), $\| Y_t\|$ depends only on the representative $s_0 \in P_x M$. Here, we have written 
\begin{align}\label{eq:defnQ2}\begin{aligned}
q_j(X,s) &= \langle s, \rmD f_j(X) s \rangle, \\
Q(X,s) &= q_0(X, s) + \sum_{j=1}^m  (f_j + h_j) q_j(X,s) \,, 
\end{aligned}\end{align}
where the $(f_j+h_j)$'s are treated as derivations acting on functions on $PM$.


\subsubsection*{Lyapunov exponents and moment Lyapunov exponents}

In this paper, we study the asymptotic statistics of the finite time Lyapunov exponents (FTLEs)
\begin{equation}
\label{eq:FTLE}
\lambda_t(X_0, s_0) = \frac{1}{t} \log \|Y_t\| \, .
\end{equation}
When the initial data $(X_0, s_0)$ is clear from context, we will simply write $\lambda_t$. 

One can describe the asymptotic statistics of the FTLEs via all typical probabilistic categories, including a law of large numbers governing the asymptotic limit 
\[\lambda(X_0, s_0) = \lim_{t \to \infty} \lambda_t(X_0, s_0),  \]
and a large deviations principle concerning the rate at which this limit is realized. 
The large deviations prinicple is expressible in terms of the \emph{moment Lyapunov exponents}
\begin{align}\label{eq:defnMLE2} \Lambda(p; (X, s)) =   \lim_{t \to \infty} \frac{1}{t} \log \mathbb{E}_{(X,s)} \| Y_t \|^p \, , \end{align}
when this limit exists, where here $p \in \mathbb{R}$ is a fixed parameter. 

Below, we will obtain $\Lambda(p)$ from spectral properties of the \emph{tilted semigroup} 
$S^t_p$ acting on bounded measurable observables $\psi: PM \to \mathbb{R}$ by
\begin{equation}
\label{eq:tilted_semigroup}
S^t_p \psi(X,s) = \mathbb{E}_{(X,s)} \left[ \psi(X_t, s_t) \exp \left\{p\left( \int_0^t  Q(X_\tau, s_\tau) \rmd \tau + \sum_{j=1}^m \int_0^t q_j(X_\tau, s_\tau)  \rmd W_\tau^j \right) \right\}  \right]  \,. 
\end{equation} 

The following is standard -- see, e.g., \cite{ArnoldKliemann1987}. 
\begin{thm}\label{thm:mlp}
Under Assumption \ref{ass:projStat}, the following holds: 

\begin{enumerate}[label=(\roman*)]

\item 
For all $(X_0, s_0) \in PM$, the limit  
\begin{align}\label{eq:asymptoticLE2} \lambda(X_0, s_0) = \lim_{t \to \infty} \frac{1}{t} \log  \|Y_t\| = \lambda \end{align}
exists with probability 1,
 and is almost-surely independent of $(X_0, s_0)$. The asymptotic \emph{Lyapunov exponent} $\lambda$ is given by the \emph{Furstenberg-Khasminskii formula}
\begin{equation}
\label{eq:FK_general}
\lambda = \int_{PM} Q(X,s)  \, \rmd \mu(X,s).
\end{equation}

\item  
For all $(X_0, s_0) \in PM$ and $p \in \mathbb{R}$, the limit  
\begin{align}\label{eq:convMLE2} \Lambda(p; X_0, s_0) = \lim_{t \to \infty} \frac{1}{t} \log \mathbb{E}_{(X_0, s_0)} \|Y_t\|^p 
\end{align}
exists with probability 1 and is independent of $(X_0, s_0)$. 

\item For all $t \geq 0$, it holds that $e^{t \Lambda(p)}$ is the simple dominant eigenvalue of $S^t_p$, treated as a bounded linear operator on the space of continuous functions on $PM$ with the uniform norm.


\item $\Lambda: \mathbb{R} \to \mathbb{R}$ is convex and analytic with $\Lambda(0) =0$ and $\Lambda'(0) = \lambda$.

\item The function $\gamma: \mathbb{R} \to \mathbb{R}$ defined by 
\begin{align}\label{eq:defineGamma}\gamma(p) = \begin{cases}
    \lambda & p = 0 \\ \frac{\Lambda(p)}{p} & p \neq 0 
\end{cases}\end{align}
is nondecreasing and analytic.

\end{enumerate}

\end{thm}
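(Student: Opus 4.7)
\textbf{Proof plan for Theorem \ref{thm:mlp}.}

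For \emph{(i)}, the plan is to rewrite the log-growth of $\|Y_t\|$ using~\eqref{eq:linear_growth}:
\[
\frac{1}{t}\log\|Y_t\| \;=\; \frac{1}{t}\int_0^t Q(X_\tau,s_\tau)\,\rmd\tau \;+\; \frac{1}{t}\sum_{j=1}^m \int_0^t q_j(X_\tau,s_\tau)\,\rmd W_\tau^j.
\]
Under Assumption~\ref{ass:projStat}, the projective diffusion $(X_t,s_t)$ on the compact space $PM$ admits a unique (hence ergodic) stationary measure $\mu$, so Birkhoff's ergodic theorem applied to the continuous function $Q$ (which is bounded on $PM$) delivers the almost sure limit $\int_{PM} Q\,\rmd\mu$ for the first term, independently of $(X_0,s_0)$. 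For the martingale term, the quadratic variation is bounded linearly in $t$ (since the $q_j$ are bounded on $PM$), so the law of large numbers for continuous martingales yields vanishing of the second term after division by $t$. This gives both the almost sure convergence and the Furstenberg--Khasminskii formula~\eqref{eq:FK_general}.

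For \emph{(ii)} and \emph{(iii)}, the main idea is to study $S_p^t$ via Krein--Rutman/Perron--Frobenius theory. The generator of the tilted semigroup is the second-order elliptic operator $L + pV_p$ on $PM$ (with $V_p$ a first-order correction coming from~\eqref{eq:defnQ2} and the Itô correction from $pq_j$), and by Assumption~\ref{ass:projStat}(a) it is hypoelliptic, so $S_p^t$ is a positive integral operator with a smooth, strictly positive kernel for $t>0$; in particular it is compact on $C(PM)$ and strongly positive in the sense of the strong irreducibility~\eqref{eq:irred} (lifted to $PM$ via Assumption~\ref{ass:projStat}(b)). Krein--Rutman then produces a simple, isolated, dominant positive eigenvalue, which we name $e^{t\Lambda(p)}$, with a positive eigenfunction $\psi_p\in C(PM)$; the spectral gap gives $\|S_p^t \mathbf{1}\|_\infty^{1/t}\to e^{\Lambda(p)}$ and, more precisely, $\frac{1}{t}\log \E_{(X_0,s_0)}\|Y_t\|^p \to \Lambda(p)$ independently of $(X_0,s_0)$ because $\psi_p$ is bounded above and below.

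For \emph{(iv)}, convexity of $p\mapsto \tfrac{1}{t}\log \E_{(X,s)}\|Y_t\|^p$ follows from Hölder's inequality applied to the moments, and convexity is preserved in the $t\to\infty$ limit. Analyticity of $\Lambda$ is obtained from Kato's analytic perturbation theory: $p\mapsto L+pV_p$ is an analytic family of type (A), the principal eigenvalue $\Lambda(p)$ is isolated and simple by Krein--Rutman, and therefore depends analytically on $p$. The identities $\Lambda(0)=0$ (since $S_0^t\mathbf{1}=\mathbf{1}$) and $\Lambda'(0)=\lambda$ follow by differentiating the relation $\Lambda(p) = \lim_t \tfrac{1}{t}\log\E\|Y_t\|^p$ under the expectation at $p=0$ and using part~(i) together with the dominated convergence/uniform integrability provided by analyticity near $0$.

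Finally for \emph{(v)}, the function $\gamma(p)=\Lambda(p)/p$ for $p\neq 0$ extends analytically through $0$ by the Taylor expansion $\Lambda(p)=\lambda p + O(p^2)$ from~(iv), giving $\gamma(0)=\lambda$. Monotonicity is the standard fact that a convex function $\Lambda$ with $\Lambda(0)=0$ has nondecreasing chord slopes $\Lambda(p)/p$: for $p_1<p_2$ both positive, convexity applied to $p_1=\tfrac{p_1}{p_2}p_2+(1-\tfrac{p_1}{p_2})\cdot 0$ yields $\Lambda(p_1)/p_1\le \Lambda(p_2)/p_2$, and an analogous argument handles negative $p$ and mixed signs. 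The main technical step I expect is the analytic perturbation/spectral gap input in (iii)--(iv); the rest is either ergodic theory or elementary convex analysis.
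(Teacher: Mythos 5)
Your plan is correct and reaches all five conclusions, but the key spectral step (ii)--(iii) goes by a genuinely different route than the paper. The paper does not construct a smoothing kernel for $S^t_p$: it only verifies that $S^t_p$ is \emph{uniformly positive} on $C(PM)$ (any nonnegative, not identically zero continuous $\varphi$ is mapped to a function bounded below by a positive constant), which follows from the path controllability in Assumption \ref{ass:projStat}(b) together with compactness of $M$, and then invokes Birkhoff's extension of Jentzsch's theorem (\cite[Theorem 3]{birkhoff1957extensions}, a projective-metric contraction argument) to get the simple dominant eigenvalue $e^{t\Lambda(p)}$ with spectral gap and the eigenpair $(\psi_p,\nu_p)$. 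Your route instead uses hypoellipticity of the tilted generator to argue that $S^t_p$ is a compact integral operator with a smooth strictly positive kernel and then applies Krein--Rutman. Both work; yours requires the extra (true, but not free) input that the Feynman--Kac perturbation preserves the H\"ormander bracket condition and yields a strictly positive density, whereas the cone-contraction argument needs only irreducibility and positivity and delivers the spectral gap directly. Items (i), (iv), (v) match the standard treatment the paper defers to \cite{arnold1984formula}.

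One minor point worth tightening: for $\Lambda'(0)=\lambda$, ``differentiating under the expectation at $p=0$'' hides the exchange of the $p$-derivative with the $t\to\infty$ limit. The cleaner standard argument is Jensen's inequality, $\tfrac1t\log\E\|Y_t\|^p\ge \tfrac{p}{t}\,\E\log\|Y_t\|$, which in the limit gives $\Lambda(p)\ge p\lambda$ for all $p$ with equality at $p=0$; since you already know $\Lambda$ is differentiable at $0$, this forces $\Lambda'(0)=\lambda$ without any interchange of limits.
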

We set 
$$ \gamma_- = \lim_{p \to - \infty} \gamma(p), \quad \gamma_+ = \lim_{p \to \infty} \gamma(p).$$
Note that either $\gamma(p) \equiv \lambda$ for all $p$, or $\gamma$ is 
strictly increasing (as is typically the case) and satisfies 
$$ \gamma_- < \lambda = \gamma(0) < \gamma_+.$$
For additional comments, see Remark \ref{rmk:gammas} below.

\begin{proof}[Comments on the proof]
    Item (i) is a standard consequence of the Birkhoff ergodic theorem. Items (ii), (iii) are the majority of the work, as (iv) and (v) follow from this -- see \cite{arnold1984formula} for details. 
    
    For (ii), (iii), 
    set ${\bf 1} : PM \to \R$ to be the function identically equal to 1. According to~\eqref{eq:linear_growth}, it holds that \[ S^t_p {\bf 1}(X,s) = \mathbb{E}_{(X,s)} \| Y_t\|^p, \] 
where we recall that $s = Y_0/ \|Y_0\|\in P_x M$.
One needs to prove that $S^t_p$ admits a simple dominant eigenvalue $e^{t \Lambda(p)}$ and a spectral gap. This can be shown by using that, for $t > 0$ arbitrary, $S^t_p$ is \emph{uniformly positive}: for any $\varphi \geq 0$ continuous, $\varphi$ not identically $0$, $S^t_p \varphi \geq c > 0$ pointwise for some $c = c(\varphi)$. The uniform positivity of $S^t_p$ can be checked using Assumption \ref{ass:projStat}(b) and compactness of $M$. From here, \cite[Theorem 3]{birkhoff1957extensions} yields that $S^t_p$ admits a simple dominant eigenvalue $e^{t \Lambda(p)}$ with spectral gap, where the associated eigenpair $(\psi_p, \nu_p)$ is such that $\psi_p : PM \to \R_{> 0}$ is continuous and $\nu_p$ is a probability measure\footnote{Note that at $p = 0$, $\nu_0$ is the unique stationary measure for the $(X_t, s_t)$ process on $PM$, and $\psi_0$ is the constant function identically equal to 1.} on $PM$ with $\int \psi_p d \nu_p = 1$. \end{proof}

We now turn to large deviations principles, which can be read off of the \emph{rate function} $\Ic : \mathbb{R} \to \mathbb{R}_{\geq 0} \cup \{ \infty\}$ defined as the Legendre-Fenchel transform of the moment Lyapunov exponents
\begin{align} \label{eq:legendre-fenichel2} \Ic(r) = \sup_{p \in \mathbb{R}} \left(r p - \Lambda(p) \right) \, . \end{align}
The following basic properties are straightforward from Theorem \ref{thm:mlp} (iv), (v) and standard facts about Legendre transforms: if $\gamma_- < \gamma_+$, 
\begin{enumerate}
\item $\Ic(r)$ is finite if $r \in (\gamma_-, \gamma_+)$ and infinite if $ r \notin [\gamma_-, \gamma_+]$,
\item $\Ic$ is strictly convex, analytic, and nonnegative on $(\gamma_-, \gamma_+)$,
\item $\Ic(r) = 0$ iff $r=\lambda$, and $\Ic'(\lambda) = 0$. In particular, $\Ic(r)$ has its strict global minimum of $0$ at $r = \lambda$. 
\item $\Ic$ is strictly decreasing on $(\gamma_-, \lambda)$ and strictly increasing on $(\lambda, \gamma_+)$.
\end{enumerate}

With this in place, we are now ready to state the large deviations principle. 
\begin{thm} Suppose that $M$ is compact, the SDE \eqref{eq:SDE_gen} on $M$ satisfies Assumption \ref{ass:projStat}, and lastly that $\gamma_- < \gamma_+$ as in Theorem \ref{thm:mlp}.
\label{thm:LDPs}
\begin{enumerate}[label=(\roman*)]

\item 
We have the following: 
\begin{align*}
\limsup_{t \to \infty} \sup_{(X_0, s_0) \in PM} \frac{1}{t} \log \mathbb{P}_{(X_0, s_0)} \left( r_- \leq \lambda_t  \leq r_+ \right) &\leq - \inf_{\tilde r \in [r_-, r_+]}  \Ic(\tilde r), \\
\liminf_{t \to \infty} \inf_{(X_0, s_0) \in PM} \frac{1}{t} \log \mathbb{P}_{(X_0, s_0)} \left( r_- < \lambda_t  < r_+ \right) &\geq - \inf_{ \tilde r \in (r_-, r_+)}  \Ic(\tilde r) .
\end{align*} 

\item For every  $(X_0, s_0) \in PM$ and  $\varepsilon>0$ it holds that
\[ \lim\limits_{t\to\infty} \frac{1}{t} \log \P_{(X_0, s_0)}(|\lambda_t-\lambda|\geq \varepsilon ) =c(\varepsilon), \]
where $c(\varepsilon)=-\min \{\Ic(\lambda+\varepsilon), \Ic(\lambda-\varepsilon) 
\}<0$.
\end{enumerate}
\end{thm}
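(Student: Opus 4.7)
The plan is to deploy a G\"artner-Ellis type large deviations principle with $\Lambda(p)$ playing the role of the limiting cumulant generating function of $\lambda_t$. By~\eqref{eq:linear_growth}, $\E_{(X_0,s_0)}e^{tp\lambda_t} = S^t_p \mathbf{1}(X_0,s_0)$, and Theorem~\ref{thm:mlp}(iii) provides a spectral gap for $S^t_p$ on $C(PM)$ with dominant eigenpair $(\psi_p,\nu_p)$, $\psi_p$ strictly positive and continuous on the compact space $PM$. This yields two-sided bounds $c_p e^{t\Lambda(p)} \leq S^t_p \mathbf{1}(X_0,s_0) \leq C_p e^{t\Lambda(p)}$ that are \emph{uniform} in $(X_0,s_0) \in PM$, and since $\Lambda$ is analytic and finite on all of $\R$ by Theorem~\ref{thm:mlp}(iv), the essential smoothness hypothesis of G\"artner-Ellis is automatic.

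For the upper bound in (i), a Chebyshev argument applied to the tilted semigroup gives, for $p \geq 0$ and $r \geq \lambda$,
\[
\P_{(X_0,s_0)}(\lambda_t \geq r) \leq e^{-tpr} S^t_p \mathbf{1}(X_0,s_0) \leq C_p e^{-t(pr - \Lambda(p))},
\]
uniformly in $(X_0,s_0) \in PM$. Optimizing over $p \geq 0$ produces $\limsup \frac{1}{t}\log\P_{(X_0,s_0)}(\lambda_t \geq r) \leq -\Ic(r)$; the symmetric argument with $p \leq 0$ handles the left tail. Partitioning $[r_-,r_+]$ at $\lambda$, combined with $\P(\lambda_t \in [r_-,r_+]) \leq \P(\lambda_t \geq \lambda) + \P(\lambda_t \leq \lambda)$ applied piecewise (or a single tilt in the easy case $\lambda \notin [r_-,r_+]$), yields the upper bound with rate $\inf_{\tilde r \in [r_-,r_+]}\Ic(\tilde r)$.

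The matching lower bound in (i) is the main technical obstacle. The standard route is an exponential change of measure via the eigenpair: the normalized process $\Phi^p_t := \frac{\psi_p(X_t,s_t)}{\psi_p(X_0,s_0)}\|Y_t\|^p e^{-t\Lambda(p)}$ is, by the eigenrelation $S^t_p\psi_p = e^{t\Lambda(p)}\psi_p$, a positive mean-one martingale, and the associated tilted law $\mathbb{Q}^p$ corresponds to a nondegenerate diffusion on $PM$ (with drift corrected by $\nabla\log\psi_p$ through Girsanov) that still satisfies an analogue of Assumption~\ref{ass:projStat} and hence is uniquely ergodic. Under $\mathbb{Q}^p$, Birkhoff's theorem applied to the tilted invariant measure gives $\lambda_t \to \Lambda'(p)$ almost surely. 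Reversing the tilt,
\[
\P_{(X_0,s_0)}(|\lambda_t - r| < \delta) \geq c_p\, e^{-t(pr + |p|\delta - \Lambda(p))}\mathbb{Q}^p(|\lambda_t - r|<\delta),
\]
and choosing $p$ with $\Lambda'(p) = r$, then sending $t\to\infty$ followed by $\delta\to 0$, gives $\liminf \geq -\Ic(r)$ for every $r \in (\gamma_-,\gamma_+)$; a standard covering of $(r_-,r_+)$ by such intervals concludes (i). The delicate points are verifying that $\Phi^p_t$ is a genuine (rather than merely local) $L^1$ martingale and that the tilted diffusion inherits enough positivity and irreducibility from Assumption~\ref{ass:projStat} to make the Birkhoff step apply uniformly in the starting point.

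Part (ii) is then a direct corollary of (i). Applying the upper bound with $(r_-,r_+)$ taken to be $(-\infty,\lambda-\varepsilon]$ and $[\lambda+\varepsilon,+\infty)$ respectively (both controlled by the Chebyshev estimate above, which holds for unbounded intervals as well), together with the lower bound on slightly smaller open intervals around any point outside $[\lambda-\varepsilon,\lambda+\varepsilon]$ that achieves the corresponding infimum of $\Ic$, produces matching exponential rates $\Ic(\lambda-\varepsilon)$ and $\Ic(\lambda+\varepsilon)$ for the two one-sided events. The log-sum rule yields $c(\varepsilon) = -\min\{\Ic(\lambda-\varepsilon),\Ic(\lambda+\varepsilon)\}$, and strict convexity of $\Ic$ with unique zero at $\lambda$ (properties (2), (3) listed before the theorem) forces $\Ic(\lambda\pm\varepsilon)>0$, hence $c(\varepsilon) < 0$.
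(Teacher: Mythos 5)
Your argument is correct, but it takes a different route from the paper for the harder half of the statement. The paper's proof is short: it verifies the hypotheses of a uniform-in-initial-condition G\"artner--Ellis theorem (Theorem \ref{thm:ge} in the appendix, quoted from Dembo--Zeitouni) --- namely, uniform convergence of $\frac1t\log\E_{(X_0,s_0)}e^{tp\lambda_t}=\frac1t\log S^t_p\mathbf 1(X_0,s_0)$ to $\Lambda(p)$, which follows from the spectral gap in Theorem \ref{thm:mlp}(iii), plus differentiability of $\Lambda$ from (iv) --- and then reads off (i) and (ii) as you do in your last paragraph. Your upper bound is the same Chernoff bound that sits inside G\"artner--Ellis, but your lower bound is genuinely different: instead of tilting the one-dimensional law of $\lambda_t$ as in the abstract G\"artner--Ellis proof, you perform a Doob $h$-transform of the path measure via the eigenpair $(\psi_p,\nu_p)$ and invoke ergodicity of the tilted diffusion. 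This is more work but more structural: it yields the identification $\Lambda'(p)=\int Q\,d\tilde\mu_p$ and makes the uniformity over $(X_0,s_0)$ transparent, whereas the paper simply asserts that the uniform extension of G\"artner--Ellis is ``straightforward and omitted.'' The two ``delicate points'' you flag are both fillable by exactly the ingredients the paper already supplies: $\psi_p$ is continuous and strictly positive on the compact space $PM$, hence bounded above and below, so $\Phi^p_t$ is a true mean-one martingale (the martingale property is immediate from the Markov property and $S^s_p\psi_p=e^{s\Lambda(p)}\psi_p$); and the uniform positivity of $S^t_p$ used in the proof of Theorem \ref{thm:mlp} passes to the conjugated semigroup $e^{-t\Lambda(p)}\psi_p^{-1}S^t_p(\psi_p\,\cdot)$, giving a Doeblin condition and hence the uniform law of large numbers you need under $\mathbb Q^p$. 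One cosmetic point: the inequality $\P(\lambda_t\in[r_-,r_+])\le\P(\lambda_t\ge\lambda)+\P(\lambda_t\le\lambda)$ as written is vacuous; what you want (and clearly intend) is the case split on whether $\lambda\in[r_-,r_+]$, using monotonicity of $\Ic$ on either side of $\lambda$ to identify $\inf_{[r_-,r_+]}\Ic$ with $\Ic(r_-)$ or $\Ic(r_+)$ when $\lambda$ lies outside the interval.
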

Theorem \ref{thm:LDPs} is a consequence of the G\"artner-Ellis Theorem (Theorem \ref{thm:ge}), which roughly-speaking implies a large deviations principle assuming convergence of the limit \eqref{eq:defnMLE2} to a differentiable function -- see Appendix~\ref{ldp} for further discussion. 

Below we record a mild strengthening of Assumption \ref{ass:projStat} allowing to ignore the distinction between open and closed intervals in Theorem \ref{thm:LDPs}. 
We refer below to the following condition on the process $(X_t, Y_t)$. 

\begin{ass}\label{ass:nondegenFTLE2}
    For any $t > 0$ fixed and $(X_0, s_0) \in PM$, it holds that 
    \[\P_{(X_0, s_0)} ( \lambda_t = c) = 0 \]
    for all $c \in \R$. 
\end{ass}
Assumption \ref{ass:nondegenFTLE2} holds, e.g., if the variational process $(X_t, Y_t)$ itself satisfies the parabolic H\"ormander condition (c.f. Assumption \ref{ass:statDensity}(a)). For further discussion of Assumption \ref{ass:nondegenFTLE2}, see Section \ref{subsubsec:conditionG}. 

The following is immediate.

\begin{cor}
\label{cor:LDP}
    In addition to the setting of Theorem \ref{thm:LDPs}, let Assumption \ref{ass:nondegenFTLE2} hold. Then, one has that 
    \[\lim_{t \to \infty} \frac1t \log \mathbb{P}_{(X_0, s_0)}( r_- < \lambda_t < r_+) = -\inf_{\tilde r \in [r_-, r_+]} \Ic(\tilde r) \,, \] 
    where the limit is uniform over $(X_0, s_0) \in PM$. 
    In particular, for $r > \lambda$ and for all $(X_0, s_0) \in PM$, it holds that 
    \begin{align}\label{eq:simplerLDPFormula}
        \Ic(r) = -\lim_{t \to \infty} \frac1t \log \mathbb{P}_{(X_0, s_0)} (\lambda_t > r) \,, 
    \end{align}
\end{cor}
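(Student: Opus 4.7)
The plan is to derive Corollary \ref{cor:LDP} directly from Theorem \ref{thm:LDPs}(i), using Assumption \ref{ass:nondegenFTLE2} to close the gap between the open and closed intervals appearing in the upper and lower LDP bounds.

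First, Assumption \ref{ass:nondegenFTLE2} gives $\P_{(X_0,s_0)}(\lambda_t = c) = 0$ for every $c \in \R$, $t > 0$ and $(X_0, s_0) \in PM$. Consequently, the events $\{r_- \le \lambda_t \le r_+\}$ and $\{r_- < \lambda_t < r_+\}$ have identical probabilities. Substituting this into the upper and lower bounds of Theorem \ref{thm:LDPs}(i) sandwiches $\frac{1}{t}\log \P_{(X_0,s_0)}(r_- < \lambda_t < r_+)$, uniformly over $(X_0, s_0)\in PM$, between $-\inf_{(r_-, r_+)} \Ic$ and $-\inf_{[r_-, r_+]} \Ic$.

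Second, I would verify that these two infima coincide. The inequality $\inf_{[r_-, r_+]} \Ic \le \inf_{(r_-, r_+)} \Ic$ is trivial. For the reverse, recall from Theorem \ref{thm:mlp}(iv)--(v) and the properties of $\Ic$ listed after \eqref{eq:legendre-fenichel2} that $\Ic$ is continuous (in fact analytic) on $(\gamma_-, \gamma_+)$ and identically $+\infty$ on $\R \setminus [\gamma_-, \gamma_+]$, and as a Legendre transform it is globally lower semicontinuous. Whether the endpoints $r_\pm$ lie inside $(\gamma_-, \gamma_+)$, at $\gamma_\pm$, or beyond, the value of $\Ic$ at $r_\pm$ (finite or $+\infty$) is approached by interior sequences, so the two infima agree and the first displayed limit of the corollary follows.

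Finally, for the half-line formula \eqref{eq:simplerLDPFormula} with $r > \lambda$, I would fix any $R > \gamma_+$ and decompose
\[
\P_{(X_0, s_0)}(\lambda_t > r) = \P_{(X_0, s_0)}(r < \lambda_t < R) + \P_{(X_0, s_0)}(\lambda_t \ge R).
\]
By the first part of the corollary together with the strict monotonicity of $\Ic$ on $(\lambda, \gamma_+)$, the first probability decays exponentially at rate $\inf_{[r, R]} \Ic = \Ic(r)$. For the second, since $\Ic \equiv +\infty$ on $[R, \infty)$, applying the upper bound of Theorem \ref{thm:LDPs}(i) to the intervals $[R, R+n]$ and letting $n \to \infty$ produces superexponential decay, which is absorbed into the dominant $e^{-t\Ic(r)}$ term, yielding \eqref{eq:simplerLDPFormula}. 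The main obstacle is the infima-matching argument in the second step, requiring careful use of continuity and lower semicontinuity of $\Ic$ near the boundary of its effective domain; the rest is routine bookkeeping on bounds already established.
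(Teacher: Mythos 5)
Your first two steps are exactly the deduction the paper has in mind (it offers no proof beyond ``the following is immediate''): Assumption \ref{ass:nondegenFTLE2} equates the probabilities of the open and closed intervals, and then one matches $\inf_{(r_-,r_+)}\Ic$ with $\inf_{[r_-,r_+]}\Ic$ using convexity, continuity on $(\gamma_-,\gamma_+)$ and lower semicontinuity. That part is fine, modulo one corner case you assert but do not actually cover: if an endpoint equals $\gamma_\pm$ with $\Ic(\gamma_\pm)<\infty$ while $\Ic\equiv+\infty$ on the adjacent side of the interval (which the Legendre transform does not forbid), then $\Ic(r_\pm)$ is \emph{not} the limit of $\Ic$ along interior sequences, and the two infima genuinely differ. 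This is a degenerate configuration the paper also ignores, so I only flag it.

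The genuine gap is in your treatment of \eqref{eq:simplerLDPFormula}. First, ``fix any $R>\gamma_+$'' is impossible in the typical case $\gamma_+=+\infty$ (see Remark \ref{rmk:gammas}(c)), so the decomposition is unavailable precisely when it is most needed. Second, even when $\gamma_+<\infty$, bounding $\P(\lambda_t\ge R)$ by ``applying the upper bound to $[R,R+n]$ and letting $n\to\infty$'' does not work: the upper bound of Theorem \ref{thm:LDPs}(i) is a $\limsup_{t\to\infty}$ statement for each fixed $n$, and you cannot interchange the $n\to\infty$ and $t\to\infty$ limits without a uniformity (exponential tightness) that you have not established. The repair is short and avoids the decomposition altogether: apply the G\"artner--Ellis bounds of Theorem \ref{thm:ge} (stated in the appendix for arbitrary measurable $K$) to $K=(r,\infty)$, whose interior is $(r,\infty)$ and whose closure is $[r,\infty)$; equivalently, the upper bound is the Chernoff estimate $\P(\lambda_t\ge r)\le \exp\bigl(-t\,(pr-\tfrac1t\Lambda^\beta_t(pt))\bigr)$ optimized over $p>0$. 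One then only needs $\inf_{[r,\infty)}\Ic=\inf_{(r,\infty)}\Ic=\Ic(r)$, which follows for $r\in(\lambda,\gamma_+)$ from the strict monotonicity and continuity of $\Ic$ on $(\lambda,\gamma_+)$ together with $\Ic\ge\lim_{x\uparrow\gamma_+}\Ic(x)$ on $[\gamma_+,\infty)$. Note also that this half-line identity does not actually use Assumption \ref{ass:nondegenFTLE2}.
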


\begin{rmk}\label{rmk:gammas} 
    \begin{itemize}
        \item[(a)] We note that $\Lambda''(0)$ has been shown to coincide with the \emph{asymptotic variance} appearing in the central limit theorem governing convergence in distribution of $\sqrt{t} (\lambda_t -  \lambda)$. For further details, see \cite{ArnoldKliemann1987}. 
        
        \item[(b)] The scenario $\gamma_- = \gamma_+$ implies immediately that $\Lambda(p) \equiv  p \lambda$ \cite{ArnoldKliemann1987}. This behavior is quite degenerate and straightforward to rule out in many practical cases. For instance, our computer-assisted enclosures of $p \mapsto \Lambda(p)$ preclude linear dependence on $p$, implying $\gamma_- < \gamma_+$ immediately.


        \item[(c)] It is also typically the case that $\gamma_\pm = \pm \infty$, and indeed that $\Lambda(p)$ scales like $p^2$ as $|p| \to \infty$. For further discussion, see Section \ref{subsubsec:infIsom3}. 
        

        \item[(d)] 
        One may expect that $\Lambda(p)$ can be obtained as the principle eigenvalue of the infinitesimal generator $L_p$, which in our setting should take the form 
        \begin{equation}
            \label{eq:tilted_generator_general_prelim}
            L_p = L + p \sum_{j=1}^m q_j h_j + p Q + \frac{p^2}{2} \sum_{j=1}^m q_j^2 \,. 
        \end{equation}
        Doing so will form the basis of our computer-assisted enclosures of $\Lambda(p)$ presented in Sections \ref{sec:test_problems} and \ref{sec:CAP}, to which we refer the reader for further details on this point. 
        

    \end{itemize}

\end{rmk}




\subsection{The noncompact case} \label{subsec:noncompact2}




Here, we specialize to the case 
\begin{gather*} M = \R^d \, , \qquad \text{ and } \\ 
\text{the vector fields $f_i, i\geq 1,$ are constant vectors in $\mathbb{R}^d$} \, , 
\end{gather*} 
and will refer here and below to the $d \times m$ matrix $\sigma = (f_1 f_2 \dots f_m)$ with columns $f_j, j \geq 1$. Note that in this case, we identify $PM$ with the Cartesian product $\R^d \times P^{d-1}$. 


In this setting, further assumptions will be required to ensure that the equations defining $(X_t)$ and $(X_t, Y_t)$ admit global-in-time strong solutions and stationary measures. Both issues are addressed by the following assumption. 


\begin{ass}\label{ass:drift}
    There is a smooth function $W : \R^d \to [1,\infty)$ with compact sublevel sets such that $G$ defined in~\eqref{eq:G} satisfies the \emph{drift condition} 
    \begin{align}\label{eq:driftCond}
        G W \leq - \alpha W + \beta ,
    \end{align}
    for some constants $\alpha, \beta > 0$. 
\end{ass}
It is straightforward to check that Assumption \ref{ass:drift} ensures unique existence of strong solutions to \eqref{eq:SDE_gen} for $(X_t)$ and \eqref{eq:var_process} for $(Y_t)$, hence also \eqref{eq:projective_process} for $(s_t)$. 
Regarding stationary measures: it is standard that Assumptions \ref{ass:statDensity} and \ref{ass:drift} imply unique existence of a stationary measure $\nu$ for $(X_t)$ on $\R^d$ with smooth, everywhere-positive density. 
Since the $P^{d-1}$ component is compact, Assumptions \ref{ass:projStat} and \ref{ass:drift} similarly imply unique existence of a stationary measure $\mu$ for $(X_t, s_t)$ on $\R^d \times P^{d-1}$ with smooth, everywhere-positive density. For further details, see, e.g., \cite{hairer2011yet}.

\parahead{Large deviations}

    As described in Section \ref{subsec:compact2}, to prove a large deviations estimate for finite-time Lyapunov exponents it is enough to establish a spectral theory as in Theorem \ref{thm:mlp} for the tilted semigroup $(S^t_p)$, $p \in \R$, see \eqref{eq:tilted_semigroup}. 
    The main challenge is that $S^t_p$ is not a bounded operator on the space of bounded continuous functions, since frequently in applications the exponent in \eqref{eq:linear_growth} is unbounded at infinity.
    
    Instead, an appropriate functional framework is to treat $S^t_p$ as an operator on $C_W = C_W(PM)$, the space of continuous functions with finite weighted norm 
    \[\| \varphi\|_{C_W} = \sup_{(X, s) \in PM} \frac{|\varphi(X, s)|}{W(X)},  \]
    for some appropriate function $W$  to be chosen shortly. 
    A natural choice would be the function $W$ appearing in Assumption \ref{ass:drift}; however, the drift condition \eqref{eq:driftCond} is not enough to ensure $S^t_p$ is bounded on $C_W$: some stronger condition is required. 
    This problem has been treated in some special cases  (e.g., \cite{bedrossian2022almost}); general frameworks addressing this include \cite{FStoltz20, kontoyiannis2005large}.

The following framework is a version of the results of \cite{FStoltz20} adapted to observables of the projective process $(X_t, s_t)$ on $\R^d \times P^{d-1}$.

\smallskip 

\noindent {\bf Notation.} Here and below, for nonnegative functions $f, g$ on $\mathbb{R}^d$  with $f > 0$, we write $g \ll f$ if $f / g$ has compact sublevel sets. Note that if $g$ is continuous, then $g \ll f$ implies that for any $a > 0$ there exists $R > 0$ such that 
\[g \leq a f + R \,. \]

 Moreover, for real-valued (and possibly sign-indefinite) functions $f, g$, we will write $f \sim g$ if there exist constants 
$c, c', R, R' > 0$ such that 
\[c' g - R' \leq f \leq c g + R \qquad \text{pointwise on $\mathbb{R}^d$.}\]
 
\begin{ass}\label{ass:noncompactLDP2} \ 
    \begin{itemize}
        \item[(1)] There exists a function $V$ belonging to the Schwartz class\footnote{Recall that $f$ is of Schwartz class if it is smooth and it and its partial derivatives have strictly polynomial growth at infinity. 
        } such that:
        \begin{itemize}
            \item[(a)] $V$ has compact sublevel sets;
            \item[(b)] $|\sigma^T \nabla V|$ has compact sublevel sets; and
            \item[(c)] for all $\eta \in (0,1)$, it holds that
           \[G V + \frac{\eta}{2} |\sigma^T \nabla V|^2 \sim - |\sigma^T \nabla V|^2 \,. \] 
        \end{itemize}
        \item[(2)]  There exists $\theta \in (0,1)$ such that the function $x \mapsto \| \rmD f_0 (x) \|$ (where the norm refers to the operator norm for $\rmD f_{0}(x): \mathbb{R}^{d} \rightarrow \mathbb{R}^{d}$) 
        satisfies the bounds 
        \begin{gather}
                \label{eq:boundingDf02} \| \rmD f_0 \|_{C_W} < \infty \, ,  \\ 
                \label{eq:strongerBoundingDf02} \| \rmD f_0\| \ll | \sigma^T \nabla V|^2 \,, 
        \end{gather}
        where 
        \[W = e^{\theta V} \, . \] 
    \end{itemize}
\end{ass}





\begin{thm}\label{thm:noncompactLDP}
    In the setting of this section (additive noise, $M = \R^d$) let Assumptions \ref{ass:projStat} and \ref{ass:noncompactLDP2} hold. Then: 
    \begin{itemize}
    \item[(a)] 
    \begin{itemize}
        \item[(i)] The limit \eqref{eq:asymptoticLE2} definining the asymptotic Lyapunov exponent $\lambda$ exists with probability 1 and is constant over $(X_0, s_0)$. Moreover, the Furstenberg-Khasminskii formula \eqref{eq:FK_general} holds (in particular, $Q \in L^1(\mu)$ where $ Q(x, s) = \langle x, \rmD f_0(x) s \rangle$. 
        \item[(ii)] For all $p \in \mathbb{R}$, the limit \eqref{eq:convMLE2} exists with probability 1 and is independent of $(X_0, s_0)$. 
        \item[(iii)] For all $t > 0$ and $p \in \mathbb{R}$, the operators $S^t_p$ are bounded in the $C_W$-norm,  where $W$ is as in Assumption \ref{ass:noncompactLDP2}(2). Moreover, for all $t > 0$ the operator $S^t_p : C_W \to C_W$ admits a simple, isolated, dominant eigenvalue $e^{t \Lambda(p)}$. 
        \item[(iv)] $\Lambda : \R \to \R$ is convex, analytic, and satisfies $\Lambda(0) = 0, \Lambda'(0) = \lambda$. 
        \item[(v)] The function $\gamma$ as in \eqref{eq:defineGamma} is nondecreasing and analytic. 
    \end{itemize}
    \item[(b)] If $\gamma_- < \gamma_+$, then the conclusions of Theorem \ref{thm:LDPs} hold\footnote{To be precise: the convergences listed in (i) and (ii) of Theorem \ref{thm:LDPs} are uniform over compact subsets of $PM$.} with $\Ic$ the Legendre-Fenchel transform of $\Lambda$ as in \eqref{eq:legendre-fenichel2}.
    \end{itemize}
\end{thm}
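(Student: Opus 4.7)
The plan is to follow the framework of \cite{FStoltz20}, adapting it to observables on the projective bundle $PM = \R^d \times P^{d-1}$. Since $P^{d-1}$ is compact, the non-compactness lives only in the base $\R^d$, and the weight $W(X,s) = e^{\theta V(X)}$ depends only on $X$. The core analytical task is item (a)(iii): to show that $S^t_p$ acts boundedly on $C_W$ and admits an isolated, simple dominant eigenvalue with spectral gap. Once this is done, items (ii), (iv), (v) of (a) follow from standard arguments --- convergence of $\frac{1}{t}\log S^t_p \mathbf{1}$ to $\Lambda(p)$ via the spectral gap, analyticity of the dominant eigenvalue via Kato perturbation theory for the analytic family $p \mapsto L_p$ in \eqref{eq:tilted_generator_general_prelim}, and convexity of $\Lambda$ by H\"older's inequality. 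The Furstenberg--Khasminskii identity in (a)(i) then comes from differentiating $\Lambda$ at $p=0$, using that the eigenmeasure $\nu_0$ coincides with the stationary measure $\mu$ of Assumption \ref{ass:projStat}.

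The first concrete step is to establish a Lyapunov/drift inequality for the tilted generator: for each fixed $p \in \R$, find constants $c_p, R_p > 0$ with
\begin{equation*}
L_p W \leq - c_p W + R_p \quad \text{pointwise on } PM.
\end{equation*}
A direct computation gives $L_p W / W = \theta\, GV + \tfrac{\theta^2}{2} |\sigma^T \nabla V|^2 + \theta p \sum_j q_j \bigl(f_j\cdot\nabla V\bigr) + pQ + \tfrac{p^2}{2}\sum_j q_j^2$ plus lower-order projective derivatives. Assumption \ref{ass:noncompactLDP2}(2), together with the definitions of $Q$ and $q_j$ in \eqref{eq:defnQ2}, bound $|Q(X,s)|$ and $|q_j(X,s)|$ pointwise by $\|\rmD f_0(X)\|$; then \eqref{eq:strongerBoundingDf02} ensures all the $p$-dependent terms are dominated by any prescribed fraction of $|\sigma^T \nabla V|^2$ outside a sufficiently large compact set. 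Combined with Assumption \ref{ass:noncompactLDP2}(1)(c) applied with $\eta$ chosen so that $\theta + \eta < 1$, this yields the drift. Standard Dynkin/Gr\"onwall arguments then give boundedness of $S^t_p$ on $C_W$ and the quantitative bound $S^t_p W \leq C_p e^{-c_p t} W + C_p'$.

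The second step is to upgrade drift plus irreducibility into a spectral gap. The minorization needed --- a small-set condition for the tilted kernel --- follows from Assumption \ref{ass:projStat}(b) exactly as in the proof of Theorem \ref{thm:mlp}, since on any compact sublevel set of $V$ the tilted semigroup is uniformly positive. With drift and minorization on a cone of the form $\{\varphi : c_1 W \leq \varphi \leq c_2 W\}$, I would then apply the Birkhoff--Hilbert projective-metric contraction argument of \cite{birkhoff1957extensions} (equivalently, a Harris-type theorem for positive, non-Markov kernels in the spirit of \cite{kontoyiannis2005large}). This produces the simple dominant eigenpair $(\psi_p, \nu_p)$ with $\psi_p$ comparable to $W$, a strict spectral gap in $C_W$, and uniform-on-compacts convergence $\frac{1}{t}\log S^t_p \mathbf{1}(X,s) \to \Lambda(p)$. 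Analyticity in $p$ then follows from Kato's analytic perturbation theorem, since $p \mapsto L_p$ is polynomial in $p$ and the dominant eigenvalue is isolated and simple.

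Finally, for part (b), having $\Lambda(p)$ defined and analytic (hence differentiable) for all $p \in \R$, I invoke the G\"artner--Ellis theorem (Theorem \ref{thm:ge}) to deduce the same upper and lower LDP bounds as in Theorem \ref{thm:LDPs}, with the caveat that uniformity in $(X_0, s_0)$ holds only over compact subsets of $PM$ (since $\psi_p$ is comparable to $W$, hence bounded below on compacts but not globally). The main obstacle is the drift estimate: carefully balancing $\theta$ from Assumption \ref{ass:noncompactLDP2}(2) with $\eta$ from Assumption \ref{ass:noncompactLDP2}(1)(c), and using \eqref{eq:strongerBoundingDf02} to absorb the lower-order $p$-dependent terms into $|\sigma^T \nabla V|^2$, is the technical heart of the argument and is where the precise form of Assumption \ref{ass:noncompactLDP2} is really used. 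A secondary obstacle is obtaining the drift and spectral-gap constants with enough control in $p$ so that analytic perturbation can be applied uniformly on bounded intervals; this is addressed by noting that the constants $c_p, R_p$ can be chosen locally uniformly in $p$.
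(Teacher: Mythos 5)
Your architecture matches the paper's: a drift inequality for the tilted dynamics on $C_W$ with $W=e^{\theta V}$, upgraded by irreducibility to a simple dominant eigenvalue with spectral gap, followed by the G\"artner--Ellis theorem for part (b). The main structural difference is that the paper does not re-derive this machinery: it imports the spectral theory wholesale from \cite{FStoltz20} (Theorem~\ref{thm:specTheoryAPP}), and only fills in the three genuinely new points --- the harmless extension from $\R^d$ to $\R^d\times P^{d-1}$ (the compact fiber costs nothing, exactly as you observe), the passage from the space $B^\infty_W$ of merely measurable functions used in \cite{FStoltz20} to $C_W$ (which requires a separate continuity lemma for $(x,s)\mapsto S^t_p\varphi(x,s)$, a point your cone argument silently assumes), and the analyticity of $p\mapsto\Lambda(p)$. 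Your re-derivation via drift plus minorization plus Birkhoff cones is a legitimate alternative and buys self-containedness; note only that in the present setting the $f_j$, $j\ge 1$, are constant, so $q_j\equiv 0$ and all the $p$-dependent terms in your drift computation except $pQ$ vanish identically (also, $h_j W=0$ since $W$ depends only on $X$, so the cross term you wrote does not actually appear).

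The one place where your route is materially weaker than the paper's is the analyticity step. You invoke Kato perturbation for the generator family $p\mapsto L_p$, but $L_p-L_0$ contains the multiplication operator $pQ$ with $Q$ unbounded, so establishing that $\{L_p\}$ is an analytic family (e.g.\ of type (A)) on $C_W$ requires a relative-boundedness estimate that is not obvious and that you do not supply; one would also need a spectral mapping statement to transfer the isolated eigenvalue between $L_p$ and $S^t_p$. The paper avoids this entirely by proving directly that the \emph{bounded} operator family $p\mapsto S^t_p$ is complex differentiable in the operator norm on $C_W$, via an explicit second-order Taylor remainder estimate that combines $|Q|\ll|\sigma^T\nabla V|^2$ with the supermartingale property of $W(X_t)e^{-\int_0^t (GW/W)(X_\tau)\,\rmd\tau}$; analytic perturbation of the isolated simple eigenvalue of $S^t_p$ then gives analyticity of $e^{t\Lambda(p)}$, hence of $\Lambda$. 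If you want to keep your generator-level approach you must close this gap; otherwise the semigroup-level argument is the safer path.
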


\begin{rmk}
    It is straightforward to check that Assumption \ref{ass:noncompactLDP2} (1) implies Assumption \ref{ass:drift} for $W = e^{\eta V}$ for any $\eta \in (0,1)$,  on computing \[\frac{GW}{W} = \eta \left( G V + \frac{\eta}{2} |\sigma^T \nabla V|^2 \right) \sim - \eta |\sigma^T \nabla V|^2 \,. \]  The bound \eqref{eq:boundingDf02} ensures that $Q$ is integrable with respect to $\mu$, while 
    \eqref{eq:strongerBoundingDf02} implies a sufficient level of control on $Q(X_t, s_t)$ so that $S^t_p : C_W \to C_W$ is bounded  for the particular choice of $\eta = \theta$, following results in \cite{FStoltz20}. See  Appendix \ref{ldp} for more details. 
\end{rmk}

\begin{rmk}
    While ellipticity of the generators $L, G$ is not explicitly assumed, assumption \ref{ass:noncompactLDP2}(1)(b) is strong and essentially forces the noise vectors $f_j$ to span $\R^d$. 
    The authors are unaware of a more general approach, even in this additive setting, that would allow for truly hypoelliptic noise. We refer the reader to Section \ref{sec:outlook} for further discussion on open questions in this area. 
\end{rmk}




\subsubsection*{Special case: gradient systems}

Below we consider the special case of gradient systems
\begin{align} \label{eq:genGradSystem2}\begin{aligned}
        \rmd X_t & = - \nabla V(X_t) \rmd t + \sigma \sum_{1}^d e_j \rmd W_t^j \, , \\ 
        \rmd Y_t & = - \rmD^2 V(X_t) Y_t \rmd t 
\end{aligned}
\end{align}
where the $(e_j)_{j = 1}^d$ form the standard basis for $\R^d$, and where the potential function $V : \mathbb{R}^d \to \mathbb{R}$ is smooth and satisfies the following: 

\begin{ass}\label{ass:gradient}  Assumption \ref{ass:projStat} holds, and additionally:
    \begin{itemize}
        \item[(a)] The function $V$ is of Schwartz class and has compact sublevel sets $\{ V \leq C\}$, $C \in \mathbb{R}$. 
        \item[(b)] It holds that 
        \begin{align}
            |\Delta V| \ll |\nabla V|^2 \qquad \text{ and } \qquad 
            \| \rmD^2 V \| \ll |\nabla V|^2 \,. 
        \end{align}
    \end{itemize}
\end{ass}

\begin{cor} \label{cor:gradient}
Consider the system \eqref{eq:genGradSystem2} and let Assumption \ref{ass:gradient} hold. Then: 
    \begin{itemize}
        \item[(a)] Equation \eqref{eq:genGradSystem2} for $(X_t, Y_t)$ admits unique strong solutions. The process $(X_t)$ admits the unique stationary density \[\rho = \frac{1}{Z} \exp \bigl(-2 V(x) / \sigma^2\bigr)\] and the corresponding $(X_t, s_t)$ process on $\mathbb{R}^d \times P^{d-1}$ admits a unique stationary measure $\mu$ with smooth, positive density. 
        \item[(b)] Assumption \ref{ass:noncompactLDP2} is satisfied for $V$ and with any value $\theta \in (0,1)$. In particular, if $\gamma_- < \gamma_+$ then the conclusions of Theorem \ref{thm:LDPs} hold with $\Ic$ the Legendre-Fenchel transform of $\Lambda$. 
    \end{itemize}

\end{cor}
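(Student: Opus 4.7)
The plan is to reduce part (b) to an application of Theorem \ref{thm:noncompactLDP} by verifying Assumption \ref{ass:noncompactLDP2} under Assumption \ref{ass:gradient}; once that is done, the drift condition of Assumption \ref{ass:drift} (and hence most of part (a)) follows essentially for free via the remark immediately after Theorem \ref{thm:noncompactLDP}. For Assumption \ref{ass:noncompactLDP2}(1) I would take as Lyapunov function the rescaling $\tilde V := 2V/\sigma^2$ (twice the exponent in the stationary density), which makes the coefficient balance in item (c) work out irrespective of the size of $\sigma$. Item (a) is inherited from $V$; for item (c), a direct computation using $f_0 = -\nabla V$ and generator $G = -\nabla V \cdot \nabla + \frac{\sigma^2}{2}\Delta$ yields
\[
G \tilde V + \frac{\eta}{2}|\sigma^T \nabla \tilde V|^2 = -\frac{2(1-\eta)}{\sigma^2}|\nabla V|^2 + \Delta V,
\]
which for any $\eta \in (0,1)$ is $\sim -|\sigma^T \nabla \tilde V|^2 = -(4/\sigma^2)|\nabla V|^2$ since $|\Delta V| \ll |\nabla V|^2$ by Assumption \ref{ass:gradient}(b).

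For Assumption \ref{ass:noncompactLDP2}(2), $\rmD f_0 = -\rmD^2 V$, so the condition $\|\rmD f_0\| \ll |\sigma^T \nabla V|^2$ is exactly the second half of Assumption \ref{ass:gradient}(b), and the weighted bound $\|\rmD f_0\|_{C_W} < \infty$ with $W = e^{\theta \tilde V}$, for any $\theta \in (0,1)$, follows because $\rmD^2 V$ has at most polynomial growth (Schwartz class) while $W$ grows faster than any polynomial at infinity. Once Assumption \ref{ass:noncompactLDP2} is established, part (b) is immediate from Theorem \ref{thm:noncompactLDP}, and part (a) also follows: the drift condition gives non-explosion and global-in-time strong solutions of the system, Assumption \ref{ass:projStat} supplies parabolic H\"ormander and path controllability on both $\R^d$ and $\R^d \times P^{d-1}$, and together these yield unique stationary measures with smooth positive densities. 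The explicit formula $\rho = Z^{-1} \exp(-2V/\sigma^2)$ is then a short direct verification that $G^* \rho = 0$.

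The step I expect to be the main obstacle is item (b) of Assumption \ref{ass:noncompactLDP2}(1), namely that $|\sigma^T \nabla \tilde V|$ (equivalently $|\nabla V|$) has compact sublevel sets. This is not self-evident from Assumption \ref{ass:gradient} alone: a priori $|\nabla V|$ could remain bounded along some unbounded sequence $x_n \to \infty$ along which $V(x_n) \to \infty$, while the $\ll$-conditions $|\Delta V|, \|\rmD^2 V\| \ll |\nabla V|^2$ remain compatible. Closing this gap will likely require combining compactness of the sublevel sets of $V$ with the domination $\|\rmD^2 V\| \ll |\nabla V|^2$ via a Taylor or mean-value argument, showing that bounded $|\nabla V|$ along such a sequence would force only controlled growth of $V$ in a neighborhood of $x_n$, eventually contradicting $V(x_n) \to \infty$.
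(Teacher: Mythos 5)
The paper states this corollary without proof, so the benchmark is the natural intended argument, and your proposal matches it in structure: verify Assumption~\ref{ass:noncompactLDP2} from Assumption~\ref{ass:gradient} and invoke Theorem~\ref{thm:noncompactLDP}, with part (a) following from the drift condition plus the standing H\"ormander/controllability hypotheses. Your computations for items (1)(a), (1)(c) and (2) are correct, and the rescaling $\tilde V = 2V/\sigma^2$ is not merely cosmetic but necessary: with $V$ itself one gets $GV + \tfrac{\eta}{2}|\sigma^T\nabla V|^2 = (\tfrac{\eta\sigma^2}{2}-1)|\nabla V|^2 + \tfrac{\sigma^2}{2}\Delta V$, which for $\sigma^2>2$ and $\eta$ close to $1$ has the wrong sign and cannot be $\sim -\sigma^2|\nabla V|^2$; so the corollary's phrase ``satisfied for $V$'' should really read ``for a suitable positive multiple of $V$''. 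Good catch.

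The step you flag as the main obstacle is, however, a genuine gap that your proposed repair cannot close. Take $d=1$ and $V(x)=\sqrt{1+x^2}$: this is smooth with polynomially growing derivatives and compact sublevel sets, and since $V''(x)=(1+x^2)^{-3/2}$ while $|V'(x)|^2 = x^2/(1+x^2)$, the ratio $|V'|^2/|V''| = x^2(1+x^2)^{1/2}$ has compact sublevel sets, so Assumption~\ref{ass:gradient}(b) holds in full; yet $|\nabla V|\leq 1$ everywhere, so Assumption~\ref{ass:noncompactLDP2}(1)(b) fails for $V$ and for every positive rescaling of it. Hence no Taylor or mean-value argument can derive compactness of the sublevel sets of $|\nabla V|$ from Assumption~\ref{ass:gradient} as literally stated --- linear growth of $V$ is perfectly compatible with $V\to\infty$ and with both $\ll$-conditions. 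Note that this condition is not a technicality one could drop: without $|\sigma^T\nabla V|\to\infty$ the relation $GW/W \sim -\eta|\sigma^T\nabla V|^2$ from the remark after Theorem~\ref{thm:noncompactLDP} does not produce the drift inequality $GW\leq -\alpha W+\beta$ with $\alpha>0$, so even part (a) would not follow. The correct resolution is to read the corollary together with the remark that follows it: for the intended class of superlinearly growing potentials one has $|\nabla V|\to\infty$ at infinity, and then item (1)(b) is immediate and the rest of your argument goes through. You should state explicitly that you are adding (or extracting from the superlinear-growth class) the hypothesis that $|\nabla V|$ has compact sublevel sets, rather than attempting to prove it.
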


\begin{rmk} Assumption \ref{ass:gradient}(b) is mild and holds automatically for a broad class of potentials with superlinear growth at infinity, e.g., $V(x) = c |x|^p + v(x)$ where $c > 0$, $p > 1$ is an integer, and $v : \mathbb{R}^d \to \mathbb{R}$ is of Schwartz class and satisfies $v / |x|^{p -\epsilon} \to 0$ as $|x| \to \infty$. 
\end{rmk}

\section{Random bifurcations via large deviation principles}
  \label{sec:randBifurc}


We now turn to the definition and properties of finite-time Lyapunov exponent transitions. 

\subsubsection*{Settings}

In what follows, we will be working under one of two cases, (1) $M$ compact and (2) $M = \R^d$, the standing assumptions of which are listed below. 
Throughout, $f_j^\alpha, j = 0, \dots, m$ are smooth vector fields on $M$ depending smoothly on a parameter $\alpha \in I$, where $I \subset \R$ is an open interval, and we write $X_t^\alpha, Y_t^\alpha$ and $s_t^\alpha$ for the corresponding processes solving \eqref{eq:SDE_gen}, \eqref{eq:var_process} and \eqref{eq:projective_process}, respectively. 

\begin{itemize}
	\item[(1)] \emph{The case $M$ is compact.} Here, $M$ is a fixed compact Riemannian manifold. We impose Assumption \ref{ass:projStat} for all $\alpha \in I$, and with $\gamma_\pm = \gamma_\pm^\alpha$ as in Theorem \ref{thm:mlp}, we will assume that $\gamma_-^\alpha < \gamma_+^\alpha$ for all $\alpha \in I$. Finally, we will impose that 
	\begin{align} \label{eq:forceByIsom3} q_j^\alpha \equiv 0 \, , \ j \geq 1\,,
	 \end{align}
	where $q_j^\alpha(X, s) = \langle s, \rmD f_j^\alpha (X) s \rangle$ as in \eqref{eq:defnQ2} (c.f. Remark \ref{rmk:forceIsom3} below).
	\item[(2)] \emph{The case $M = \R^d$.} Here, the vector fields $f_j^\alpha$ are constant vectors. We impose Assumption \ref{ass:projStat}, and moreover, that Assumption \ref{ass:noncompactLDP2} holds with $\sigma^\alpha = (f_1^\alpha \cdots f_m^\alpha)$ and for some Schwartz function $V^\alpha$ for all $\alpha \in I$. Lastly, we impose that $\gamma_-^\alpha < \gamma_+^\alpha$ with $\gamma^\alpha_\pm$ as in Theorem \ref{thm:noncompactLDP}. 
\end{itemize}
At times, in either setting, we may additionally impose Assumption \ref{ass:nondegenFTLE2}.

{Note that in either case, we have that (i) the SDEs defining $(X_t^\alpha, Y_t^\alpha)$ and $(X_t^\alpha, s_t^\alpha)$ admit unique strong solutions for all time, and (ii) the asymptotic Lyapunov exponent $\lambda(\a) = \lim_t \lambda_t^\alpha(X_0, s_0)$ exists with probability 1 and is constant over fixed initial $(X_0, s_0) \in PM$. 
Moreover, in either case it holds that the FTLEs are given by 
\begin{gather*} 
	\lambda_t^\alpha(X_0, s_0) = \frac1t \log \| Y_t^\alpha\| = \frac1t \int_0^tQ^\alpha(X_\tau^\alpha, s_\tau^\alpha) \, \rmd \tau
	\, , \\  \text{ where } \quad Q^\alpha(X, s) = \langle s, D f_0^\alpha(X) s \rangle \,, 
\end{gather*}
	interpreting, as usual, $Y_0 = Y_0^\alpha$ to be an arbitrary unit vector representative for $s_0 = s_0^\alpha \in P_{X_0} M$. In either case, let $\mathcal{I}_\alpha$ denote the corresponding rate function. 
}

\begin{rmk}\label{rmk:forceIsom3}
	Condition \eqref{eq:forceByIsom3} says, essentially, that the noise vector fields $f_j^\alpha$ are infinitesimal isometries w.r.t. the Riemannian metric $g$ on $M$. This assumption is conspicuous, but necessary to witness interesting transitions in the statistics of FTLE. For more details, see Section \ref{subsubsec:infIsom3}. 
\end{rmk}


\subsection{FTLE transitions in terms of LDP}

Assume either of settings (1) or (2). 

\begin{defn}[FTLE transition]
	\label{def:random_bif}
	{
	We say that a \emph{finite-time Lyapunov exponent transition} occurs at $\a = \a_0 \in I$ if there exists $\epsilon > 0$ such that $(\alpha_0 - \epsilon , \alpha_0 + \epsilon) \subset I$ and 
	\begin{itemize}
		\item[(a)] for all $\a \in (\a_0 - \epsilon, \a_0)$, $(X_0, s_0) \in PM$ and $t \geq 0$, 
		\[\lambda_t^\alpha(X_0, s_0) < 0 \qquad \text{ with probability 1};\]
		 and
		\item[(b)] for all $\alpha \in (\a_0, \a_0 + \epsilon)$ 
		there is some $(X_0, s_0) \in PM$ and $t_0 > 0$ such that 
		\begin{equation*}
			\mathbb{P}(\lambda_{t_0}^\alpha(X_0,s_0) > 0 ) > 0 \,. 
		\end{equation*} 
	\end{itemize}
	}
	\end{defn}

The following is a characterization of the FTLE transition in terms of the large deviations principles laid out in Section \ref{sec:prelim}. 

\begin{thm}\label{thm:main}
{  Assume either of settings (1) or (2). 
\begin{itemize}
	\item[(a)] Suppose that an FTLE transition occurs at $\alpha_0$, and let $\epsilon > 0$ be as in Definition \ref{def:random_bif}. Then, $\Ic_\alpha(0) = \infty$ for $\alpha \in (\alpha_0 - \epsilon, \alpha_0)$ and $\Ic_\alpha(0) < \infty$ for $\alpha \in (\alpha_0, \alpha_0 + \epsilon)$. 
	\item[(b)] Let Assumption \ref{ass:nondegenFTLE2} hold. Suppose there is an interval $(\alpha_0 - \epsilon, \alpha_0 + \epsilon) \subset I, \epsilon > 0,$ such that $\Ic_\alpha(0) = \infty$ for $\alpha \in (\alpha_0 - \epsilon, \alpha_0)$ and $\Ic_\alpha(0) < \infty$ for $\alpha \in (\alpha_0, \alpha_0 +\epsilon)$. Then, an FTLE transition occurs at $\alpha_0$. 
\end{itemize}
}
\end{thm}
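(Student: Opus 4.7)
Plan. The plan is to reduce the four required implications (Part~(a) and Part~(b), each for $\alpha < \alpha_0$ and $\alpha > \alpha_0$) to a single \emph{Key Lemma}: \emph{assuming path controllability as in Assumption~\ref{ass:projStat}(b), if $\P_{(X_0, s_0)}(\lambda_{t_0}^\alpha > 0) > 0$ for some $t_0 > 0$ and $(X_0, s_0) \in PM$, then $\Ic_\alpha(0) < \infty$.} Part~(a) with $\alpha \in (\alpha_0, \alpha_0 + \epsilon)$ follows from the Key Lemma directly, and Part~(b) with $\alpha \in (\alpha_0 - \epsilon, \alpha_0)$ is its contrapositive, combined with Assumption~\ref{ass:nondegenFTLE2} to upgrade $\P(\lambda_t^\alpha > 0) = 0$ to $\P(\lambda_t^\alpha < 0) = 1$ by ruling out the null event $\{\lambda_t^\alpha = 0\}$. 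The remaining two implications are more direct: Part~(b) with $\alpha \in (\alpha_0, \alpha_0 + \epsilon)$ follows from Corollary~\ref{cor:LDP} (activated by Assumption~\ref{ass:nondegenFTLE2}), since formula~\eqref{eq:simplerLDPFormula} gives $\tfrac{1}{t}\log \P(\lambda_t^\alpha > 0) \to -\Ic_\alpha(0) > -\infty$, hence $\P(\lambda_t^\alpha > 0) > 0$ for all sufficiently large $t$; and Part~(a) with $\alpha \in (\alpha_0 - \epsilon, \alpha_0)$ uses the pathwise identity $\lambda_t^\alpha = \tfrac{1}{t}\int_0^t Q^\alpha(X_\tau, s_\tau)\,\rmd\tau$ (valid since $q_j^\alpha \equiv 0$ for $j \geq 1$) together with full support of the path law to conclude that $\{\gamma : \int_0^t Q^\alpha > 0\}$, being open in the compact-open topology and of zero path-law measure, is empty; this forces $Q^\alpha \leq 0$ on $PM$, and in the generic regime where $\sup_{PM} Q^\alpha < 0$ (as occurs for the pitchfork with $\alpha < 0$), the bound $\|Y_t\| \leq e^{t\sup Q^\alpha}$ yields $\Lambda_\alpha(p) \leq p \sup Q^\alpha \to -\infty$ and thus $\Ic_\alpha(0) = \sup_p(-\Lambda_\alpha(p)) = +\infty$.

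To prove the Key Lemma, I would first strengthen the hypothesis by monotone convergence on the increasing union $\{\lambda_{t_0}^\alpha > 0\} = \bigcup_k \{\lambda_{t_0}^\alpha > 1/k\}$ to obtain $\P_{(X_0, s_0)}(\lambda_{t_0}^\alpha > \delta) > 0$ for some $\delta > 0$. The pathwise identity then makes
\[\tilde A := \Bigl\{\gamma \in C_{(X_0, s_0)}([0, t_0], PM) : \tfrac{1}{t_0}\int_0^{t_0} Q^\alpha(\gamma_\tau)\,\rmd\tau > \delta\Bigr\}\]
open in the compact-open topology; by full support of the path law (Assumption~\ref{ass:projStat}(b)), $\tilde A$ is nonempty, so a deterministic continuous path $\gamma$ with this property exists, and by continuity of $Q^\alpha$ an entire compact-open neighborhood of $\gamma$ consists of paths $\tilde\gamma$ satisfying $\tfrac{1}{t_0}\int_0^{t_0} Q^\alpha(\tilde\gamma) > \delta/2$. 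For any $(Z, w) \in PM$, concatenating a transport path of some fixed duration $T$ from $(Z, w)$ into this neighborhood of $\gamma(0)$ with the good path on $[T, T + t_0]$ produces a path along which $\int_0^{T+t_0} Q^\alpha \geq \delta t_0 / 2 - CT$, whose probability is bounded below by some $c > 0$ uniformly in $(Z, w)$: in setting (1) this follows from compactness of $PM$, while in setting (2) one first uses the drift condition (Assumption~\ref{ass:drift}) to ensure trajectories enter a compact set with uniform positive probability before executing the concatenation. The strong Markov property iterates this to $\P_{(Z, w)}(\lambda_{n\tau}^\alpha > r) \geq c^n$ for every $n$, with $\tau = T + t_0$ and an appropriate $r > 0$. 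Plugging into the LDP upper bound of Theorem~\ref{thm:LDPs}(i) on a closed interval $[0, N]$, with $N$ chosen so that $\P(\lambda_t^\alpha > N)$ decays faster than $c^{t/\tau}$, gives $\inf_{r' \in [0, N]} \Ic_\alpha(r') \leq \tau^{-1}\log(1/c) < \infty$; since $\lambda(\alpha) \leq 0$ for $\alpha$ just above $\alpha_0$ by continuity of the Lyapunov exponent from the regime $\alpha < \alpha_0$ where $\lambda \leq 0$, $\Ic_\alpha$ is strictly increasing on $[0, N]$ and the infimum is attained at $0$.

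The main obstacle is establishing the uniform-in-$(Z, w)$ lower bound $c > 0$ in setting (2), where $PM = \R^d \times P^{d-1}$ is noncompact and $Q^\alpha$ may itself grow at infinity: path controllability on the projective bundle must be carefully coupled with the confining effect of the drift condition of Assumption~\ref{ass:drift} and the $C_W$-boundedness of $\|\rmD f_0^\alpha\|$ from Assumption~\ref{ass:noncompactLDP2}(2) in order to prevent degeneration of the transport probabilities as $|Z| \to \infty$.
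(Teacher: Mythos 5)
Your decomposition into four implications, with the single \emph{Key Lemma} ``$\P(\lambda_{t_0}^\alpha>0)>0$ for some $t_0,(X_0,s_0)$ $\Rightarrow$ $\Ic_\alpha(0)<\infty$'' carrying the weight, is exactly the paper's architecture (their statements (I)--(IV), with the Key Lemma being their statement (II)), and the mechanism you propose for the Key Lemma --- full support of the path law, a uniform positive lower bound on the probability of a ``good'' path segment obtained from lower semicontinuity of $(X,s)\mapsto \mathbf{P}_{(X,s)}$ on an open set of paths, Markov iteration to get $\P(\lambda_{n\tau}>r)\geq c^n$, and the LDP upper bound to conclude --- is the paper's proof. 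Two points, however, deserve attention.

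First, a genuine gap in Part (a) for $\alpha\in(\alpha_0-\epsilon,\alpha_0)$: Definition \ref{def:random_bif}(a) gives only that $\lambda_t^\alpha<0$ almost surely, which (by your own path-support argument, or Proposition \ref{prop:LDPsignChange}(b)) forces $Q^\alpha\leq 0$ but \emph{not} $\sup_{PM}Q^\alpha<0$. Your conclusion $\Lambda_\alpha(p)\leq p\sup Q^\alpha\to-\infty$ therefore only covers what you yourself call the ``generic regime,'' and the case $\sup Q^\alpha=0$ (perfectly compatible with an FTLE transition at $\alpha_0$, e.g.\ a family in which $Q^\alpha$ touches zero from below on a parameter interval before crossing) is left open; there the pointwise bound gives nothing, and one must argue instead directly from $\P(\lambda_t^\alpha>0)=0$ for all $t$ together with the large-deviations characterization of $\Ic_\alpha(0)$ (this is how the paper phrases its statement (I), which it derives from the definitions rather than from a uniform negative bound on $Q^\alpha$). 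You should close this case rather than restrict to the strict-negativity regime.

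Second, in the Key Lemma your transport-and-concatenate step is both slightly off and unnecessary. As written, each block of duration $\tau=T+t_0$ contributes $\delta t_0/2-CT$ to $\int Q^\alpha$, which need not be positive, so $\P(\lambda_{n\tau}^\alpha>r)\geq c^n$ with $r>0$ does not follow from iterating that block; one would have to transport once and then repeat only the good segment. More importantly, the uniformity over all initial conditions $(Z,w)$ --- which you single out as the main obstacle, especially in the noncompact setting (2) --- is not needed at all: since $\Lambda_\alpha(p)$, hence $\Ic_\alpha(0)$, is independent of the initial condition, it suffices to produce the exponential lower bound for initial conditions lying in a single compact set $K$ with nonempty interior satisfying $K\subset U\subset\{Q^\alpha\geq c\}$, starting and returning to $K$ at integer times while remaining in $U$. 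This is what the paper does, and it removes both the transport step and any difficulty coming from noncompactness of $\R^d\times P^{d-1}$.
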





\begin{rmk} 
		We emphasize that there is no reason a priori to expect that the `soft', nonquantitative lower bound in Definition \ref{def:random_bif}(b) implies a quantitative, exponential-in-time lower bound. That this implication holds is a consequence of the Markov property and controllability as in Assumption \ref{ass:projStat}(b).


\end{rmk}



We now present another characterization of FTLE transitions useful for constructing and diagnosing particular examples. 
\begin{prop}\label{prop:LDPsignChange}
	Assume either of settings (1) or (2). 
	\begin{itemize}
		\item[(a)] Let Assumption \ref{ass:nondegenFTLE2} hold. Let $(\alpha_0 - \epsilon, \alpha_0 + \epsilon) \subset I, \epsilon > 0$ and assume $\{Q^\alpha > 0\}$ is empty for $\alpha \in (\alpha_0 - \epsilon, \alpha_0)$ and nonempty for $\alpha \in (\alpha_0, \alpha_0 + \epsilon)$. Then, an FTLE transition occurs at $\alpha_0$. 

		\item[(b)] Conversely, suppose that an FTLE transition occurs at $\alpha_0$, and let $\epsilon > 0$ be as in Definition \ref{def:random_bif}. Then, $\{Q^\alpha > 0\}$ is empty for $\alpha \in (\alpha_0 - \epsilon, \alpha_0)$ and nonempty for $\alpha \in (\alpha_0, \alpha_0 + \epsilon)$. 
	\end{itemize}
\end{prop}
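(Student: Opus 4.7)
My strategy is to exploit the explicit formula
\[\lambda_t^\alpha(X_0, s_0) = \frac1t \int_0^t Q^\alpha(X_\tau^\alpha, s_\tau^\alpha) \, \rmd \tau\]
recorded in the setup above, which reduces the sign of the FTLE to the sign of $Q^\alpha$ along sample paths of the projective process. In both directions, I plan to combine pointwise sign information on $Q^\alpha$ with the path controllability of $(X_t^\alpha, s_t^\alpha)$ on $PM$ (Assumption \ref{ass:projStat}(b)), which lets me convert local positivity of $Q^\alpha$ at a single point into a positive-probability event on path space.

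For part (a), I will treat the two sides of $\alpha_0$ separately. On the left side $\alpha \in (\alpha_0 - \epsilon, \alpha_0)$, emptyness of $\{Q^\alpha > 0\}$ forces $Q^\alpha \leq 0$ pointwise on $PM$, so pathwise $\lambda_t^\alpha \leq 0$ for every $t > 0$ and every initial condition; Assumption \ref{ass:nondegenFTLE2} then rules out $\mathbb{P}(\lambda_t^\alpha = 0) > 0$ and yields strict negativity a.s. On the right side $\alpha \in (\alpha_0, \alpha_0 + \epsilon)$, I will pick $(x^*, s^*) \in PM$ with $Q^\alpha(x^*, s^*) > 0$, use smoothness to choose an open neighborhood $U \subset PM$ of $(x^*, s^*)$ and some $\delta > 0$ with $Q^\alpha \geq \delta$ on $U$, and then invoke path controllability of the projective process to conclude that, starting from $(x^*, s^*)$, the open set of continuous paths in $C_{(x^*, s^*)}([0,t_0], PM)$ remaining in $U$ up to any fixed time $t_0 > 0$ has positive probability. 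On that event $\lambda_{t_0}^\alpha \geq \delta > 0$, which verifies Definition \ref{def:random_bif}(b).

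For part (b), I will prove both implications by contrapositive. If $\{Q^\alpha > 0\}$ were nonempty for some $\alpha \in (\alpha_0 - \epsilon, \alpha_0)$, the same controllability argument from the right half of part (a) (which does not require Assumption \ref{ass:nondegenFTLE2}) would exhibit an initial condition and a time at which $\lambda_{t_0}^\alpha > 0$ with positive probability, contradicting clause (a) of Definition \ref{def:random_bif}. Conversely, if $\{Q^\alpha > 0\}$ were empty for some $\alpha \in (\alpha_0, \alpha_0 + \epsilon)$, then $Q^\alpha \leq 0$ pointwise would force $\lambda_{t_0}^\alpha \leq 0$ almost surely for every initial condition, contradicting clause (b) of Definition \ref{def:random_bif}.

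I expect the only real subtlety to be the controllability-to-positive-probability step: once Assumption \ref{ass:projStat}(b) is invoked at the level of $PM$ it is essentially automatic, but it is precisely what transfers a single point at which $Q^\alpha$ is positive into a positive-probability open event for the projective process. Otherwise the proof is a clean combination of the integral representation of $\lambda_t^\alpha$ with support-of-the-path arguments, and Assumption \ref{ass:nondegenFTLE2} is only needed to upgrade $\lambda_t^\alpha \leq 0$ to strict inequality in Definition \ref{def:random_bif}(a).
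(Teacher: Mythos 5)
Your proof is correct and follows essentially the same route as the paper: reduce everything to the integral representation $\lambda_t^\alpha = \frac1t\int_0^t Q^\alpha(X_\tau^\alpha,s_\tau^\alpha)\,\rmd\tau$, use emptiness of $\{Q^\alpha>0\}$ plus Assumption \ref{ass:nondegenFTLE2} for the strictly negative side, and produce a positive-probability event of positive FTLE from a point where $Q^\alpha>0$. The only (harmless) difference is that for that last step the paper just uses continuity of sample paths for small $t$, whereas you invoke path controllability on $PM$ to get the open set of paths confined to $\{Q^\alpha\geq\delta\}$; both work, and like the paper you correctly note that Assumption \ref{ass:nondegenFTLE2} is only needed to upgrade $\lambda_t^\alpha\leq 0$ to strict negativity.
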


\begin{exam}\label{exam:equilibrium3}
	Let $(\alpha_0 - \epsilon, \alpha_0 + \epsilon) \subset I, \epsilon > 0$ and suppose that the drift vector field $f_0^\alpha$ is such that 
	\begin{itemize}
		\item $Q^\alpha < 0$ for $\alpha \in (\alpha_0 - \epsilon, \alpha_0)$; and 
		\item for $\alpha \in (\alpha_0, \alpha_0 + \epsilon)$ there exists a \emph{linearly unstable} equilibrium $\mathfrak{p}(\alpha)$ for $f_0^\alpha$, i.e., \[\spec\big((Df_0^\alpha)_{\mathfrak{p}(\alpha)}\big) \cap \{z \in \mathbb{C} :  \Re(z) > 0\} \neq \emptyset \, , \] where $\Re(z)$ denotes the real part of $z \in \mathbb{C}$. 
	\end{itemize}
	Then, $Q^\alpha > 0$ along the unstable eigenspace of $Df_0(\mathfrak{p}(\alpha))$, and it follows that an FTLE transition occurs at $\alpha_0$. This is the case, for instance, when $f_0$ undergoes a  pitchfork bifurcation in $M = \R$ as in Section \ref{sec:test_problems}. 
\end{exam}

\begin{rmk}
	Observe that $Q^\alpha$ itself depends on the metric. 
	On the other hand, it is not hard to see that moment Lyapunov exponents, hence LDP rate functions, \emph{do not depend on the metric}. 
Hence, Proposition \ref{prop:LDPsignChange}  relates $\Ic_\alpha(0)$, a metric-independent quantity, to the metric-dependent set $\{ Q^\alpha > 0\}$. 
	Note, however, that we are in the special settings of (1) and (2), where the metric-dependent quantities $q_j^{\alpha}$, $j\geq 1$, are taken to be zero, with the effect of essentially fixing the metric.
\end{rmk}

\subsubsection{Conditions for Assumption \ref{ass:nondegenFTLE2}}\label{subsubsec:conditionG}

Before continuing, we give some comments on Assumption \ref{ass:nondegenFTLE2} in the settings laid out at the beginning of Section \ref{sec:randBifurc}. Below, the parameter $\alpha \in I$ is fixed, but since $\alpha$ plays no role we drop it from the notation. 

Below, $r_t = t \lambda_t(X_0, s_0) = \log \| Y_t\|$ is the solution to the auxiliary random ODE 
\[\rmd r_t = Q(X_t, s_t) \rmd t \, ,  \]
so that $(X_t, s_t, r_t)$ is the solution of an SDE with drift vector field 
\[\tilde f_0(X, s, r) = \begin{pmatrix}
	f_0(X) \\ h_0(X, s) \\ Q(X, s)
\end{pmatrix}\]
and diffusion vector fields 
\[\tilde f_j(X, s, r) = \begin{pmatrix}
	f_j(X, s) \\ h_j(X, s) \\ 0 
\end{pmatrix} \, , \]
all viewed on $PM \times \R$. 

\begin{lem}
	Impose either of settings (1) or (2). Assume moreover that $\{ \tilde f_j\}_{j = 0}^m$ satisfies the parabolic H\"ormander condition (c.f. Assumption \ref{ass:statDensity}). Then, Assumption \ref{ass:nondegenFTLE2} holds, i.e., for all $(X_0, s_0) \in PM$ and $t > 0, c \in \R$ and $\alpha \in I$, it holds that 
	\[\P(\lambda_t(X_0, s_0) = c) = 0  \,. \]
\end{lem}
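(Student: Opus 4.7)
The plan is to reduce the statement to absolute continuity of the law of the ``log-growth'' coordinate $r_t$, and then deduce this from H\"ormander's hypoellipticity theorem applied to the extended system $(X_t, s_t, r_t)$ on $PM \times \R$.

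First, I observe the trivial reduction
\[
\{\lambda_t(X_0, s_0) = c\} \;=\; \{r_t = t c\},
\]
since $r_t = \log \|Y_t\| = t \lambda_t(X_0, s_0)$ by definition. Hence it suffices to show that, for each fixed $t > 0$, the random variable $r_t$ has a non-atomic distribution on $\R$; in fact I will show that its law is absolutely continuous with respect to Lebesgue measure.

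Next, the process $(X_t, s_t, r_t)$ is driven by the Stratonovich SDE whose drift vector field is $\tilde f_0$ and whose diffusion vector fields are $\tilde f_1, \dots, \tilde f_m$, all displayed just above the statement. By assumption these vector fields satisfy the parabolic H\"ormander condition on $PM \times \R$, so H\"ormander's theorem (see, e.g., \cite{hormander1967hypoelliptic}) applies: the operator $\partial_t - \tilde G$, with $\tilde G = \tilde f_0 + \tfrac12 \sum_{j=1}^m \tilde f_j^2$ the H\"ormander-form generator of $(X_t, s_t, r_t)$, is hypoelliptic. Consequently, for every $t > 0$ and every initial point $(X_0, s_0, 0)$, the law of $(X_t, s_t, r_t)$ admits a smooth density $\tilde p_t\bigl((X_0, s_0, 0), \cdot\bigr)$ on $PM \times \R$. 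Global-in-time existence of this process is automatic in setting (1) by compactness, and is guaranteed in setting (2) by Assumption \ref{ass:drift} applied to the full extended system (the $r$-equation is a pathwise integral and introduces no explosion).

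Finally, I integrate the joint smooth density over the $(X,s)$ variables to obtain a density for the marginal law of $r_t$ with respect to Lebesgue measure on $\R$. In particular, $r_t$ has no atoms, so $\P(r_t = tc) = 0$ for every $c \in \R$, which is the claim. I do not anticipate any real obstacle: the only non-routine input is verifying the hypotheses of H\"ormander's theorem, but this is precisely what the parabolic H\"ormander assumption on $\{\tilde f_j\}_{j=0}^m$ supplies. I would only want to be a bit careful in setting (2), where $PM \times \R$ is noncompact, to invoke the \emph{local} form of the hypoellipticity theorem (which suffices to produce a smooth transition density at any fixed $(t, (X_0, s_0, 0))$) rather than a global one.
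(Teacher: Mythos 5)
Your proposal is correct and follows essentially the same route as the paper: the paper's (one-sentence) proof also applies H\"ormander's theorem to the extended process $(X_t, s_t, r_t)$ to obtain smooth transition densities and concludes that $\lambda_t = r_t/t$ has an atomless law. Your additional care about the marginal of $r_t$ and the local form of hypoellipticity in the noncompact setting is consistent with, and merely fleshes out, the paper's argument.
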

This follows immediately from H\"ormander's Theorem applied to the transition kernels of the $(X_t, s_t, r_t)$ process (see e.g.~\cite{Hairer2011}), 
where we make use of the diffemorphic relation between the $(X_t, Y_t)$ coordinates and the $(X_t, s_t, r_t)$ coordinates for the variational process.

We caution that for a given sample path (i.e., for a fixed Brownian realization $(W_t^j)_{j = 1}^m$), it is possible that $\lambda_t(X_0, s_0) = 0$ along some set of random times $t$. 
Assumption \ref{ass:nondegenFTLE2} ensures that the likelihood of $\{\lambda_t(X_0, s_0) = 0\}$ at a \emph{given time} $t$ is zero. 

For diffusions on $\R$ with additive noise, the following simple sufficient condition can be used to check Assumption \ref{ass:nondegenFTLE2}. 
Note that in one dimension the $s_t$ coordinate is trivial, and so for Assumption \ref{ass:nondegenFTLE2} it is enough to have bracket spanning for the process $(X_t, r_t)$. 

\begin{lem}
	Let 
	\[\rmd X_t = f_0(X_t) \rmd t + \sigma \rmd W_t\]
	be a diffusion on $M = \T$ or $\R$ for some $\sigma > 0$. Assume that for all $X \in M$, there is some $k \in \N$ such that $f^{(k)}(X) \neq 0$, where $f^{(k)}$ is the $k$-th derivative of $f$. 
	  Then, the process $(X_t, r_t)$ satisfies the parabolic H\"ormander condition, and in particular, Assumption \ref{ass:nondegenFTLE2} holds. 
\end{lem}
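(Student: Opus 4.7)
The plan is to appeal to the preceding lemma by verifying the parabolic Hörmander condition for the augmented process $(X_t, r_t)$ on $M \times \R$, where $r_t = \log |Y_t|$. In dimension one the projective fibre is trivial, and additive noise forces $q_j \equiv 0$ for $j \geq 1$ together with $Q(X) = f_0'(X)$, so that $r_t$ satisfies $\rmd r_t = f_0'(X_t)\, \rmd t$. The augmented system is therefore an SDE on $M \times \R$ with drift and (single) diffusion vector field
\[
\tilde f_0(X, r) = \bigl( f_0(X),\, f_0'(X) \bigr)^T, \qquad \tilde f_1(X, r) = (\sigma, 0)^T.
\]

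The heart of the proof is a Lie bracket computation. Setting $V_0 := \tilde f_0$ and $V_{k+1} := [\tilde f_1, V_k]$, the constancy of $\tilde f_1$ gives $D\tilde f_1 \equiv 0$, so each bracket reduces to $V_{k+1}(X, r) = DV_k(X, r) \cdot \tilde f_1$, and a direct induction yields
\[
V_k(X, r) = \sigma^k \bigl( f_0^{(k)}(X),\, f_0^{(k+1)}(X) \bigr)^T, \qquad k \geq 0.
\]
Each $V_k$ belongs to the iterated bracket collection $\mathcal V_k$ associated to $\{\tilde f_0, \tilde f_1\}$ in Assumption \ref{ass:statDensity}(a). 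To verify parabolic Hörmander at an arbitrary $(X_\star, r_\star) \in M \times \R$, observe that $\tilde f_1(X_\star, r_\star) = (\sigma, 0)^T$ already spans the horizontal direction; by hypothesis on $f_0$, some derivative of $f_0$ at $X_\star$ is nonzero, supplying an index $k$ for which the second coordinate $\sigma^k f_0^{(k+1)}(X_\star)$ of $V_k(X_\star, r_\star)$ is nonzero, and the pair $\{\tilde f_1, V_k\}$ then spans $\R^2 = T_{(X_\star, r_\star)}(M \times \R)$.

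Once parabolic Hörmander is established for $(X_t, r_t)$, the preceding lemma immediately delivers Assumption \ref{ass:nondegenFTLE2}: by Hörmander's theorem the joint transition kernel of $(X_t, r_t)$ is absolutely continuous for every $t > 0$, so the marginal law of $r_t$ has no atoms, and hence $\P(\lambda_t(X_0) = c) = \P(r_t = c t) = 0$ for every $c \in \R$. I foresee no substantial obstacle; the only bit of care required is to confirm that the triviality of the projective coordinate in one dimension genuinely reduces Assumption \ref{ass:nondegenFTLE2} to absolute continuity of the marginal law of $r_t$, for which parabolic Hörmander on $(X_t, r_t)$ alone is sufficient via the preceding lemma.
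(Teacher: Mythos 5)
Your proposal is correct and follows essentially the same route as the paper: both compute the iterated Lie brackets of the constant noise field with the augmented drift $(f_0, f_0')^T$, obtaining $(f_0^{(\ell)}, f_0^{(\ell+1)})^T$ (up to powers of $\sigma$), and then use the noise field for the first coordinate and a suitable bracket for the second. The only cosmetic differences are that the paper normalizes the noise vector field to $(1,0)^T$ and leaves the passage from H\"ormander to Assumption \ref{ass:nondegenFTLE2} to the cited general statement, whereas you spell out the no-atoms conclusion for the marginal of $r_t$.
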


\begin{proof}
	The relevant vector fields in the spanning condition for $(X_t, r_t)$ are
	\[V_0 = \begin{pmatrix}
		f_0 \\ f_0'
	\end{pmatrix} \, , \qquad V_1 = \begin{pmatrix}
		1 \\ 0 
	\end{pmatrix} \, . \]
	Since $V_1$ always spans the first coordinate, it suffices, for each $X \in \R$, to determine brackets of $V_0, V_1$ which have nonvanishing second coordinate (note that there is no dependence of these vector fields on the $r$ coordinate). 
	
	For this, 
	 observe that for all $\ell \geq 1$, the $\ell$-fold iterated bracket of $V_1$ with $V_0$ gives
	\[\underbrace{[V_1, [ V_1, \cdots , [V_1 , [V_1}_{\ell \text{ times}}, V_0]]  \cdots ]]  = \frac{d^\ell}{dX^{\ell}}\begin{pmatrix}
		f_0 \\ f_0'
	\end{pmatrix} = \begin{pmatrix}
		f_0^{(\ell)} \\ f_0^{(\ell + 1)}
	\end{pmatrix} \,. \]
	Thus, the parabolic H\"ormander condition holds for $(X_t, r_t)$ at a point $(X, r)$ if for some $k \in \N$ one has that $f_0^{(k)}(X)$ does not  vanish. 
\end{proof}

\subsubsection{Driving by infinitesimal isometries}\label{subsubsec:infIsom3}

 We conclude with a discussion of the role played by condition \eqref{eq:forceByIsom3} in the case when $M$ is compact. Put in other words, this condition requires that the vector fields $f_j^\alpha$ are infinitesimal generators of isometries on $M$. The following addresses what happens without this condition. 
	
	Since $\alpha$ plays no role, we drop the $\alpha$ dependence and focus on a single collection of vector fields $(f_j)_{0 \leq j \leq m}$. We will impose Assumption \ref{ass:projStat}, but we will not impose \eqref{eq:forceByIsom3}. Instead, we ask only for the mild strengthening 
	\begin{align}
		T_{(X, s)} PM = \operatorname{Lie}_{(X, s)}\left\{ \begin{pmatrix}
			f_1 \\ h_1 
		\end{pmatrix} , \dots, \begin{pmatrix}
			f_m \\ h_m 
		\end{pmatrix} \right\}
	\end{align}
	at all $(X, s) \in PM$. Let $\Lambda(p)$ be as in Theorem \ref{thm:mlp}. 

	\begin{thm}[{\cite[Theorem 2.15]{BaxendaleStroock88}}] \label{thm:baxRigidity3}
		The following are equivalent. 
		\begin{itemize}
			\item[(i)] $\lim_{p \to \infty} \frac{\Lambda(p)}{p^2} = 0$. 
			\item[(ii)] $\lim_{p \to -\infty} \frac{\Lambda(p)}{p^2} = 0$. 
			\item[(iii)] There exists a Riemannian metric $\tilde{g}$ on $M$ with respect to which the vector fields $f_j, 1 \leq j \leq m $ are all infinitesimal isometries. In particular, with respect to the metric $\tilde{g}$, it holds that 
			\[\tilde q_j(X, s) := \langle \rmD f_j(X) s, s \rangle_{\tilde g}\]
			is identically equal to zero. 
		\end{itemize}
	\end{thm}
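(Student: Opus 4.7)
My plan splits the equivalence into the two main implications. The direction $(iii) \Rightarrow (i), (ii)$ is the easier one and hinges on the metric independence of moment Lyapunov exponents in the compact setting: for any two smooth Riemannian metrics $g, \tilde g$ on the compact $M$, the norms $\|Y_t\|_g$ and $\|Y_t\|_{\tilde g}$ are bi-Lipschitz equivalent uniformly in $X_t$, so $\mathbb{E}\|Y_t\|^p$ computed in either metric agrees up to multiplicative constants bounded uniformly in $t$. Hence $\Lambda(p)$ is the same under either metric. Computing $\Lambda(p)$ in the metric $\tilde g$ provided by $(iii)$, the $\tilde q_j$ all vanish, so~\eqref{eq:linear_growth} reduces to $\|Y_t\|_{\tilde g} = \|Y_0\|_{\tilde g}\exp\int_0^t \tilde Q\, \rmd\tau$ with $\tilde Q$ continuous on the compact $PM$, yielding $|\Lambda(p)| \leq |p|\|\tilde Q\|_\infty$ and therefore both $(i)$ and $(ii)$.

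For the converse, the first step is to identify the asymptotic value of $\Lambda(p)/p^2$ with an extremum involving $\sum_j q_j^2$. Writing~\eqref{eq:linear_growth} in It\^o form and applying Girsanov to absorb the stochastic integral $p\sum_j \int_0^t q_j\, \rmd W^j_\tau$ into the drift of the underlying diffusion, one arrives at
\[
\mathbb{E}_{(X,s)}\|Y_t\|^p \;=\; \mathbb{E}^{p}_{(X,s)} \exp\!\Bigl(p\!\int_0^t Q\, \rmd\tau \;+\; \tfrac{p^2}{2}\!\int_0^t\!\sum_j q_j^2\, \rmd\tau\Bigr),
\]
where $\mathbb{E}^{p}$ is expectation under an equivalent measure whose drift on $(X_t,s_t)$ is shifted by $p\sum_j q_j f_j$. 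Combining spectral properties of the tilted generator $L_p$ from Remark~\ref{rmk:gammas}(d) with a Donsker--Varadhan/Freidlin--Wentzell analysis of occupation measures on $PM$, I would prove
\[
\lim_{p\to \pm\infty}\frac{\Lambda(p)}{p^2}\;=\;\tfrac12\sup_{\mu\in\mathcal{P}(PM)}\int_{PM}\sum_j q_j^2\, \rmd\mu,
\]
so that $(i)$ or $(ii)$ forces $\sum_j q_j^2 \equiv 0$ on the support of every invariant measure for the projective diffusion. Strong irreducibility from Assumption~\ref{ass:projStat}(b) then promotes this to $\sum_j q_j^2 \equiv 0$ on all of $PM$.

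The final step is to convert this pointwise vanishing of a metric-dependent quantity into the existence of the metric $\tilde g$ in $(iii)$. The idea here is to exploit the freedom in choosing the metric: rather than arguing directly in $g$, I would construct $\tilde g$ by averaging a bilinear form against a stationary distribution for the flows of the $f_j$'s acting on the orthonormal frame bundle of $M$. Concretely, define $\tilde g_x(u,v)$ as the integral of $g_y(\Phi^{-1}u, \Phi^{-1}v)$ where $(y,\Phi)$ ranges over the frame bundle trajectory under iterated time-one maps of the $f_j$'s, weighted by a stationary measure whose existence should follow from the compactness of $M$ together with the non-degeneracy coming from the parabolic H\"ormander condition. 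Skew-symmetry of $\rmD^{\tilde g} f_j$, and therefore the Killing property, then follows from invariance of $\tilde g$ under the flow of each $f_j$.

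The hard part is clearly the second implication, and within it the construction of $\tilde g$. The variational identification of $\lim \Lambda(p)/p^2$ is technically heavy but structurally familiar, whereas producing a genuine smooth nondegenerate metric whose invariance group contains all the $f_j$'s is a rigidity statement that genuinely uses both Assumption~\ref{ass:projStat} and the compactness of $M$. I would expect the main obstruction to lie in ruling out degeneracy of the averaged form and in ensuring sufficient regularity — likely requiring one to work first on the frame bundle and then descend, using H\"ormander-type smoothness of the stationary density there.
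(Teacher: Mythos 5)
The paper does not prove this statement: Theorem~\ref{thm:baxRigidity3} is quoted verbatim from \cite[Theorem 2.15]{BaxendaleStroock88}, so there is no internal proof to compare against and your attempt has to be judged on its own. Your direction $(iii)\Rightarrow(i),(ii)$ is fine: on compact $M$ any two metrics are uniformly equivalent, so $\Lambda$ is metric-independent, and computing in $\tilde g$ kills the martingale part of \eqref{eq:linear_growth}, giving $|\Lambda(p)|\leq |p|\,\|\tilde Q\|_\infty$ and hence sub-quadratic growth in both directions.

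The converse, however, has a genuine gap at its central step. Your claimed identity $\lim_{p\to\pm\infty}\Lambda(p)/p^2=\tfrac12\sup_{\mu}\int\sum_j q_j^2\,\rmd\mu$ cannot be correct: the left side is metric-independent while the right side is computed from the metric-dependent $q_j$, and under your Hörmander/irreducibility assumptions the supremum over admissible $\mu$ is essentially $\tfrac12\|\sum_j q_j^2\|_\infty$, which is strictly positive for a generic metric even when $(iii)$ holds (take $M=S^1$, $f_1=\partial_\theta$, and a non-flat metric). The error is in the Girsanov step: the tilted measure $\mathbb{E}^p$ has a drift perturbation of size $p$, so the Donsker--Varadhan cost of holding the occupation measure at any given $\mu$ is itself of order $p^2$ and does \emph{not} disappear after dividing by $p^2$. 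The correct limit is a sup of $\tfrac12\int\sum_j q_j^2\,\rmd\mu$ \emph{minus} this $O(1)$ normalized cost, which after optimization produces a gauge-invariant quantity of the form $\tfrac12\inf_F\sup_{(X,s)}\sum_j\bigl(q_j+(f_j+h_j)F\bigr)^2$; it is the vanishing of \emph{that} which is equivalent to $(iii)$. A quick internal consistency check exposes the problem: if your formula were right, $(i)$ would force $q_j\equiv 0$ in the \emph{original} metric, so $\tilde g=g$ would already witness $(iii)$ and your entire frame-bundle averaging construction in the last step would be unnecessary — yet that conclusion is false in the example above. The real work in Baxendale--Stroock is precisely to extract the gauge function $F$ from the $p\to\infty$ (WKB-type) asymptotics of the principal eigenfunction of $L_p$ and to show it is fiberwise quadratic, hence comes from a Riemannian metric; your averaging construction does not substitute for that, and in any case sits downstream of an intermediate claim that is too strong to be true.
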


	The following is immediate, c.f., \cite[Section 1]{BaxendaleStroock88}. 
	\begin{cor}\label{cor:isomGen3}
		Assume the setting of Theorem \ref{thm:baxRigidity3}. Suppose there is no Riemannian metric with respect to which all $f_j, 1 \leq j \leq m$ are infinitesimal isometries. Then, $\gamma_\pm = \pm \infty$. In particular, 
		\begin{itemize}
			\item[(a)] $\mathcal{I}(r) < \infty$ for all $r \in \R$; and 
			\item[(b)] There exists $T > 0$ such that for all $(X_0, s_0) \in PM$, one has 
			\[\P_{(X_0, s_0)}(\lambda_t > 0) > 0 \]
			for all $t \geq T$. 
		\end{itemize}
	\end{cor}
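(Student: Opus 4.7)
The plan is to derive $\gamma_\pm = \pm\infty$ from Theorem \ref{thm:baxRigidity3} via its contrapositive, and then read off items (a) and (b) essentially directly from the rate-function properties listed just after \eqref{eq:legendre-fenichel2} and from the uniform LDP lower bound in Theorem \ref{thm:LDPs}. The main obstacle will be this first step: translating ``$\Lambda(p)/p^2$ does not vanish at $\pm\infty$'' into ``$\gamma_+ = +\infty$ and $\gamma_- = -\infty$'' requires a short convexity argument, after which everything else is bookkeeping.

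For that first step, the hypothesis that no Riemannian metric makes all $f_j, 1 \le j \le m,$ into infinitesimal isometries negates (iii) in Theorem \ref{thm:baxRigidity3}, and hence both (i) and (ii) fail. I would then argue by contradiction that $\gamma_+ = +\infty$: if instead $\gamma_+ < \infty$, then since $p \mapsto \Lambda(p)/p$ is nondecreasing on $(0,\infty)$ with limit $\gamma_+$ (a consequence of Theorem \ref{thm:mlp}(v) combined with $\Lambda(0) = 0$), one obtains $\Lambda(p) \le \gamma_+ p$ for all $p > 0$, and therefore $\Lambda(p)/p^2 \le \gamma_+/p \to 0$, contradicting the failure of (i). A symmetric argument on $(-\infty, 0)$ forces $\gamma_- = -\infty$.

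For item (a), since $\gamma_\pm = \pm\infty$, every $r \in \R$ lies in $(\gamma_-, \gamma_+)$, and property~1 of $\Ic$ listed right after \eqref{eq:legendre-fenichel2} immediately yields $\Ic(r) < \infty$. For item (b), I would fix any $M > 0$, note that $\gamma_- < \gamma_+$ so that Theorem \ref{thm:LDPs} applies, and invoke its uniform lower bound on the open interval $(0, M)$:
\[
\liminf_{t \to \infty} \inf_{(X_0, s_0) \in PM} \frac{1}{t} \log \P_{(X_0, s_0)}(0 < \lambda_t < M) \ge - \inf_{r \in (0, M)} \Ic(r) =: -C,
\]
where $C \in [0, \infty)$ is finite by item (a). This produces a $T > 0$ such that $\P_{(X_0, s_0)}(\lambda_t > 0) \ge \P_{(X_0, s_0)}(0 < \lambda_t < M) \ge e^{-(C+1)t} > 0$ uniformly over $(X_0, s_0) \in PM$ for all $t \ge T$, which is exactly the conclusion of (b).
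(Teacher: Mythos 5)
Your argument is correct and is exactly the ``immediate'' deduction the paper has in mind (the paper offers no proof of this corollary, only a pointer to \cite{BaxendaleStroock88}); the contrapositive of Theorem \ref{thm:baxRigidity3}, the convexity/monotonicity bookkeeping for $\gamma$, and the uniform LDP lower bound from Theorem \ref{thm:LDPs} are the right ingredients. One small point: to contradict the failure of (i) you need the limit of $\Lambda(p)/p^2$ to actually exist and equal $0$, so the one-sided bound $\Lambda(p)\le\gamma_+p$ should be paired with the lower bound $\Lambda(p)\ge\lambda p$ (from $\gamma(p)\ge\gamma(0)=\lambda$ for $p>0$, by Theorem \ref{thm:mlp}(v)), which squeezes $\Lambda(p)/p^2$ between $\lambda/p$ and $\gamma_+/p$ and yields the contradiction.
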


	Corollary \ref{cor:isomGen3} leads to the following dichotomy: 
	either the noise vector fields are all infinitesimal isometries with respect to some metric $\tilde{g}$, in which case we can set $g = \tilde{g}$ and proceed as in \eqref{eq:forceByIsom3}, or there is no such metric, in which case we should not expect FTLE transitions. In this paper, we have chosen to deal with this state of affairs by assuming $q_j \equiv 0$ from the start. 

\subsection{Proofs}

Throughout the proofs of Theorem \ref{thm:main} and Proposition \ref{prop:LDPsignChange} it will be enough to work with one $\alpha$ at a time. From here on, $\alpha$ is fixed and dependence on it will be dropped.

\subsubsection*{Proof of Theorem \ref{thm:main}}

For (a), it suffices to prove the following. For now, Assumption \ref{ass:nondegenFTLE2} is not used. 

\begin{itemize}
	\item[(I)] If $\P(\lambda_t(X_0, s_0) < 0) = 1$ for all $(X_0, s_0) \in PM$ and $t > 0$, then $\Ic(0) = \infty$. 
	\item[(II)] If $\P(\lambda_{t_0}(X_0, s_0) > 0) > 0$ for some $(X_0, s_0) \in PM$ and $t_0 > 0$, then $\Ic(0) < \infty$. 
\end{itemize}
(I) is evident from the definitions. The proof of (II) will deferred to the end. For Theorem \ref{thm:main}(b) it suffices to check the following. 

\begin{itemize}
	\item[(III)] $\Ic(0) < \infty$ implies that there exists $(X_0, s_0) \in PM$ and $t_0 > 0$ such that $\lambda_{t_0}(X_0, s_0) > 0$ with positive probability. 
	\item[(IV)] Let Assumption \ref{ass:nondegenFTLE2} hold. Then, $\Ic(0) = \infty$ implies that for all $(X_0, s_0) \in PM$ and $t > 0$ it holds that $\lambda_t(X_0, s_0) < 0$ with full probability. 
\end{itemize}

(III) is also evident from the definitions. Given (II), let us check that (IV) holds. For this, statement (II) is the contrapositive of the assertion that $\Ic(0) = \infty$ implies 
	\[\P(\lambda_t(X_0, s_0) \leq 0) = 1\]
for all $(X_0, s_0) \in PM$ and $t > 0$. If Assumption \ref{ass:nondegenFTLE2} holds, then $\P(\lambda_t(X_0, s_0) = 0) = 0$ for all $(X_0, s_0)$ and $t$, from which (IV) follows. 

It remains to prove statement (II). We will require two preliminary results. Below, for $ (X, s) \in PM$ we write ${\bf P}_{(X, s)}$ for the law of the process $(X_\bullet, s_\bullet) := (X_t, s_t)_{t \in [0,1]}$ in the path space $C([0,1], PM)$. In particular, observe that ${\bf P}_{(X, s)}$ is a probability measure on the set of $(X_\bullet, s_\bullet) \in C([0,1], PM)$ for which $(X_0, s_0) = (X, s)$. Throughout, the path space $C([0,1], PM)$ has the uniform topology. 

The following is a consequence of strong uniqueness and nonexplosion of the SDE defining $(X_t, s_t)$. 

\begin{lem}\label{lem:weakStarContSamplePath}
	Assume either of the settings (1) or (2). Then, it holds that $(X, s) \mapsto {\bf P}_{(X, s)}$ is weak-$*$ continuous. 
\end{lem}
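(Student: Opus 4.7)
The plan is to obtain weak-$*$ continuity of $(X,s)\mapsto \mathbf{P}_{(X,s)}$ by upgrading pathwise continuous dependence of strong solutions on initial data to convergence in probability in the uniform topology on $C([0,1], PM)$, and then to invoke the standard fact that convergence of random variables in probability implies weak convergence of their laws. Fix a sequence $(X_0^n, s_0^n)\to (X_0, s_0)$ in $PM$. I would work on a single probability space carrying one collection of driving Brownian motions $(W_t^j)_{j=1}^m$ and define $(X_t^n, s_t^n)_{t\in [0,1]}$ and $(X_t, s_t)_{t\in [0,1]}$ as the corresponding strong solutions to the SDE for $(X_t, s_t)$ obtained from~\eqref{eq:SDE_gen}--\eqref{eq:projective_process} (equivalently, the Stratonovich SDE converted to It\^o form, which keeps the coefficients smooth because the $f_j$ are smooth).

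Step 1: \emph{Strong existence, pathwise uniqueness, and non-explosion on $[0,1]$.} In setting (1) compactness of $PM$ gives this for free; in setting (2), Assumption~\ref{ass:drift} together with boundedness of the projective fiber $P^{d-1}$ gives a Lyapunov function on $PM$ under which $\mathbf{E}_{(X,s)}[W(X_t)]$ stays finite uniformly in $t\in [0,1]$ and uniformly for $(X,s)$ in compact subsets of $PM$. This prevents explosion and, combined with standard comparison/Markov inequality arguments, gives the pathwise estimate
\[\mathbf{P}\Bigl(\sup_{t\in [0,1]} |X_t^n| > R\Bigr) \to 0 \quad \text{as } R\to \infty,\]
uniformly in $n$ for large $n$.

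Step 2: \emph{Pathwise continuous dependence on initial data.} Stopping $X_t^n$ and $X_t$ at the first exit time of a large ball $B_R$ and applying Gronwall's inequality to the It\^o form of the SDE (with its smooth drift and diffusion coefficients, which are locally Lipschitz), one obtains
\[\mathbf{E}\Bigl[\sup_{t\in [0,1]} d\bigl((X_t^n, s_t^n),(X_t, s_t)\bigr)^2 \mathbf{1}_{\{\tau_R^n \wedge \tau_R > 1\}}\Bigr]\lesssim_R d\bigl((X_0^n, s_0^n),(X_0, s_0)\bigr)^2,\]
where $d$ is a fixed Riemannian distance on $PM$ (on an ambient compact chart for setting (1); on Euclidean $\mathbb{R}^d\times P^{d-1}$ for setting (2)) and $\tau_R^n, \tau_R$ are the first exit times. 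Combining with Step 1 and letting $R\to\infty$ after $n\to\infty$, one concludes
\[\sup_{t\in [0,1]} d\bigl((X_t^n, s_t^n),(X_t, s_t)\bigr) \longrightarrow 0 \quad \text{in probability.}\]

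Step 3: \emph{Conclusion.} Convergence in probability of the paths in $C([0,1], PM)$ endowed with the uniform topology yields convergence in distribution, i.e.\ weak-$*$ convergence of the laws $\mathbf{P}_{(X_0^n, s_0^n)} \to \mathbf{P}_{(X_0, s_0)}$ on $C([0,1], PM)$. Since the sequence was arbitrary, $(X,s)\mapsto \mathbf{P}_{(X,s)}$ is weak-$*$ continuous. The main obstacle is Step 2 in setting (2), where one must quantify the probability of large excursions of $X_t^n$ uniformly in $n$ so that the localized Gronwall estimate can be globalized; this is handled via the drift condition~\eqref{eq:driftCond} and the moment bounds it produces on $[0,1]$.
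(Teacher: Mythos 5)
Your proposal is correct and follows essentially the same route as the paper: a synchronous coupling of the two solutions driven by the same Brownian motions, a Gronwall/It\^o-isometry estimate on the pathwise distance (done globally in the compact case after embedding $PM$ in $\R^N$ with Lipschitz extensions, and localized via exit times controlled by the drift/Lyapunov condition in the noncompact case), followed by passing from the pathwise estimate to weak convergence of the laws on $C([0,1],PM)$. The only cosmetic difference is that the paper tests directly against Lipschitz functionals and invokes Portmanteau, whereas you phrase the last step as convergence in probability implying convergence in distribution; these are interchangeable.
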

A proof sketch is included in Appendix \ref{app:weakStarCty}. 

Given a set $S \subset PM$ let \[\underline{S} = \{ (X_\bullet, s_\bullet) \in C([0,1], PM) : (X_t, s_t) \in S \text{ for all } t \in [0,1]\} \,, \] and for $t \in [0,1]$ fixed let \[S_{(t)} = \{ (X_\bullet, s_\bullet) \in C([0,1], PM) : (X_t, s_t) \in S\} \,. \] Observe that if $S$ is open (resp. closed) then $\underline{S}$ and $S_{(t)}$ are open (resp. closed) in $C([0,1], PM)$. 

\begin{prop}
	Assume ${\bf P}_{(X, s)}$ is fully supported in $C([0,1], PM)$ for all $(X, s) \in PM$. Let $U \subset PM$ be an open set and $K \subset U$ a compact set with nonempty interior. Then, 
	\[\inf_{(X, s) \in K} {\bf P}_{(X, s)} (\underline{U} \cap K_{(1)}) > 0 \,. \]
\end{prop}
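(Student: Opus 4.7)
The strategy is to replace the closed target set $K_{(1)}$ by an open one, convert the problem into a lower semi-continuity question for a measure-valued functional, and combine this with compactness of $K$.

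First I would define
\[ W := \underline{U} \cap \operatorname{int}(K)_{(1)} \,. \]
Both $\underline{U}$ and $\operatorname{int}(K)_{(1)}$ are open in $C([0,1], PM)$ by the observation stated just before the proposition, so $W$ is open. Since $\operatorname{int}(K) \subset K$, we have $W \subset \underline{U} \cap K_{(1)}$, and it suffices to show $\inf_{(X,s) \in K} {\bf P}_{(X,s)}(W) > 0$.

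The key observation is that $\Phi(X,s) := {\bf P}_{(X,s)}(W)$ is lower semi-continuous. Indeed, by Lemma~\ref{lem:weakStarContSamplePath} the map $(X,s) \mapsto {\bf P}_{(X,s)}$ is weak-$\ast$ continuous, so the portmanteau theorem applied to the open set $W$ yields $\liminf_{(X',s') \to (X,s)} \Phi(X',s') \geq \Phi(X,s)$. A lower semi-continuous function on a compact set attains its infimum, so once I verify pointwise positivity of $\Phi$ on $K$, the uniform positive lower bound will follow immediately.

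For pointwise positivity, it suffices to produce, for each $(X,s) \in K$, a single continuous path $\gamma : [0,1] \to PM$ with $\gamma(0) = (X,s)$, $\gamma([0,1]) \subset U$, and $\gamma(1) \in \operatorname{int}(K)$. Any such $\gamma$ lies in the open set $W$, hence has a neighborhood in $W$ of positive ${\bf P}_{(X,s)}$-measure by the full support assumption. When $(X,s) \in \operatorname{int}(K)$, the constant path $\gamma \equiv (X,s)$ suffices. For a general $(X,s) \in K$, one fixes a point $z_0 \in \operatorname{int}(K)$ and connects $(X,s)$ to $z_0$ by a continuous path staying inside $U$, which is possible provided $(X,s)$ lies in the path-component of $U$ containing $z_0$.

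The topological step of constructing these connecting paths is the main obstacle I foresee. For wild choices of $K$ this could conceivably fail---for instance if $K$ has a connected component with empty interior disjoint from the relevant component of $U$---but in the applications of this paper, the natural choices of $K \subset U$ are connected (e.g., closed balls inside open neighborhoods in the state spaces of the pitchfork and the 2d toy model), and the construction of $\gamma$ is immediate. With pointwise positivity in hand, the lower semi-continuity plus compactness argument yields the claim.
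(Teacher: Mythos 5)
Your proposal is correct and follows essentially the same route as the paper: replace $K_{(1)}$ by an open target (the paper uses an arbitrary nonempty open $U'\subset K$ where you use $\operatorname{int}(K)$), deduce lower semi-continuity of $(X,s)\mapsto {\bf P}_{(X,s)}$ of the resulting open set from Lemma~\ref{lem:weakStarContSamplePath} via portmanteau, and combine compactness of $K$ with the full-support hypothesis. The connectivity point you flag (needing a path from $(X,s)$ through $U$ into the open target so that the open set actually meets the support of ${\bf P}_{(X,s)}$, which lives in $C_{(X,s)}([0,1],PM)$) is a real subtlety that the paper's proof passes over silently, so your extra care there is warranted rather than a defect.
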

\begin{proof}
	To start, fix a nonempty open subset $U' \subset K$ and observe that $\underline{U} \cap U'_{(1)} \subset C([0,1], M)$ is open. Now, by Lemma \ref{lem:weakStarContSamplePath} it holds that $(X, s) \mapsto {\bf P}_{(X, s)}(\underline{U} \cap U'_{(1)})$ is lower semi-continuous. It is a standard fact that lower semicontinuous functions achieve their minima along compact sets, hence 
	\[\inf_{(X, s) \in K} {\bf P}_{(X, s)} (\underline{U} \cap U_{(1)}') = P_{z_*} (\underline{U} \cap U_{(1)}')\]
	for some $(X_*, s_*) \in K$. By the hypothesis, ${\bf P}_{(X_*, s_*)}$ is fully supported in $C([0,1], PM)$ such that 
	\[\inf_{(X, s) \in K} {\bf P}_{(X, s)}(\underline{U} \cap K_{(1)}) \geq \inf_{(X, s) \in K} {\bf P}_{(X, s)}(\underline{U} \cap U'_{(1)}) = {\bf P}_{(X_*, s_*)}(\underline{U} \cap U_{(1)}') > 0 \,, \]
	which completes the proof.
\end{proof}

We now turn to the proof of statement (II) above. In the proof below, we write $(\Omega, \Fc, \P)$ for the probability space on which our SDEs are posed. Without loss, it will suffice to work with 
\[\Omega = C([0,\infty) , \R^m) \, , \quad \Fc = \operatorname{Bor}(C([0,\infty) , \R^m)) \, , \]
where $C([0,\infty), \R^m)$ has the compact-open topology, and taking $\mathbb{P}$ as standard Wiener measure. With $(t, \omega) \mapsto W^j_t(\omega), 1 \leq j \leq m$ the time-$t$ coordinate functions on $\Omega$, we write $\Fc_{s, t} = \sigma(W^j_\tau - W^j_s : \tau \in [s,t], 1 \leq j \leq m)$ for the $\sigma$-algebra of events depending on increments between $s$ and $t$, and $\Fc_t := \Fc_{0, t}$.  Finally, we write $\theta^t : \Omega \to \Omega$ for the measure-preserving flow given by the time shift, i.e., 
\[
W_s^j(\theta^t \omega) = W_{s+t}^j(\omega) - W_t^j(\omega) \,.\]

\begin{proof}[Proof of (II)]
	Assertion (II) clearly implies $\{ Q > 0\}$ is nonempty. Fix $U$ open, $K$ compact, and $c > 0$ such that (i) $K \subset U \subset \{ Q \geq c\}$ and (ii) $K$ has nonempty interior. Set 
	\[c_K = \inf_{(X, s) \in K} {\bf  P}_{(X, s)} (\underline{U} \cap K_{(1)}) > 0 \,. \]
	For fixed initial $(X_0, s_0) \in K$, define 
	\[\Ec_{(X_0, s_0)} = \{ (X_t, s_t) \in U \text{ for all } t \in [0,1] \, , \text{ and } (X_1, s_1) \in K \} \subset \Omega \, , \]
	and note (i) $\Ec_{(X_0, s_0)} \in \Fc_{0,1}$ and (ii) it holds that $\P(\Ec_{(X_0, s_0)}) \geq c > 0$ for all $(X_0, s_0) \in K$. Now, for integer $n \geq 1$ it holds that 
	\[
		\{ \lambda_n(X_0, s_0) \geq c\}  \supset \Ec^{(n)}_{(X_0, s_0)},
	\] 
	where 
	\begin{align*} \Ec^{(n)}_{(X_0, s_0)} & := \bigcap_{m = 0}^{n-1} \theta^{-m} \Ec_{(X_m, s_m)}  \\ 
		& = \{ (X_t, s_t) \in U \text{ for all } t \in [0,n] \, , \text{ and } (X_m, s_m) \in K \text{ for all } m \in [1,n] \cap \Z \} \, . 
	\end{align*}
	Note that $\Ec^{(n)}_{(X_0, s_0)}$ is $\Fc_n$-measurable for all $(X_0, s_0)$. 

	Now, 
	\begin{align*}
		\P(\Ec^{(n)}_{(X_0, s_0)}) & = \E \left[ \E \left(  {\bf 1}_{\Ec^{(n-1)}_{(X_0, s_0)}} {\bf 1}_{\theta^{-{(n-1)}} \Ec^{(1)}_{(X_{n-1}, s_{n-1})}} \bigg| \Fc_{n-1} \right)  \right] \\ 
		& = \E \left[ {\bf 1}_{\Ec^{(n-1)}_{(X_0, s_0)}} \E \left(   {\bf 1}_{\theta^{-{(n-1)}} \Ec^{(1)}_{(X_{n-1}, s_{n-1})}} \bigg| \Fc_{n-1} \right)  \right] \\ 
		& = \E \left[ {\bf 1}_{\Ec^{(n-1)}_{(X_0, s_0)}} {\bf P}_{(X_{n-1}, s_{n-1})} (\underline{U} \cap K_{(1)})  \right] \\ 
		& \geq  c_K \P(\Ec^{(n-1)}_{(X_0, s_0)})
	\end{align*}
	where above ${\bf 1}_{E}$ denotes the indicator function of a set $E \subset \Omega$, and we have used the Markov property in passing from the second to the third lines. Bootstrapping, we conclude 
	\[\P \{ \lambda_n(X_0, s_0) \geq c\} \geq c_K^n \, . \]
	From this and the characterization in Theorem \ref{thm:LDPs}(a) (or Theorem \ref{thm:noncompactLDP}(b)), it follows that $\Ic(0) \leq - \log c_K < \infty$. 
\end{proof}

\subsubsection*{Proof of Proposition \ref{prop:LDPsignChange}}

To prove Proposition \ref{prop:LDPsignChange}(a), we need to show the following two points under Assumption \ref{ass:nondegenFTLE2}. 

\begin{itemize}
	\item[(V)] $\{ Q > 0\}$ empty implies $\P(\lambda_t(X_0, s_0) < 0) = 1$ for all $(X_0, s_0) \in PM$ and for all $t > 0$. 
	\item[(VI)] $\{ Q > 0\}$ nonempty implies $\P(\lambda_t(X_0, s_0) > 0) > 0$ for some $(X_0, s_0) \in PM$ and for some $t > 0$.  
\end{itemize}

It is evident that $\{ Q > 0\}$ empty implies $\P(\lambda_t(X_0, s_0) \leq 0) = 1$ for all $t > 0, (X_0, s_0) \in PM$, and point (V) follows on imposing condition (G) to rule out zero FTLE at the fixed time $t$. For (VI), fix $(X_0, s_0) \in \{ Q > 0\}$, and observe that positivity of $\lambda_t(X_0, s_0)$ follows on taking $t$ sufficiently small so that $(X_\tau, s_\tau) \in \{ Q > 0\}$ for all $\tau \in [0,t]$. From this, it is clear that $\lambda_t(X_0, s_0) > 0$ has positive probability for all $t$ sufficiently small. Note that (VI) did not use Assumption \ref{ass:nondegenFTLE2}. 

For Proposition \ref{prop:LDPsignChange}(b), we will check that 

\begin{itemize}
	\item[(VII)] if $\P(\lambda_t(X_0, s_0) < 0) = 1$ for all $(X_0, s_0) \in PM$ and for all $t > 0$, then $\{ Q > 0\}$ is empty; and 
	\item[(VIII)] if $\P(\lambda_t(X_0, s_0) > 0) > 0$ for some $t > 0, (X_0, s_0) \in PM$, then $\{ Q > 0\}$ is nonempty. 
\end{itemize}

 To start, the contrapositive of (VI) states that if $\P(\lambda_t(X_0, s_0) \leq 0) = 1$ for all $t > 0$, then $\{ Q > 0\}$ is empty. This statement is clearly stronger than (VII), hence (VII) follows. Finally, we note that statement (VIII) is evident from the integral form of $\lambda_t$.

\section{Two worked examples}
  \label{sec:test_problems}

We pivot in this and the next section to a treatment with computer-assisted proof of the large deviations rate functions for finite-time Lyapunov exponents associated to two concrete models: in Section \ref{sec:pitchfork} the pitchfork bifurcation discussed in Section \ref{sec:intro} and in Section \ref{sec:ToyModel} a 2d linear SDE serving as a toy model of shear-induced chaos. Considerations involving computer-assisted work are deferred to Section \ref{sec:CAP}. 


\subsection{Pitchfork bifurcation with additive noise}
\label{sec:pitchfork}


Our first test problem is an additively-forced version of the standard form of a pitchfork bifurcation in one dimension: 
\begin{gather}
	\label{eq:pitchfork}
	\begin{gathered}
		\rmd X_t = - U'(X_t) \rmd t + \sigma \rmd W_t, \ X_0 \in \mathbb{R} \, , \\ 
		\text{ where } U(x) := \frac14 x^4 - \frac12 \alpha x^2 \, ,  
	\end{gathered}
\end{gather}
for $\alpha \in \mathbb{R}$ and $\sigma >0$. As shown in \cite{Callawayetal}, this family of SDE admits an FTLE transition (Definition \ref{def:random_bif}). 
Our aim is to quantify this in terms of large deviations principles (see Corollary~\ref{cor:LDP}) as stated in Theorem~\ref{thm:main}. Note that this theorem applies here as Assumption~\ref{ass:gradient} and, by that, Corollary~\ref{cor:gradient} are clearly satisfied. As explained in Section~\ref{sec:prelim}, the rate function $\mathcal{I}_\alpha$ appearing in the large deviations principles is obtained via the moment Lyapunov exponent $\Lambda_\alpha(p)$ obtained from the spectral theory of the twisted semigroups $S^t_{p}$ on $C_W$, where $W = e^{\theta U}$ and $\theta \in (0,1)$ is arbitrary.

Our computer-assisted estimate of $\Lambda(p)$ in the next section will rely on identifying it as the dominant eigenvalue of the tilted generator $L_p$ (see \eqref{eq:tilted_generator_general_prelim}), which in this case can be written as 
\begin{equation}
	\label{eq:generator_pitchfork}
	L_{p} g(x) = \frac{\sigma^2}{2} \partial_{xx} g(x) + (\alpha x - x^3) \partial_x g(x) +   p (\alpha - 3x^2) g(x) \,. 
\end{equation} 

To start, we observe that $L_p$ is (formally) self-adjoint as an operator on $L^2(\rho)$, where $\rho = Z^{-1} e^{- 2 U / \sigma^2}$ is the stationary density for \eqref{eq:pitchfork}  and satisfies $L^*\rho = 0$. Here, $L^2(\rho)$ is the weighted $L^2$ space with inner product
\[\langle f, g \rangle_\rho = \int \overline f g ~ \rho \, \rmd x \, . \] 

For computational reasons it will be more convenient to work with a flat $L^2$ space. To this end, the isometry $L^2(\rho) \to L^2$ given by
\[ g \mapsto \rho^{1/2} g \]
conjugates
$L_p$ to an operator $\tilde H_p$ of Schr\"odinger type on $L^2$ with the same spectrum as $L_p$:
\begin{equation}
	\label{eq:Schroedinger_pitchfork}
	\tilde H_p f(x) = \frac{\sigma^2}{2} \partial_{xx} f(x) - V_p(x) f(x) \, , 
\end{equation}
where 
\[
V_p(x):= \left(  \frac{(x^3 - \alpha x)^2}{2 \sigma^2} - (3 x^2 - \alpha) \left( \frac{1}{2} - p \right) \right) \,. 
\]
For convenience and to conform to standard notation, we primarily work with $H_p = - \tilde H_p$, especially in Section \ref{sec:CAP}. 
Throughout, we view $\tilde H_p$ and $H_p$ as unbounded operators on flat $L^2$ with domain $\mathcal D (\tilde H_p) \supset C_c^\infty(\R)$. Since the potential $V_p$ blows up at infinity, it is standard that $H_p$ (hence $\tilde H_p$ and $L_p$) is (i) essentially self-adjoint\footnote{We recall that an unbounded, densely defined and symmetric operator is said to be essentially
self-adjoint if its closure is self-adjoint.} and (ii) has purely discrete spectrum \cite{BerShu12}.

Thus, our method in the next section will be to estimate the dominant eigenvalue of $\tilde H_p$ viewed as an operator on $L^2$. However, it is not clear a priori that the spectrum of $L_p$ on $L^2(\rho)$ (hence that of $\tilde H_p$ on $L^2$) has anything to do with the twisted semigroup $(S^t_p)$ acting on $C_W$ as
\begin{equation*}
S^t_p f(x) = \mathbb{E}_{x} \left[ f(X_t) \exp \left\{p\left( \int_0^t  (\alpha - 3 X_\tau^2) \rmd \tau \right)\right\}  \right]  \,. 
\end{equation*} 
We address this discrepancy with the following lemma.
	
\begin{lem} \label{lem:spectraCoincide4}
	The dominant eigenvalue of $L_p$ on $L^2(\rho)$ coincides with the moment Lyapunov exponent $\Lambda(p)$.  
\end{lem}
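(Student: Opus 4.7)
The plan is to produce a strictly positive eigenfunction $\psi^*$ of $L_p$ in $L^2(\rho)$ whose eigenvalue $\Lambda^*$ is the top of $\spec(L_p)$, verify that $\psi^* \in C_W$, and then appeal to uniqueness of the dominant eigenfunction of $S^t_p$ on $C_W$ from Theorem~\ref{thm:noncompactLDP} to identify $\Lambda^* = \Lambda(p)$.

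First, since the Schr\"odinger operator $H_p$ on $L^2(\R)$ has confining potential $V_p(x) \sim x^6/(2\sigma^2)$ at infinity, it is essentially self-adjoint on $C_c^\infty(\R)$ with purely discrete spectrum bounded below (\cite{BerShu12}). Standard Perron-Frobenius theory for Schr\"odinger operators then ensures that the bottom eigenvalue $-\Lambda^*$ of $H_p$ is simple, with strictly positive ground state $f^* \in L^2(\R)$. Undoing the conjugation $f \mapsto Z^{1/2}\, e^{U/\sigma^2} f$ yields a smooth, strictly positive $\psi^* \in L^2(\rho)$ satisfying $L_p \psi^* = \Lambda^* \psi^*$, with $\Lambda^*$ the simple maximum of $\spec(L_p)$ on $L^2(\rho)$.

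Second, I would check that $\psi^* \in C_W$. Agmon/WKB decay estimates for ground states of Schr\"odinger operators with super-quadratic potentials yield $f^*(x) \leq C e^{-|x|^4/(4\sigma^2)}$ at infinity, up to subexponential factors. Since $e^{U(x)/\sigma^2} = e^{x^4/(4\sigma^2) - \alpha x^2/(2\sigma^2)}$, the quartic terms cancel and $\psi^*(x)$ is pointwise bounded by $e^{-\alpha x^2/(2\sigma^2)}$ (times polynomial corrections). Dividing by $W = e^{\theta U}$ for $\theta \in (0,1)$ gives $\psi^*/W \to 0$ at infinity, so $\psi^* \in C_W$.

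Third, the self-adjoint semigroup $e^{tL_p}$ on $L^2(\rho)$ acts on $\psi^*$ by $e^{tL_p}\psi^* = e^{t\Lambda^*}\psi^*$. For initial data in $C_W \cap L^2(\rho)$, uniqueness of classical solutions of $\partial_t u = L_p u$ in a suitable weighted-growth class identifies $e^{tL_p}\psi^*$ pointwise with the Feynman-Kac expression $S^t_p\psi^*$. Hence $\psi^*$ is a strictly positive eigenfunction of $S^t_p$ on $C_W$ with eigenvalue $e^{t\Lambda^*}$. By Theorem~\ref{thm:noncompactLDP}, $e^{t\Lambda(p)}$ is the simple, isolated, dominant eigenvalue of $S^t_p$ on $C_W$, and by Krein-Rutman its Perron eigenvector is unique up to scaling in the positive cone. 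Therefore $\Lambda^* = \Lambda(p)$.

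I expect the main obstacle to be the second step: obtaining pointwise decay on $\psi^*$ sharp enough to place it in $C_W$ uniformly in $\theta \in (0,1)$. This requires Agmon-type estimates for Schr\"odinger ground states with polynomially growing potential, together with standard elliptic regularity to upgrade $L^2$ membership to pointwise bounds. Steps one and three are then soft spectral-theoretic arguments using self-adjoint functional calculus and the Perron-Frobenius uniqueness furnished by Theorem~\ref{thm:noncompactLDP}.
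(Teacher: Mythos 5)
Your proposal takes a genuinely different route from the paper. The paper never isolates the ground state: it proves that the Feynman--Kac semigroup $S^t_p$ on $C_W$ and the $L^2(\rho)$-semigroup generated by $L_p$ \emph{coincide on $C_W$} (Lemma~\ref{coincide}, via Feynman--Kac on $C^\infty_c$ plus mollification and truncation), and then invokes an abstract fact (Lemma~\ref{lem:specRelationGeneral4}) that two compact operators agreeing on a continuously embedded dense subspace have the same spectral radius, together with the spectral mapping theorem. Your approach instead constructs the positive ground state $\psi^*$ of $L_p$, places it in $C_W$, and identifies its eigenvalue with $\Lambda(p)$ by a Perron--Frobenius uniqueness argument. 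This buys a more concrete picture of the eigenfunction, at the price of needing quantitative decay estimates that the paper's softer argument avoids entirely.

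Three points need repair. First, your decay estimate is quantitatively wrong: the Agmon/WKB exponent for the ground state of $H_p$ is $\frac{1}{\sigma}\int_0^{|x|}\sqrt{2V_p}$, and since $\sqrt{(y^3-\alpha y)^2}=y^3-\alpha y$ for large $y$, this exponent is $U(x)/\sigma^2 + O(\log|x|)$, not $x^4/(4\sigma^2)$ alone. The $-\alpha x^2/(2\sigma^2)$ you claim for $\psi^*=\rho^{-1/2}f^*$ is exactly cancelled, and $\psi^*$ is in fact only polynomially bounded at infinity. Your conclusion $\psi^*\in C_W$ survives because $W=e^{\theta U}$ grows superpolynomially, but the intermediate Gaussian bound is false and should not be relied on. Second, the identification $e^{tL_p}\psi^* = S^t_p\psi^*$ is not a soft step: it is precisely where the paper invests most of its effort (Lemma~\ref{coincide}), because the Feynman--Kac formula is only immediate for compactly supported data and must be extended to $C_W$ by approximation, or alternatively justified by an It\^o/martingale argument with explicit moment bounds on $\exp(p\int_0^t(\alpha-3X_\tau^2)\,\rmd\tau)$. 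Deferring this to ``uniqueness of classical solutions in a suitable weighted-growth class'' leaves the crux unproved. Third, your final step uses that \emph{only} the dominant eigenvalue of $S^t_p$ on $C_W$ can possess a strictly positive eigenfunction; Theorem~\ref{thm:noncompactLDP} gives simplicity and dominance of $e^{t\Lambda(p)}$ but not this converse, which requires irreducibility --- concretely, that the dual eigenmeasure $\nu_p$ has full support, so that $\langle \nu_p,\psi^*\rangle>0$ forces the eigenvalues to agree. That extra argument is available under Assumption~\ref{ass:projStat} but must be supplied.
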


Before giving the proof of Lemma \ref{lem:spectraCoincide4}, we observe that $C_W \subset L^2(\rho)$ for $\theta$ small enough:
let us fix $\theta < \frac{1}{\sigma^2}$, so that for all $f \in C_W$ it holds that 
    \begin{equation}
    \label{eq:CWinL2}
     \|f\|^2_{L^2(\rho)} \leq C \sup_{x\in\R} \Big(\frac{f(x)}{W(x)}\Big)^2 \int_\R e^{U(x) (2\theta-\frac{2}{\sigma^2}) } ~ \rmd x \leq C' \| f\|_{C_W}^2 \,, 
     \end{equation}
	where $C, C' > 0$ are constants. Thus, it holds that $C_W \subset L^2(\rho)$ and that $\| \cdot\|_{L^2(\rho)} \leq C'' \| \cdot\|_{C_W}$, where $C'' = \sqrt{C'}$.
	
	Furthermore, for the proof of Lemma~\ref{lem:spectraCoincide4}, we	record the following preliminary lemmas.

\begin{lem}
	\label{lem:semigroupToGenerator_pitchfork}
		The tilted generator\footnote{Technically, it is the closure of $L_p$ that generates a $C^0$ semigroup. Note that $L_p$ automatically admits a unique closure since it is essentially self-adjoint and densely defined, c.f. \cite[Proposition X.1.6]{conway2019course}.} $L_p$~\eqref{eq:generator_pitchfork}, 
		treated as an operator on $L^2(\rho)$,
		generates a $C_0$-semigroup $\tilde S_p^t$ on $L^2(\rho)$.
\end{lem}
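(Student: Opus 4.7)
The plan is to leverage the essential self-adjointness of $L_p$ on $L^2(\rho)$ (already recorded in the excerpt via conjugation to the Schr\"odinger operator $\tilde H_p$) together with boundedness from above, after which the spectral theorem delivers the semigroup directly through functional calculus. Strong continuity and identification of the generator then follow by standard dominated-convergence arguments on spectral integrals.

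First I would transfer the essential self-adjointness of $\tilde H_p$ on $L^2(\R)$ back to $L^2(\rho)$ through the unitary $U : L^2(\rho) \to L^2(\R)$ defined by $Ug = \rho^{1/2} g$. Since $U$ is an isometric isomorphism mapping $C_c^\infty(\R)$ bijectively to itself (as $\rho^{1/2}$ is smooth and strictly positive), $L_p$ is essentially self-adjoint on $C_c^\infty(\R) \subset L^2(\rho)$, and its closure $\overline{L_p}$ is self-adjoint. Next I would observe that $\overline{L_p}$ is bounded above: $V_p$ is continuous on $\R$ with $V_p(x) \sim x^6/(2\sigma^2)$ as $|x|\to\infty$, so $V_p \geq -M$ for some $M>0$; consequently $H_p = -\tilde H_p \geq -M$, and by unitary equivalence $\sigma(\overline{L_p}) = \sigma(\tilde H_p) \subset (-\infty, M]$.

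With self-adjointness and a spectral upper bound in hand, the spectral theorem allows the definition
\[\tilde S_p^t := e^{t\overline{L_p}} = \int_{(-\infty, M]} e^{t\lambda}\,\rmd E_\lambda,\]
where $\{E_\lambda\}$ is the spectral resolution of $\overline{L_p}$. The bound $\|\tilde S_p^t\|_{L^2(\rho) \to L^2(\rho)} \leq e^{tM}$ and the semigroup identity $\tilde S_p^{t+s} = \tilde S_p^t \tilde S_p^s$ are immediate from functional calculus. Strong continuity $\tilde S_p^t f \to f$ in $L^2(\rho)$ as $t \to 0^+$ follows by applying dominated convergence to the spectral integral against $\rmd \langle E_\lambda f, f\rangle$, using $|e^{t\lambda} - 1|^2 \leq (1+e^{tM})^2$ on the support of the spectral measure; the same argument identifies $\overline{L_p}$ as the infinitesimal generator. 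I do not foresee a substantive obstacle here: the only nontrivial input is the essential self-adjointness of $\tilde H_p$, which is classical for Schr\"odinger operators with smooth confining potentials and is already invoked in the text preceding the lemma.
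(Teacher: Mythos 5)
Your proposal is correct and follows essentially the same route as the paper: reduce to the unitarily equivalent Schr\"odinger operator $\tilde H_p$, use essential self-adjointness together with boundedness from above (since $V_p$ is continuous and coercive, hence bounded below), and conclude generation of a $C_0$-semigroup. The only difference is cosmetic: where you carry out the spectral-calculus construction of $e^{t\overline{L_p}}$ by hand, the paper simply cites \cite[Example 3.26]{EngelNagel}, which is precisely that statement for self-adjoint operators bounded above.
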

\begin{proof}
	It suffices to prove that $\tilde{H}_p$ as in~\eqref{eq:Schroedinger_pitchfork} generates a $C^0$ semigroup on the flat $L^2$ space.  
	To start, we observe that there exists a constant $\tilde{c}>0$ such that $\tilde{H}_p + \tilde{c}\text{Id}$ is \emph{bounded from above} in the sense that 
	\[\langle f, (\tilde H_p + \tilde{c}) f \rangle \leq a \| f \|_{L^2}^2\]
	for some uniform constant $a > 0$ and for all $f \in \mathcal D (\tilde{H}_p)$.  Since $\tilde{H}_p$ is also essentially self-adjoint, it follows from \cite[Example 3.26]{EngelNagel} that $\tilde H_p$ generates a strongly-continuous semigroup on $L^2$. 
\end{proof}
	
	\begin{lem}\label{coincide}
		Assume that $\theta < \frac{1}{\sigma^2}$ such that $C_W \subset L^2(\rho)$. 
		Then the semigroups  $(S^t_p)_{t\geq 0}$ and $(\tilde{S}^t_p)_{t\geq 0}$ coincide on $C_W$. 
	\end{lem}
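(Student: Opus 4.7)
The strategy is to identify both semigroups with solutions of the same abstract Cauchy problem $\partial_t u = L_p u$ in $L^2(\rho)$, first on a convenient dense core, and then extend to all of $C_W$ by an approximation argument that exploits both the $C_W$-boundedness of $S^t_p$ (hypothesis of this section) and the continuous embedding $C_W \hookrightarrow L^2(\rho)$ established in \eqref{eq:CWinL2}.

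Concretely, I would first fix a test function $f \in C_c^\infty(\R)$. Such an $f$ lies in $C_W \subset L^2(\rho)$ and, since $L_p f$ is also smooth with compact support, belongs to $\mathcal D(L_p)$ (viewed in $L^2(\rho)$). Feynman--Kac together with Itô's formula then shows that $u(t,x) := S^t_p f(x)$ is smooth in $(t,x)$, solves $\partial_t u = L_p u$ pointwise, and satisfies $u(0,\cdot) = f$. To promote this to an $L^2(\rho)$-valued classical solution, I would use the uniform bound $\|S^t_p g\|_{C_W} \leq M_t \|g\|_{C_W}$ (locally bounded in $t$) combined with \eqref{eq:CWinL2} to see that $t \mapsto S^t_p f$ is locally bounded in $L^2(\rho)$; strong continuity and differentiability in $L^2(\rho)$ then follow from pointwise smoothness via dominated convergence, with a multiple of $W$ serving as an $L^2(\rho)$-integrable majorant. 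Applying the same reasoning to $L_p f$ in place of $f$ (which is also $C_c^\infty$) handles the time derivative.

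At this point, both $t \mapsto S^t_p f$ and $t \mapsto \tilde S^t_p f$ are $C^1([0,\infty), L^2(\rho))$ solutions of the abstract Cauchy problem associated with $L_p$ on $L^2(\rho)$, taking values in $\mathcal D(L_p)$, with initial datum $f$. Since $\tilde S^t_p$ is a $C_0$-semigroup generated (by Lemma~\ref{lem:semigroupToGenerator_pitchfork}) by (the closure of) $L_p$, the standard uniqueness result for such Cauchy problems \cite{EngelNagel} forces $S^t_p f = \tilde S^t_p f$ for every $f \in C_c^\infty(\R)$.

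Finally, to extend the coincidence from $C_c^\infty$ to all of $C_W$, I would take a generic $f \in C_W$ and approximate it by cutoffs $f_n \in C_c^\infty$ with $|f_n| \leq |f|$ pointwise and $f_n \to f$ pointwise. Because $|f| \leq \|f\|_{C_W} W$ and $W \in L^2(\rho)$ (this is where the restriction $\theta < 1/\sigma^2$ enters, via \eqref{eq:CWinL2}), dominated convergence gives $f_n \to f$ in $L^2(\rho)$, hence $\tilde S^t_p f_n \to \tilde S^t_p f$ in $L^2(\rho)$. On the probabilistic side, the Feynman--Kac representation of $S^t_p$, together with the finiteness of $\mathbb{E}_x\bigl[W(X_t)\exp\bigl(p\int_0^t(\alpha-3X_s^2)\,\rmd s\bigr)\bigr]$ (which is exactly the $C_W$-bound applied to $W$), allows dominated convergence inside the expectation, giving $S^t_p f_n(x) \to S^t_p f(x)$ for every $x$. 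Combining the two along an a.e.-convergent subsequence of $\tilde S^t_p f_n$ yields $S^t_p f = \tilde S^t_p f$ a.e., hence as elements of $L^2(\rho)$, which is the claim. The main delicate point is Step~2 above: verifying that the pointwise Feynman--Kac representation really produces a classical $L^2(\rho)$-valued trajectory in $\mathcal D(L_p)$, which is where the interplay between the $C_W$ and $L^2(\rho)$ norms is essential.
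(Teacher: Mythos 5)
Your proposal follows essentially the same route as the paper's proof: coincidence on $C_c^\infty$ via the Feynman--Kac formula (the paper simply cites~\cite[Theorem 8.2.1]{Oksendal} where you spell out the abstract-Cauchy-problem uniqueness argument), then extension to all of $C_W$ by approximation, with dominated convergence on the probabilistic side, $L^2(\rho)$-convergence on the semigroup side, and passage to an a.e.-convergent subsequence. The only small wrinkle is that multiplying a merely continuous $f \in C_W$ by smooth cutoffs does not produce $C_c^\infty$ functions, so an extra mollification stage is needed (the paper inserts an intermediate step through $C_c^0$ and uses the bound $|\varphi_\varepsilon| \leq |\varphi| + 1$ in place of your $|f_n| \leq |f|$); this is a routine fix and does not affect the argument.
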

	\begin{proof}
		The proof relies on the following steps.
		\begin{enumerate}
			\item First of all we establish that $(S^t_p)_{t\geq 0}$ and $(\tilde{S}^t_p)_{t\geq 0}$ coincide along \(C^\infty_c\), i.e.
			\[
			S^t_p \varphi (x) = \tilde{S}^t_p \varphi(x) \quad \text{for Leb. a.e. } x \in \mathbb{R}.
			\]
			This follows directly from the Feynman-Kac formula~\cite[Theorem 8.2.1]{Oksendal}, applied to $L_p$ as a backward Kolmogorov operator.
			Note that there  exists, in particular, a unique continuous extension of $\tilde S^t_p \varphi$ (which is a priori only defined in $L^2(\rho)$, i.e.~almost everywhere).
			\item The next step is to check that $(S^t_p)_{t\geq 0}$ and $(\tilde{S}^t_p)_{t\geq 0}$ coincide on $C^0_c(\R)$.~To this aim we fix $\varphi\in C^0_c(\R)$ and convolve it with a smooth mollifier, i.e. $\varphi_\varepsilon:=\varphi\star \eta_\varepsilon$, where $\eta$ is a smooth density and $\eta_\varepsilon=\varepsilon^{-1}\eta(\varepsilon^{-1}x)$. Since $\varphi$ is uniformly continuous, we have that for all $\varepsilon>0$ sufficiently small $|\varphi_\varepsilon|\leq |\varphi|+1$. Consequently, by the dominated convergence theorem we obtain that $S^t_p \varphi_\varepsilon\to S^t_p\varphi$ pointwise. Moreover, by the continuity of the translation operators in $L^2(\rho)$ it holds that $\varphi_\varepsilon\to \varphi$ in $L^2(\rho)$ implying that $\tilde{S}^t_p\varphi_\varepsilon \to \tilde{S}^t_p\varphi$ in $L^2(\rho)$. 
The convergence $\tilde{S}^t_p\varphi_\varepsilon \to \tilde{S}^t_p\varphi$ in $L^2(\rho)$ implies by Chebyshev's inequality the convergence in probability of $\tilde{S}^t_p\varphi_\varepsilon \to \tilde{S}^t_p\varphi$ (with respect to $\rho(x)\rmd x$). Therefore there exists a sequence $(\varepsilon_k)_{k\geq 0}$ such that $\tilde{S}^t_p\varphi_{\varepsilon_k}(x) \to \tilde{S}^t_p\varphi(x)$ holds Lebesgue almost surely as $\varepsilon_k\to 0$.~In conclusion $\tilde{S}^t_p\varphi=S^t_p\varphi$ Lebesgue almost surely for all $\varphi\in C^0_c(\R)$. 
			\item We now proceed to show that $S^t_p \varphi = \tilde{S}^t_p \varphi$ for all $\varphi \in C_W$. To this aim we employ a truncation argument. For a function $\varphi \in C_W$, we let $\varphi_n := \chi_n \varphi$, where $(\chi_n)$ is a sequence of smooth functions with $0 \leq \chi_n \leq 1$ such that 
			\[\chi_n (x) = \begin{cases}
			1 & x \in [-n, n] \\ 
			0 & x \notin [-(n+1), n+1].
			\end{cases}\]
			Obviously $\chi_n \to 1$ pointwise on $\mathbb{R}$ as $n\to\infty$. As in step 2, we obtain by the dominated convergence theorem  that $S^t_p \varphi_n \to S^t_p \varphi$ pointwise. Since $\varphi \in L^2(\rho)$, we have, again by dominated convergence, that $\varphi_n \to \varphi$ in $L^2(\rho)$ which implies that $\tilde{S}^t_p \varphi_n \to \tilde{S}^t_p \varphi$  in $L^2(\rho)$. 
Again, as in step 2, the convergence of $\tilde{S}^t_p \varphi_n \to \tilde{S}^t_p \varphi$ implies by Chebyshev's inequality the convergence in probability (with respect to $\rho(x)\rmd x$). Therefore it follows that there exists a sequence $(n_k)_{k\geq 0}$ such that  \(\tilde{S}^t_p \varphi_{n_k}(x) \to \tilde{S}^t_p \varphi(x)\) holds Lebesgue almost surely as $n_k\to\infty$.
In conclusion \(S^t_p \varphi = \tilde{S}^t_p \varphi\) holds pointwise almost surely for $\varphi\in C_W$, and, in particular, $S^t_p \varphi$ is the unique continuous extension of  \(\tilde{S}^t_p \varphi\).
		\end{enumerate}
		This finishes the proof.
	\end{proof}

In addition, we use the following general result, the proof of which is given in Appendix \ref{app:proofSpecRelation}. 
	\begin{lem}\label{lem:specRelationGeneral4}
		Let $(\Bc, |\cdot|)$ be a Banach space, and let $\Bc_0 \subset \Bc$ be a dense subspace equipped with its own norm $|\cdot|_0$ with respect to which $(\Bc_0, |\cdot|_0)$ is also Banach, and for which $|x| \leq C |x|_0$ for all $x \in \Bc_0$ and for some $C > 0$. 
		
		Let $A : \Bc \to \Bc$ be a bounded linear operator for which $A(\Bc_0) \subset \Bc_0$ and $A_0 := A|_{\Bc_0} : (\Bc_0, |\cdot|_0) \circlearrowleft$ is a bounded linear operator. Finally, assume $A : (\Bc, |\cdot|) \circlearrowleft$ and $A_0 : (\Bc_0, |\cdot|_0) \circlearrowleft$ are both compact on their respective spaces. Then, 
		\[\rho(A : \Bc \to \Bc) = \rho(A_0 : \Bc_0 \to \Bc_0) \, , \]
		i.e., the spectral radius of $A$ regarded on $\Bc$ coincides with that of $A_0 = A|_{\Bc_0}$ regarded on $\Bc_0$. 
	\end{lem}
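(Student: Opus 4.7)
The plan is to prove the sharper statement $\sigma(A) \setminus \{0\} = \sigma(A_0) \setminus \{0\}$, from which equality of the spectral radii follows at once: for a compact operator $T$ on a Banach space, $\rho(T)$ is either $0$ or the modulus of a nonzero element of $\sigma(T)$ (which is an isolated eigenvalue of finite multiplicity), so both radii are determined by this common set.

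The easy inclusion $\sigma(A_0) \setminus \{0\} \subseteq \sigma(A) \setminus \{0\}$ is immediate: any nonzero $\mu \in \sigma(A_0)$ is an eigenvalue of $A_0$ with eigenvector $x \in \Bc_0$, $x \neq 0$. Since $|\cdot|$ is a norm on $\Bc$, the inclusion $\Bc_0 \hookrightarrow \Bc$ is injective and $x$ is nonzero as an element of $\Bc$; as $A$ agrees with $A_0$ on $\Bc_0$, we get $A x = \mu x$ in $\Bc$, so $\mu \in \sigma(A)$ and in particular $\rho(A_0) \leq \rho(A)$.

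For the reverse inclusion, fix $\lambda \in \sigma(A) \setminus \{0\}$, which is an isolated eigenvalue. Consider the Riesz projection
\[
P_\lambda := \frac{1}{2\pi i} \oint_\Gamma (zI - A)^{-1} \, dz,
\]
where $\Gamma \subset \rho(A)$ is a small positively-oriented circle around $\lambda$ enclosing no other point of $\sigma(A)$. By standard Riesz theory of compact operators, $P_\lambda$ is a nonzero bounded projection on $\Bc$ with range the finite-dimensional generalized $\lambda$-eigenspace. Suppose for contradiction that $\lambda \notin \sigma(A_0)$; shrinking $\Gamma$ if necessary, we may arrange that $\Gamma$ together with its enclosed disc lies in $\rho(A_0)$ while still $\Gamma \subset \rho(A)$. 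For $z \in \rho(A) \cap \rho(A_0)$ and $y \in \Bc_0$, the element $x := (zI - A_0)^{-1} y \in \Bc_0 \subset \Bc$ satisfies $(zI - A) x = (zI - A_0) x = y$ in $\Bc$ (using $A|_{\Bc_0} = A_0$), and therefore $(zI - A)^{-1} y = (zI - A_0)^{-1} y$. Hence for every $y \in \Bc_0$,
\[
P_\lambda y = \frac{1}{2\pi i} \oint_\Gamma (zI - A_0)^{-1} y \, dz = 0
\]
by the vector-valued Cauchy theorem applied to the $\Bc_0$-valued analytic function $z \mapsto (zI - A_0)^{-1} y$ on a neighborhood of the closed disc bounded by $\Gamma$. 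Since $P_\lambda$ is bounded on $\Bc$ and vanishes on the dense subspace $\Bc_0$, we conclude $P_\lambda \equiv 0$, contradicting the fact that $P_\lambda$ has nonzero rank. Hence $\lambda \in \sigma(A_0)$, completing the proof.

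The main obstacle is this reverse inclusion: density of $\Bc_0$ in $\Bc$ does not obviously upgrade a $\Bc$-eigenvector of $A$ to a $\Bc_0$-eigenvector, because an approximating sequence in $\Bc_0$ need not be bounded in the (possibly strictly stronger) norm $|\cdot|_0$, and so compactness of $A_0$ is not directly usable to extract a convergent subsequence in $\Bc_0$. The Riesz projection together with the identity $(zI - A)^{-1}|_{\Bc_0} = (zI - A_0)^{-1}$ neatly circumvents this, reducing the question to an analyticity statement for the resolvent on $\Bc_0$.
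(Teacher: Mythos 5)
Your proof is correct, but it takes a genuinely different route from the one in the paper for the hard direction. The paper also begins with the observation that eigenvectors of $A_0$ are eigenvectors of $A$ (giving $\rho(A_0)\leq\rho(A)$), but for the reverse inequality it argues at the level of spectral radii only: it uses the Riesz decomposition at the peripheral spectrum to find an open dense set of $f\in\Bc$ with $|A^nf|^{1/n}\to\rho(A)$, picks such an $f_0$ in the dense subspace $\Bc_0$, and then bounds $|A^nf_0|\leq C\,\|A_0^n\|_0\,|f_0|_0$ to conclude $\rho(A)\leq\rho(A_0)$ via Gelfand's formula. You instead prove the stronger statement $\sigma(A)\setminus\{0\}=\sigma(A_0)\setminus\{0\}$ by showing that the Riesz projection $P_\lambda$ of $A$ at an isolated nonzero eigenvalue $\lambda$ would vanish on the dense subspace $\Bc_0$ (via the identity $(zI-A)^{-1}|_{\Bc_0}=(zI-A_0)^{-1}$ and the vector-valued Cauchy theorem) if $\lambda\notin\sigma(A_0)$, hence would vanish identically, contradicting its nonzero finite rank. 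Both arguments rest on the Riesz theory of compact operators; yours yields more (equality of the nonzero spectra, hence in particular of all peripheral eigenvalues), at the cost of invoking the holomorphic functional calculus and, implicitly, complex scalars for the contour integral --- a point the paper's Gelfand-formula argument also glosses over, so neither proof is disadvantaged there. All the delicate steps in your argument check out: the resolvent identity on $\Bc_0$, the choice of a contour whose closed disc lies in $\rho(A_0)$ while the circle lies in $\rho(A)$, and the compatibility of the $\Bc_0$-valued and $\Bc$-valued integrals under the continuous embedding.
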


\begin{proof}[Proof of Lemma \ref{lem:spectraCoincide4}]
	Let $\tilde S^t_p : L^2(\rho) \circlearrowleft$ denote the $C^0$ semigroup generated by $L_p$. By the spectral mapping theorem in this context, 
	\[\spec(\tilde{S}^t_p) \setminus \{ 0 \} = e^{t \spec(L_p)}  \]
	for all $t > 0$. Thus, to prove Lemma \ref{lem:spectraCoincide4}, it suffices to check, for some fixed $t > 0$, that the spectral radius of $\tilde S^t_p$ on $L^2(\rho)$ coincides with that of $S^t_p$ on $C_W$. 

	For this, we will check the hypotheses of Lemma \ref{lem:specRelationGeneral4}:
fixing $\theta < \frac{1}{\sigma^2}$, we have that $C_W \subset L^2(\rho)$ by~\eqref{eq:CWinL2}.
	The compactness of $S^t_p$ on $C_W$ follows\footnote{In \cite{FStoltz20}, they establish compactness of $S^t_p$ in the space $B^\infty_W$ of measurable functions for which the $C_W$-norm is finite. Compactness of $S^t_p$ in $C_W$ follows from closedness of $C_W \subset B^\infty_W$ in the $C_W$-norm.} along the lines of \cite[Lemma 6.6]{FStoltz20}, while compactness of $\tilde S^t_p$ on $L^2(\rho)$ follows from well-known results in the theory of Schr\"odinger operators \cite[Thm. XIII.16]{reed1978iv}. 
Due to Lemma~\ref{coincide} the operators $\tilde S^t_p $ and $ S^t_p$ coincide on $C_W$ such that Lemma \ref{lem:specRelationGeneral4} applies. 
We conclude that $\Lambda(p)$ is the dominant eigenvalue of $L_p$, or equivalently, that of $\tilde H_p$.
\end{proof}

\begin{rmk}\ 

	\begin{itemize}
		\item[(1)] The above symmetrization procedure applies to  FTLE of any scalar gradient SDE 
		\[\rmd X_t = - U'(X_t) \rmd t + \sigma \rmd W_t \, , \]
		for which the tilted generator $L_p$ on $L^2(\rho)$, $\rho = Z^{-1} \exp(- 2 U / \sigma^2)$, will read
		\[L_p g = \frac{\sigma^2}{2} \partial_{xx} g - U' \partial_x g - p U'' g,\]
		and for which the symmetrization $\tilde H_p$ on $L^2$ reads as 
		\[\tilde H_p f =  \frac{\sigma^2}{2} \partial_{xx} f + \left(\left(\frac12 - p\right) U''  - \frac{(U')^2}{2 \sigma^2} \right)f \,.  \]
		For further details on the symmetrization procedure for tilted semigroups associated to gradient SDE, see \cite[Section IV.C]{Touchette}. 
		Under reasonable conditions on the potential $U$, this allows to use the computer-assisted scheme in Section \ref{sec:homotopy} to enclose the corresponding rate function $\mathcal{I}$ for FTLE for this class of SDE. 
		
		\item[(2)] We caution, however, that this symmetrization procedure cannot generally be applied to FTLE of gradient SDE in dimension $\geq 2$, as there is no reason a priori to expect to be able to symmetrize the generator for the projective process $(X_t, s_t)$ on $P \R^d$. 
	\end{itemize}

\end{rmk}

\subsection{A two-dimensional toy model}
\label{sec:ToyModel}


We consider the following linear stochastic differential equation
\begin{equation} \label{eq:specvarEqusim_new_v2}
	\rmd Y_t = A Y_t \,\rmd t + B Y_t \circ \rmd W_t\,, \ A= \begin{pmatrix}
		- \alpha & 0 \\ b & - \alpha
	\end{pmatrix}\,, \ B = \begin{pmatrix}
		0 & \sigma \\ - \sigma & 0
	\end{pmatrix}, \ Y_0 \in \mathbb{R}^2,
\end{equation}
where $\alpha > 0$ is the linear contraction parameter, $b > 0$ represents some shearing, $W_t$ is a one-dimensional Brownian motion and $\sigma >0$ expresses noise intensity. 


A related model for shear-induced chaos \cite{LinYoung} was investigated in \cite{EngelLambRasmussen}. More recently, it was shown that the linearization of this model arises as a scaling limit for the linearization of the Hopf normal form with additive noise \cite{ChemnitzEngel}. There are, however, two key differences between the linearizations considered in \cite{ChemnitzEngel, EngelLambRasmussen} and the linear model~\eqref{eq:specvarEqusim_new_v2}: 
\begin{itemize}
	\item[(1)] The entry $A_{2,2} = - \alpha$ replaces $0$ from the model in \cite{EngelLambRasmussen} to make it so that a transition from uniform to nonuniform synchronization takes place and is easier to describe. 
    \item[(2)] For technical reasons we have replaced a zero entry in $B$ by $B_{2,1} = - \sigma$ to make the diffusion for the projective process uniformly elliptic instead of hypoelliptic. This greatly simplifies the computer-assisted work in Section \ref{sec:CAP_Toy}. See Section \ref{sec:outlook} for a brief additional discussion on this point. 
\end{itemize}


Trivially, the assumptions of Section~\ref{subsec:compact2} are satisfied here such that Theorems~\ref{thm:mlp} and~\ref{thm:LDPs} hold.
As discussed in Section~\ref{sec:prelim}, we introduce the process $s_t = Y_t/ \|Y_t\| \in \mathbb S^{1}$, which can be described on the projective space ${P}^{1}$ by identifying $s_t = - s_t$.
The dynamics are given by (cf.~\eqref{eq:projective_process})
\begin{align*}
	\rmd s &= ( As - \langle s, As \rangle s)\, \rmd t +  (Bs - \langle s, B s \rangle s)\, \circ \rmd W_t \\
	&= \begin{pmatrix}
		- \alpha s_1 - s_1(-\alpha s_1^2 - \alpha s_2^2 + b s_1 s_2) \\ \alpha s_2 + b s_1 - s_2(-\alpha s_1^2 - \alpha s_2^2 + b s_1 s_2)
	\end{pmatrix} \rmd t + \sigma \begin{pmatrix}
		s_2  \\
		- s_1
	\end{pmatrix} \circ \rmd W_t\,.
\end{align*}
Above, for notational clarity we have dropped the `$t$' dependence for $s = s_t$, $s_i = s_{i,t}$, which should be read implicitly throughout. 

Note that, since $B$ is antisymmetric, condition~\eqref{eq:forceByIsom3} is satisfied, i.e.~$q_j = 0$ for all $j\geq 1$, and the Furstenberg-Khasminskii functional $Q$ is given by
\begin{align*}
	Q &= \langle s, As \rangle + \frac{1}{2} \langle \left( B + B^{\top} \right)s, B s \rangle - \langle s, B s \rangle^2\\
	&= - \alpha s_1^2 - \alpha s_2^2 + b s_1 s_2 + 0 = - \alpha + b s_1 s_2.
\end{align*}
In particular, $Q=q_0$ in this case.
By the change of variables to $s = (\cos \phi, \sin \phi)$, the SDE determining the dynamics of the angular component $ \phi \in [0,  \pi)$ on the one-dimensional projective space, based on~\cite[Section 2]{ImkellerLederer}, reads as
\begin{equation} \label{eq:Phi_model2_v2}
	\rmd \phi = - \frac{1}{\sin \phi} \rmd s_1 =  b \cos^2 \phi \rmd t -  \sigma  \circ \rmd W_t, \ \phi_0 \in [0, \pi)\,.
\end{equation}
We can now express the asymptotic Lyapunov exponent by the Furstenberg-Khasminskii formula~\eqref{eq:FK_general}
\begin{equation} \label{eq:Lyap_newmodel_v2}
	\lambda = \int_{ [0,\pi]} q_0(\phi) \eta(\phi) \rmd \phi,
\end{equation}
where $\eta = \frac{d \nu}{d \varphi}$ is the density of the stationary measure $\nu$ for $(\varphi_t)$ and solves the stationary Fokker-Planck equation $L^*\eta =0$; and $q_0(\phi) = - \alpha  + b \cos \phi \sin \phi$.
Observe that the finite time Lyapunov exponents are given by
\begin{equation}
	\label{eq:FTLEs_testproblem}
	\lambda_t(\phi_0) = \frac{1}{t} \int_0^t q_0(\phi_\tau) \rmd \tau,
\end{equation}
and in particular are uniformly negative as long as $b < 2 \alpha$. The values of $q_0$ become sign-indefinite when $b$ crosses $2 \alpha$, and so by Proposition~\ref{prop:LDPsignChange} (a) an FTLE transition occurs here. 

For computation of the moment Lyapunov exponents we consider the tilted generator 
\begin{equation} \label{eq:tilted_gen_model2_v2}
	L_p f(\phi) = \frac{1}{2} \sigma^2 \partial^2_{\phi} f(\phi)+ b \cos^2 \phi \partial_{\phi} f(\phi) 
	+ p \left( b \cos \phi \sin \phi - \alpha  \right) f(\phi),
\end{equation}
with periodic boundary conditions on $[0, \pi]$. Indeed, we recall that the moment Lyapunov exponents $\Lambda(p)$ are exactly the maximal eigenvalues of $L_p$.

\begin{lem}\label{lem:semigroupToGenerator_shear}
	For system~\eqref{eq:specvarEqusim_new_v2}, the moment Lyapunov exponent $\Lambda(p)$ is the principal eigenvalue of the tilted generator $L_p$ as given in~\eqref{eq:tilted_gen_model2_v2}, treated as a closed linear operator on $\ell^1(\Z, \C)$ (see Section \ref{sec:CAP_Toy} for definition and details). That is, $\Lambda(p)$ is a simple, isolated eigenvalue of $L_p$ and has real part strictly larger than that of the rest of $\spec(L_p)$. 
\end{lem}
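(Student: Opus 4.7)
The plan is to invoke Theorem~\ref{thm:mlp} applied to the autonomous projective diffusion $\phi_t$ on the compact circle $P^1$, and then to transfer the resulting spectral information from $C(P^1)$ to $\ell^1(\Z,\C)$ via Fourier regularity. Because the SDE~\eqref{eq:specvarEqusim_new_v2} is linear, the angular process $\phi_t$ solving~\eqref{eq:Phi_model2_v2} is an autonomous, uniformly elliptic diffusion with smooth periodic coefficients on $P^1$, so Assumption~\ref{ass:projStat} is trivially satisfied. Applying Theorem~\ref{thm:mlp}(iii) in this compact setting yields, for each $p\in\R$ and $t>0$, a simple dominant eigenvalue $e^{t\Lambda(p)}$ of $S^t_p$ on $C(P^1)$, with spectral gap and strictly positive continuous eigenfunction $\psi_p$. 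By elliptic regularity for $L_p$, which has smooth coefficients, $\psi_p$ is in fact smooth, hence its Fourier coefficients lie in $\ell^1(\Z,\C)$.

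To transfer to $\ell^1(\Z,\C)$, we identify $\pi$-periodic functions with their Fourier coefficients in the basis $e^{2in\phi}$, giving a continuous embedding $\ell^1(\Z,\C) \hookrightarrow C(P^1)$ with $\|f\|_{C^0}\leq\|f\|_{\ell^1}$ and trigonometric polynomials dense in both spaces. On the Fourier side the operator $L_p$ in~\eqref{eq:tilted_gen_model2_v2} becomes the unbounded diagonal symbol $-2\sigma^2 n^2$ plus first- and zero-order contributions given by convolutions with finitely-supported sequences (all coefficient functions in $L_p$ being trigonometric polynomials), so $L_p$ is closable on $\ell^1(\Z,\C)$ with dense domain. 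The dominant parabolic symbol generates a trace-class smoothing semigroup, and Duhamel's formula then upgrades $S^t_p$ to a compact bounded operator on $\ell^1(\Z,\C)$ that leaves $\ell^1$ invariant; in fact $S^t_p$ maps $\ell^1$-data into functions whose Fourier coefficients decay faster than any polynomial. Lemma~\ref{lem:specRelationGeneral4}, applied with $A=S^t_p$, $\Bc=C(P^1)$ and $\Bc_0=\ell^1(\Z,\C)$, then shows that the spectral radii on the two spaces coincide, namely $e^{t\Lambda(p)}$.

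Putting this together, $\psi_p$ lies in $\ell^1(\Z,\C)$ and satisfies $S^t_p\psi_p=e^{t\Lambda(p)}\psi_p$, exhibiting $e^{t\Lambda(p)}$ as an eigenvalue of $S^t_p$ on $\ell^1$ equal to its spectral radius. Simplicity transfers via the embedding $\ell^1\hookrightarrow C(P^1)$, since any two linearly independent $\ell^1$-eigenfunctions would also be linearly independent in $C(P^1)$, contradicting simplicity there. Compactness of $S^t_p$ on $\ell^1$ ensures that its spectrum consists of $\{0\}$ together with at most countably many eigenvalues of finite multiplicity, and each such nonzero eigenvalue is then dominated strictly by $e^{t\Lambda(p)}$, again by the embedding and by dominance on $C(P^1)$. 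The corresponding statement for $L_p$ follows from the spectral mapping theorem together with compactness of the resolvent. I expect the main obstacle to be rigorously verifying invariance and compactness of $S^t_p$ on $\ell^1(\Z,\C)$ and the density/approximation properties needed to deploy Lemma~\ref{lem:specRelationGeneral4}; once these functional-analytic prerequisites are in place, the transfer of the eigenvalue, simplicity, and the spectral gap follows by soft arguments as above.
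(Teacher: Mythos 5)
Your proposal is correct in outline, but it takes a genuinely different and considerably longer route than the paper. The paper's proof is essentially a citation: it observes that $L_p = L + p\,q_0$ with $L$ the (uniformly elliptic, smooth-coefficient) generator of the compact projective diffusion~\eqref{eq:Phi_model2_v2}, and then invokes \cite[Theorem 1.29]{BaxendaleStroock88}, which directly delivers the simple, isolated principal eigenvalue with maximal real part for perturbations of this form. You instead rebuild the result from Theorem~\ref{thm:mlp}(iii) on $C(P^1)$ and transfer it to $\ell^1(\Z,\C)$ via the embedding $\ell^1\hookrightarrow C(P^1)$ and Lemma~\ref{lem:specRelationGeneral4} — exactly the strategy the paper uses for the pitchfork example (Lemma~\ref{lem:spectraCoincide4}), here adapted to the Fourier side. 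What your route buys is an explicit justification that the spectrum seen on $\ell^1(\Z,\C)$ (the space actually used in the computer-assisted argument of Section~\ref{sec:CAP_Toy}) agrees with the probabilistically meaningful one; the paper leaves that identification implicit in the citation. What it costs is a nontrivial amount of functional analysis that you correctly flag but do not carry out: (i) you must verify that the semigroup generated by $L_p$ on $\ell^1$ \emph{is the restriction} of the Feynman--Kac semigroup $S^t_p$ on $C(P^1)$ (Lemma~\ref{lem:specRelationGeneral4} requires $A_0=A|_{\Bc_0}$; compare the mollification/truncation argument of Lemma~\ref{coincide}); (ii) compactness of $S^t_p$ on $\ell^1$ needs the perturbation argument for analytic semigroups, since $b(1+\cos\phi)\partial_\phi$ is unbounded on $\ell^1$ and only relatively bounded with respect to $\partial_\phi^2$; and (iii) the passage from $S^t_p$ back to $L_p$ via the spectral mapping theorem is legitimate only because the semigroup is analytic/eventually compact — for general $C_0$-semigroups that step fails, so it should be stated. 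One further small point: simplicity means algebraic multiplicity one, so in the transfer step you should also rule out generalized eigenvectors in $\ell^1$; this follows by the same embedding argument applied to $(S^t_p - e^{t\Lambda(p)})^2v=0$. With these items filled in, your argument is a valid, self-contained alternative to the paper's reference-based proof.
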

\begin{proof}
	We observe that $L_p=L+pq_0$ where $L = L_0$ is the generator for~\eqref{eq:Phi_model2_v2}.
	Therefore, we can apply~\cite[Theorem 1.29]{BaxendaleStroock88} (as pointed out in the discussion on page 199 in \cite{BaxendaleStroock88}).
\end{proof}

\section{Rigorous enclosures for moment Lyapunov exponents}
\label{sec:CAP}


In this section, we derive explicit enclosures for the moment Lyapunov exponents $\Lambda(p)$ associated to the two problems described in Section~\ref{sec:test_problems}, using computer-assisted proofs. The obtained information allows us to give precise statements about the rate function $\mathcal{I}$, and also about the asymptotic variance of the Lyapunov exponent for the example of Section~\ref{sec:ToyModel}. 

It should be noted that each of the two examples presents different challenges when it comes to computer-assisted proofs. Regarding the problem of Section~\ref{sec:pitchfork}, the operator~\eqref{eq:Schroedinger_pitchfork} is defined on an unbounded domain, and contains unbounded terms, whereas many existing computer-assisted approaches are restricted to bounded domains. Here we take advantage of the fact that this problem has only real eigenvalues and can be recasted using a self-adjoint operator, which allows us to make use of the homotopy method~\cite[Chapter 10]{NakPluWat19}. With this approach, the fact that the potential in~\eqref{eq:Schroedinger_pitchfork} is unbounded turns out not to be an issue because eigenfunctions are localized. To the best of our knowledge, the only other existing computer-assisted results in the context of unbounded domains and unbounded potentials is~\cite{NagNakWak02}, which deals specifically with coupled harmonic oscillators, and the very recent~\cite{BreChu24}, which does not focus on eigenvalue problems. 

 The problem of Section~\ref{sec:ToyModel} is not self-adjoint (and cannot be made so because $L_p$ has complex eigenvalues), therefore the homotopy method cannot be used to directly enclose the eigenvalues of $L_p$. However, we can consider a zero-finding problem associated to finding an eigenpair (eigenvalue and eigenfunction) of $L_p$, and then use other well-established computer-assisted techniques in order to rigorously enclose a zero of this problem (see e.g.~\cite{BerLes15,NakPluWat19}). Because the operator~\eqref{eq:tilted_gen_model2_v2} is this time defined on a bounded domain, a wider breadth of computer-assisted techniques are available. Furthermore, we make use of the recent development in rigorous continuation techniques introduced in~\cite{Bre23} in order to get, not only enclosures for fixed $p$, but a rigorous enclosure of the map $p\mapsto\Lambda(p)$ over a compact interval $[p_-,p_+]$, in norm strong enough to also controls derivatives, which allows us to directly get an enclosure of the asymptotic variance $\Lambda''(0)$. We note that the homotopy method used in Section 5.1 is a more direct way of getting eigenvalue enclosures, but is does not lends itself so well to rigorous continuation, especially if one is interested in stronger than $L^\infty$ control with respect to the continuation parameter. This is why we do not compute $\Lambda''(0)$ in Section~\ref{sec:homotopy} but only in Section~\ref{sec:CAP_Toy}.
 
As is typical for computer-assisted proofs involving infinite-dimensional objects (here functions), most of the work consists in controlling truncation errors between the infinite dimensional problem we care about and the finite dimensional approximation used to do numerics. However, for some specific steps floating point errors also have to be controlled, which we do using interval arithmetic (see, e.g.,~\cite{Moo79,Tuc11}).

\subsection{Pitchfork bifurcation with additive noise}
\label{sec:homotopy}

The goal of this subsection is to provide an explicit and tight enclosure of the first eigenvalue of the operator $H_p = - \tilde H_p$, with $\tilde H_p$ as in \eqref{eq:Schroedinger_pitchfork}, and to study its dependency with respect $\alpha$. 
For a fixed value of $p$, we first use the so-called homotopy method~\cite[Chapter 10]{NakPluWat19} in order to get an explicit, tight and rigorous enclosure of the first eigenvalue $-\Lambda(p)$ of $H_p$. After getting several such enclosures for different values of $p$, an elementary convexity argument allows us to rigorously enclose the rate function $\Ic_\alpha$ at $0$.

\subsubsection{General strategy}

Let
\begin{align*}
V_p(x) = (\alpha - 3 x^2 ) \left( \frac{1}{2} - p \right) + \frac{(x^3 - \alpha x)^2}{2 \sigma^2} \, , 
\end{align*}
so that
\begin{align*}
H_p f(x) = -\frac{\sigma^2}{2} \partial_{xx} f(x) + V_p(x) f(x).
\end{align*}
We now explain how to get a rigorous enclosure of $\Lambda(p)$ for this problem.
Regarding $H_p$ as an operator on $L^2$ with (dense) domain $\Dc(H_p) = \{ f \in H^2(\R) : \, V_p f \in L^2(\R) \}$, it is known that $H_p$ is (i) an essentially self-adjoint operator\footnote{We recall that an unbounded, densely defined and symmetric operator is said to be essentially
self-adjoint if its closure is self-adjoint.}
; and (ii) that its (unique) self-adjoint extension has purely discrete spectrum (see, e.g.,~\cite{BerShu12}). 

In the sequel, $p$ is fixed and we drop the $p$-dependence from the notation for the moment. As we will require more than the first eigenvalue of $H$ along the way, we denote by $(\lambda_m)_{m\geq 0}$ the sequence of non-decreasing eigenvalues of $H$ counted with multiplicity. 


The Rayleigh-Ritz method gives us a straightforward way of getting rigorous and computable upper-bounds for a finite number of eigenvalues of $H$.

\begin{prop}{\cite[Theorem 10.12]{NakPluWat19}}
\label{prop:RR}
Let $f_0,\ldots,f_M$ in the domain of the self-adjoint extension of $H$ be linearly independent elements, let 
\begin{equation*}
A_0 = \left(\langle f_i,f_j \rangle\right)_{0\leq i,j\leq M} \qquad\text{and}\qquad A_1 = \left(\langle Hf_i,f_j \rangle\right)_{0\leq i,j\leq M},
\end{equation*}
and let
\begin{equation*}
\ol_0 \leq \ldots \leq \ol_M,
\end{equation*}
be the eigenvalues of the generalized eigenvalue problem
\begin{equation}
\label{eq:eig_RR}
A_1 v = \ol A_0 v.
\end{equation}
Then, for all $m\leq M$
\begin{equation*}
\l_m \leq \ol_m.
\end{equation*}
\end{prop}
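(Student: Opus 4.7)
The plan is to invoke the Courant--Fischer min-max characterization of eigenvalues for a self-adjoint operator bounded below with purely discrete spectrum; this applies to $H$ as recalled just above the statement. For each $m \geq 0$,
\begin{equation*}
\lambda_m \;=\; \min_{\substack{V \subset \Dc(H) \\ \dim V = m+1}} \; \max_{f \in V \setminus \{0\}} \frac{\langle H f, f \rangle}{\langle f, f \rangle}.
\end{equation*}

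The key reduction is to identify $\ol_m$ as a Rayleigh-quotient extremum on the specific finite-dimensional subspace $V_M := \mathrm{span}(f_0, \ldots, f_M) \subset \Dc(H)$, which is genuinely $(M+1)$-dimensional because the $f_i$ are assumed linearly independent. For any $f = \sum_{i=0}^M v_i f_i \in V_M$, bilinearity and symmetry of $H$ give $\langle f, f\rangle = v^\top A_0 v$ and $\langle H f, f \rangle = v^\top A_1 v$. Since $A_0$ is symmetric positive definite, the change of variables $w = A_0^{1/2} v$ turns the generalized eigenvalue problem~\eqref{eq:eig_RR} into the symmetric eigenvalue problem $A_0^{-1/2} A_1 A_0^{-1/2} w = \ol w$, to which the finite-dimensional min-max principle applies. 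Translating back via $f \leftrightarrow v$ yields
\begin{equation*}
\ol_m \;=\; \min_{\substack{V' \subset V_M \\ \dim V' = m+1}} \; \max_{f \in V' \setminus \{0\}} \frac{\langle H f, f \rangle}{\langle f, f \rangle}.
\end{equation*}

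Comparing the two min-max expressions, the minimum defining $\ol_m$ ranges over a strictly smaller family of trial subspaces (those contained in $V_M$) than the minimum defining $\lambda_m$ (all $(m+1)$-dimensional subspaces of $\Dc(H)$), so $\lambda_m \leq \ol_m$ for every $m \leq M$, which is the claim. The only subtlety worth isolating is the justification of the Courant--Fischer characterization in the unbounded setting: because $H$ is self-adjoint, bounded below, and has compact resolvent, its spectrum really is a non-decreasing sequence $\lambda_0 \leq \lambda_1 \leq \cdots \to \infty$ and the min-max identity follows from the spectral decomposition in the standard way, so no real obstacle arises.
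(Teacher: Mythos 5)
Your argument is correct: the Courant--Fischer min--max characterization (valid here since $H$ is self-adjoint, bounded below, with purely discrete spectrum), combined with the identification of the $\ol_m$ as the min--max values of the Rayleigh quotient restricted to subspaces of $\spn(f_0,\dots,f_M)$ via the positive-definite Gram matrix $A_0$, immediately gives $\lambda_m\leq\ol_m$. The paper does not prove this statement but quotes it from \cite[Theorem 10.12]{NakPluWat19}, remarking only that it generalizes the basic bound $\langle Hf,f\rangle\geq\lambda_0\langle f,f\rangle$; your proof is precisely the standard argument behind that citation, so there is nothing to object to.
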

\begin{rmk}
This result is nothing but a generalization to more than one eigenvalue of the basic estimate $\langle Hf,f \rangle \geq \lambda_0 \langle f,f \rangle$. The quality of the upper-bounds produced by Proposition~\ref{prop:RR} depends on the choice of the elements $f_i$. If we take them to be accurate approximations of the $M+1$ first eigenfunctions of $H$, we expect to get sharp upper-bounds.  

Provided the entries of the matrices $A_0$ and $A_1$ are computed with interval arithmetic, and the \emph{finite-dimensional} eigenvalue problem~\eqref{eq:eig_RR} is also solved rigorously with interval arithmetic (see, e.g.,~\cite{Rum99}), the $\barl_m$ are computable and rigorous upper-bounds for the eigenvalues $\lambda_m$.
\end{rmk}
Regarding lower-bounds, the Lehmann-Maehly method can be used, assuming some a priori knowledge on the next eigenvalue.
\begin{prop}{\cite[Theorem 10.14]{NakPluWat19}}
\label{prop:LM}
Repeat the assumptions of Proposition~\ref{prop:RR}. Assume further that there exists $\nu\in\R$ such that
\begin{equation}
\label{eq:cond_LM}
\ol_M <\nu \leq \l_{M+1},
\end{equation}
and define the matrices
\begin{equation*}
A_2 = \left(\langle Hf_i,Hf_j \rangle\right)_{0\leq i,j\leq M},\qquad B_1=A_1-\nu A_0 \quad\text{and}\quad B_2 = A_2 -2\nu A_1 + \nu^2 A_0.
\end{equation*}
Let
\begin{equation*}
\mu_0 \leq \ldots \leq \mu_M
\end{equation*}
be the eigenvalues of the generalized eigenvalue problem
\begin{equation}
\label{eq:eig_LM}
B_1 v = \mu B_2 v,
\end{equation}
and assume that $\mu_M<0$. Then, for all $m\leq M$
\begin{equation*}
\ul_m \leq \l_m,
\end{equation*}
where
\begin{equation*}
\ul_m := \nu + \frac{1}{\mu_{M-m}}.
\end{equation*}
\end{prop}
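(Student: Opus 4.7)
The plan is to reduce the generalized eigenvalue problem $B_1 v = \mu B_2 v$ to a Rayleigh-quotient characterization of eigenvalues of the resolvent $T := (H - \nu I)^{-1}$, and then apply the Courant--Fischer min-max principle. First assume the strict inequality $\nu < \l_{M+1}$ (the boundary case $\nu = \l_{M+1}$ is recovered by a limiting argument, using continuity of the finite-dimensional generalized eigenvalues). Together with $\ol_M < \nu$, this ensures that $H - \nu I$ has trivial kernel, so $T$ is a well-defined bounded, compact, self-adjoint operator on $L^2$, with eigenvalues $\{(\l_n - \nu)^{-1}\}_{n \geq 0}$ accumulating at $0$. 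Exactly $M+1$ of these are strictly negative, and sorted in ascending order they read $\tau_k = (\l_{M-k} - \nu)^{-1}$ for $k = 0, \ldots, M$.

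The next step is to establish the Rayleigh-quotient interpretation
\begin{equation*}
\mu_k \;=\; \min_{\substack{U \subset W_M \\ \dim U = k+1}} \ \max_{u \in U \setminus \{0\}} \frac{\langle Tu, u\rangle}{\|u\|^2}, \qquad W_M := (H - \nu I)\,\spn(f_0, \ldots, f_M).
\end{equation*}
The key algebraic step is the change of variables $g = \sum_i v_i f_i$, $u = (H - \nu I) g$: using self-adjointness of $H$ one computes $(B_2)_{ij} = \langle (H-\nu) f_i, (H-\nu) f_j\rangle$, so that $v^{\top} B_2 v = \|u\|^2$ and $v^{\top} B_1 v = \langle (H-\nu)g, g\rangle = \langle u, Tu\rangle$. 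The generalized Rayleigh quotient $(v^\top B_1 v)/(v^\top B_2 v)$ thus equals $\langle Tu, u\rangle/\|u\|^2$ as $u$ ranges over $W_M$. The hypothesis $\mu_M < 0$ guarantees $B_2$ is positive definite (equivalently $\spn(f_0, \ldots, f_M) \cap \ker(H - \nu I) = \{0\}$), so the generalized problem is well-posed and $W_M$ has dimension $M+1$.

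Now the Courant--Fischer principle applied to the compact self-adjoint operator $T$ on all of $L^2$ gives $\tau_k = \min_{\dim U = k+1} \max_{u \in U \setminus \{0\}} \langle Tu, u\rangle/\|u\|^2$, which is valid for each $k \leq M$ since $\tau_M < 0$ lies strictly below the essential spectrum $\{0\}$. Restricting the outer minimum to subspaces of $W_M \subset L^2$ can only increase the infimum, so $\mu_k \geq \tau_k = (\l_{M-k} - \nu)^{-1}$. Reindexing via $m = M - k$, and using that both $\mu_{M-m}$ and $\l_m - \nu$ are strictly negative (the former by hypothesis, the latter since $\l_m \leq \ol_m \leq \ol_M < \nu$ by Proposition~\ref{prop:RR}), inversion flips the sign to give $1/\mu_{M-m} \leq \l_m - \nu$, which rearranges to the claim $\ul_m = \nu + 1/\mu_{M-m} \leq \l_m$.

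The main obstacle is essentially bookkeeping: the ascending orderings of $\mu_k$ and of $\l_m$ induce opposite orderings on the negative numbers $(\l_m - \nu)^{-1}$, and these must be correctly matched via the reindexing $m \leftrightarrow M - k$. The remaining subtlety is the edge case $\nu = \l_{M+1}$, where $T$ ceases to be bounded; this is handled either by the limit $\nu \uparrow \l_{M+1}$ together with continuity of the finite-dimensional spectrum of the generalized problem, or by observing that the hypothesis $\mu_M < 0$ already rules out the configuration where $V_M$ intersects $\ker(H - \l_{M+1})$.
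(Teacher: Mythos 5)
Your proof is correct. The paper does not actually prove this proposition---it is quoted from \cite[Theorem 10.14]{NakPluWat19}, with only a remark sketching the $M=0$ case---but that remark indicates exactly your mechanism (the substitution $f=(H-\nu)g$ converting the resolvent bound into the quadratic form $B_2$), and your argument is the standard min--max proof of the Lehmann--Maehly bounds via the compact self-adjoint resolvent $(H-\nu)^{-1}$ restricted to the trial space, with the index reversal under inversion of negative quantities and the edge case $\nu=\lambda_{M+1}$ both handled correctly.
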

\begin{rmk}
Again, in the case $M=0$, this is nothing but the estimate $\langle (H-\nu)^{-1}f,f \rangle \geq (\lambda_1-\nu)^{-1} \langle f,f \rangle$, which, taking $f=(H-\nu)g$, yields $\langle (H-\nu)g,g \rangle \geq (\lambda_1-\nu)^{-1} \langle (H-\nu)g,(H-\nu)g \rangle$.
\end{rmk}
In order to use Proposition~\ref{prop:LM}, we need to find an explicit $\nu$ satisfying~\eqref{eq:cond_LM}. To that end, let us consider $a>0$, $b\in\R$ such that
\begin{align}
\label{eq:a_b_base_prblm}
V^{(0)}(x) := ax^2 + b \leq V(x)\quad \forall~x\in\R,
\end{align}
and the densely defined operator $H^{(0)}$ on $L^2(\R)$ given by
\begin{align*}
H^{(0)} f(x) = -\frac{\sigma^2}{2} \partial_{xx} f(x) + V^{(0)}(x) f(x).
\end{align*}
Denoting by $(\lambda^{(0)}_m)_{m\geq 0}$ the sequence of non-decreasing eigenvalues of $H^{(0)}$, we have that 
\begin{align}
\label{eq:mono_eig_0}
\lambda^{(0)}_m \leq \lambda_m\quad \forall~m\geq 0,
\end{align}
simply because, thanks to~\eqref{eq:a_b_base_prblm},
\begin{align*}
\langle Hf,f \rangle \geq \langle H^{(0)}f,f \rangle\quad \forall~f\in\Cc^\infty_c(\R).
\end{align*}
Moreover, the eigenvalues $\lambda^{(0)}_m$ are known explicitly, as $H^{(0)}$ is nothing but (a rescaled version of) the quantum harmonic oscillator, and we have
\begin{align*}
\lambda^{(0)}_m = \left(m+\frac{1}{2}\right)\sigma\sqrt{2a} + b \quad \forall~m\geq 0.
\end{align*}
In particular, if it happens that $\lambda^{(0)}_{M+1} >\ol_M$, then~\eqref{eq:mono_eig_0} ensures that any $\nu\in(\ol_M,\lambda^{(0)}_{M+1})$ satisfies~\eqref{eq:cond_LM}. 

However, the eigenvalues of $H^{(0)}$ might be \emph{too far away} from those of $H$, and it could be that $\lambda^{(0)}_{M+1} \leq \lambda_M$, in which case we cannot have $\lambda^{(0)}_{M+1} >\ol_M$. In order to make this procedure more general, we introduce a homotopy between $H^{(0)}$ and $H$, and consider
\begin{align*}
H^{(s)} := (1-s)H^{(0)} + sH \quad \forall~s\in[0,1].
\end{align*}
Denoting by $(\lambda^{(s)}_m)_{m\geq 0}$ the sequence of non-decreasing eigenvalues of $H^{(s)}$, we still have that 
\begin{align*}
\lambda^{(0)}_m \leq \lambda^{(s)}_m\quad \forall~m\geq 0,\ \forall~s\in[0,1],
\end{align*}
and this time there will be a positive $s_0\in(0,1]$ such that $\lambda^{(0)}_{M+1} > \lambda^{(s_0)}_{M}$. We can then apply Proposition~\ref{prop:RR} to $H^{(s_0)}$, and, if the obtained upper bound $\ol^{(s_0)}_M\geq \lambda^{(s_0)}_{M}$ is sharp enough so that $\lambda^{(0)}_{M+1} > \ol^{(s_0)}_M$, also use Proposition~\ref{prop:LM}  to obtain rigorous enclosure of the $M+1$ first eigenvalues of $H^{(s_0)}$. In particular, we do now have a rigorous lower bound $\ul^{(s_0)}_M \leq \lambda^{(s_0)}_{M}$ for the $M+1$-th eigenvalue of $H^{(s_0)}$, and can therefore repeat the whole procedure in order to get rigorous enclosures of the $M$ first eigenvalues of $H^{(s_1)}$ for some $s_1>s_0$. We keep iterating this process until we reach $s=1$, for which $H^{(1)} = H$. For further details, and generalizations of this homotopy method, we refer to~\cite[Section 10.2.4]{NakPluWat19}.

\subsubsection{Results}
\label{sec:pitchfork_results}

\begin{thm}
\label{th:single_sol_pitchfork}
Let $\alpha = 1$ and $\sigma = 1$ in~\eqref{eq:pitchfork}. The rate function satisfies\footnote{Below and throughout, decimals in the endpoints of intervals are bolded if the upper and lower limits at those values disagree, e.g., $[-.123\bm{45}, -.123\bm{32}]$.}
\begin{align*}
\Ic_1(0) = - \inf_{p \in \mathbb{R}}  \Lambda(p) \in [0.455\bm{1},0.455\bm{3}].
\end{align*}
\end{thm}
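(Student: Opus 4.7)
My plan is to exploit two facts: first, since $\Lambda(0)=0$, the Legendre--Fenchel transform gives $\Ic_1(0)=\sup_{p}(-\Lambda(p))=-\inf_{p\in\R}\Lambda(p)$, so the theorem reduces to enclosing $\inf_p \Lambda(p)$; second, by Theorem~\ref{thm:mlp}(iv), $\Lambda$ is convex and analytic, so pointwise enclosures of $\Lambda$ at a handful of well-chosen points can be promoted, via convexity, to a rigorous enclosure of the global infimum.

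Concretely, I would first do an exploratory (nonrigorous) numerical scan of $p\mapsto\Lambda(p)$ for $\alpha=\sigma=1$ to locate an approximate minimizer $p^\star$. (For this parameter regime one expects $p^\star>0$ since $\lambda=\Lambda'(0)<0$.) Then I would pick three points $p_1<p_2<p_3$ tightly bracketing $p^\star$, with $p_2\approx p^\star$ and $p_1,p_3$ just on either side. For each of these three $p$-values, I would apply the homotopy procedure of Section~\ref{sec:homotopy} (Propositions~\ref{prop:RR} and~\ref{prop:LM}) to the Schr\"odinger-type operator $H_p$ of~\eqref{eq:Schroedinger_pitchfork}, interval-arithmetically producing rigorous upper and lower bounds $\underline{\Lambda(p_i)}\le \Lambda(p_i)\le \overline{\Lambda(p_i)}$. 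By Lemma~\ref{lem:spectraCoincide4}, $-\Lambda(p_i)$ is exactly the first eigenvalue of $H_{p_i}$, so these are honest enclosures of $\Lambda(p_i)$.

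To extract the enclosure of $\inf_p\Lambda$ from this data, I would argue:
\begin{itemize}
\item \emph{Lower bound on $\Ic_1(0)$ (upper bound on $\inf\Lambda$).} This is the easy direction: $\inf_p\Lambda(p)\le\Lambda(p_2)\le\overline{\Lambda(p_2)}$, so $\Ic_1(0)\ge -\overline{\Lambda(p_2)}$. Choosing $p_2$ well, this should comfortably exceed $0.4551$.
\item \emph{Upper bound on $\Ic_1(0)$ (lower bound on $\inf\Lambda$).} This requires a \emph{global} lower bound on the convex function $\Lambda$. I would use the two chord slopes
\[
m_{12}=\frac{\Lambda(p_2)-\Lambda(p_1)}{p_2-p_1},\qquad m_{23}=\frac{\Lambda(p_3)-\Lambda(p_2)}{p_3-p_2},
\]
which by convexity satisfy $m_{12}\le \Lambda'(p_2)\le m_{23}$; if the enclosures are sharp enough one verifies $m_{12}<0<m_{23}$, localizing the argmin to $[p_1,p_3]$. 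Outside $[p_1,p_3]$, convexity pushes $\Lambda$ above $\Lambda(p_1)$ and $\Lambda(p_3)$ respectively. Inside, the gradient inequality with the endpoint derivatives yields
\[
\Lambda(p)\ge \Lambda(p_2)+m_{12}(p_3-p_2)\quad\text{on }[p_2,p_3],\qquad \Lambda(p)\ge \Lambda(p_2)-m_{23}(p_2-p_1)\quad\text{on }[p_1,p_2].
\]
Combining and taking rigorous outer enclosures in interval arithmetic gives a verified lower bound on $\inf_p\Lambda$, hence an upper bound on $\Ic_1(0)$.
\end{itemize}

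The main obstacle is not conceptual but quantitative: the target window has width $2\times 10^{-4}$, so both the pointwise eigenvalue enclosures from the homotopy method and the convexity-induced ``gap'' $\max(|m_{12}|(p_3-p_2),|m_{23}|(p_2-p_1))$ must be pushed below this tolerance. Sharpening the eigenvalue enclosure requires (i) using high-quality approximate eigenfunctions $f_0,\ldots,f_M$ in Proposition~\ref{prop:RR}--a natural choice is to take $f_i$ as Hermite-type basis functions adapted to the quadratic lower-bound potential $V^{(0)}$ from~\eqref{eq:a_b_base_prblm}, possibly multiplied by polynomial corrections--and (ii) running the homotopy in many small steps so that the Lehmann--Maehly lower bound of Proposition~\ref{prop:LM} can be iteratively propagated from $s=0$ to $s=1$ without losing precision. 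Shrinking the convexity gap requires placing $p_1$ and $p_3$ close to $p_2$, but not so close that the enclosures of the slopes $m_{ij}$ become dominated by the numerical uncertainty in the $\Lambda(p_i)$; one tunes $p_3-p_1$ so that the eigenvalue-enclosure error and the convexity-gap error are comparable. Once these are balanced, interval arithmetic delivers the claimed enclosure $[0.4551,0.4553]$.
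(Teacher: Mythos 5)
Your proposal follows essentially the same route as the paper: rigorous pointwise enclosures of $\Lambda(p)$ via the homotopy method (Propositions~\ref{prop:RR} and~\ref{prop:LM}) at three closely spaced values of $p$ bracketing the minimizer, convexity of $\Lambda$ (Theorem~\ref{thm:mlp}(iv)) to certify that the argmin lies in $[p_1,p_3]$, and then a bound on $\inf_p\Lambda$ over that interval. The one place you genuinely deviate is the final step: the paper obtains the lower bound on $\inf_p\Lambda$ by re-running the homotopy method with $p$ itself taken as an interval coefficient in $H_p$, i.e.\ it evaluates $\Lambda([p_1,p_3])$ directly, whereas you extract a lower bound purely from the three pointwise enclosures via the secant-slope/gradient inequality. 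Both are valid; your version avoids an interval-parameter eigenvalue computation but requires the pointwise enclosures to be tight enough that the chord slopes $m_{12},m_{23}$ are controlled, while the paper's version costs essentially nothing extra since interval arithmetic is already in use throughout the homotopy. Given that the paper's pointwise enclosures have width $\sim 10^{-11}$ over a bracket of width $2\times 10^{-5}$, your convexity gap would be on the order of $10^{-11}$, far below the $2\times 10^{-4}$ target window, so your variant is quantitatively feasible with the same data.
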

\begin{proof}
For any fixed $p$, we can obtain rigorous enclosures for the moment Lyapunov function $\Lambda(p)$ using the homotopy method described in Section~\ref{sec:homotopy}. In particular, we get
\begin{align*}
\Lambda(0.71646)&\in[-0.455268764\bm{21},-0.455268764\bm{19}], \\ \Lambda(0.71647)&\in[-0.455268764\bm{25},-0.455268764\bm{22}], \\ \Lambda(0.71648)&\in[-0.455268764\bm{19},-0.455268764\bm{16}].
\end{align*}  
These three enclosures, combined with the convexity of $\Lambda$ (Theorem~\ref{thm:mlp} $(iv)$), yield that the infimum of $\Lambda$ is reached in $[0.7164\bm{6},0.7164\bm{8}]$.\footnote{There is a subtlety here due to the fact that the computer stores floating-point values in base 2. 
To be precise, we show using interval arithmetic that the infimum of $\Lambda$ is reached in $[p_1,p_2]$, where $p_1$ is the largest floating point number (in the usual double precision) so that $p_1\leq 0.71646$ and $p_2$ is the smallest floating point number such that $0.71648\leq p_2$. It is then this interval $[p_1,p_2]$ on which we evaluate $\Lambda$ in order to enclose $\Ic(0)$.
}

Then, we evaluate $\Lambda$ on this interval $[0.7164\bm{6},0.7164\bm{8}]$ (this means $H_p$ is now an operator with interval coefficients, but that does not change anything to the homotopy method, for which interval arithmetic was already needed anyway), and the output provides the announced enclosure of $\inf_{p \in \mathbb{R}}  \Lambda(p)$.

The computational parts of the proof can be reproduced by running the Matlab code \texttt{script\_pitchfork\_single.m}, available at~\cite{BreGit23}, together with the Intlab toolbox~\cite{Rum99} for interval arithmetic.
\end{proof}

\begin{rmk}
\label{rem:tight_enclosures}
In principle, once an interval $[p_1,p_2]$ containing the minimizer of $\Lambda$ has been obtained, one can first subdivide this interval into smaller subintervals, evaluate $\Lambda$ on each subinterval separately, and then take the minimum of the obtained enclosures. This is more costly, but typically provides a sharper enclosure of $\Lambda([p_1,p_2])$.
\end{rmk}

There is of course nothing special about the parameter values that were chosen for $\alpha$ and $\sigma$ in Theorem~\ref{th:single_sol_pitchfork}, and one can for instance repeat the same procedure for different values of $\alpha$, leading to Figure~\ref{fig:pitchfork_various_alphas_intro}. The rigorous computations required to produce this picture can be reproduced by running the Matlab code \texttt{script\_pitchfork\_all.m}, available at~\cite{BreGit23}, together with the Intlab toolbox~\cite{Rum99} for interval arithmetic. By narrowing in onto the minimum, we also obtain the following statement.
\begin{thm}
\label{th:pitchfork_minimum}
Let $\sigma = 1$ in~\eqref{eq:pitchfork}. The map $\alpha\mapsto \Ic_\alpha(0)$ has a local minimum at some $\alpha_* \in [1.22\bm{5},1.22\bm{9}]$.
\end{thm}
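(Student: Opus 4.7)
The plan is to mimic the argument used in Theorem~\ref{th:single_sol_pitchfork}, but now applied at several values of $\alpha$ inside the putative minimum interval. First I would rigorously enclose the map $\alpha \mapsto \Ic_\alpha(0)$ at three carefully chosen values: the two endpoints $\alpha_L = 1.225$ and $\alpha_R = 1.229$, and at least one intermediate value $\alpha_M \in (\alpha_L, \alpha_R)$ (for instance $\alpha_M \approx 1.227$, close to the numerical minimizer). For each of these three values of $\alpha$, I would repeat verbatim the procedure of Theorem~\ref{th:single_sol_pitchfork}: use the homotopy method to enclose $\Lambda_\alpha(p)$ at three nearby values of $p$, invoke convexity of $p \mapsto \Lambda_\alpha(p)$ (Theorem~\ref{thm:mlp}(iv)) to localize its minimizer in a small $p$-interval $[p_1^\alpha, p_2^\alpha]$, and then re-run the homotopy method over this interval (with interval-valued coefficients) to obtain a rigorous enclosure of $\Ic_\alpha(0) = - \inf_p \Lambda_\alpha(p)$.

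Once I have intervals $\Ic_{\alpha_L}(0) \in [a_L, b_L]$, $\Ic_{\alpha_M}(0) \in [a_M, b_M]$, $\Ic_{\alpha_R}(0) \in [a_R, b_R]$, the conclusion follows provided the enclosures are sharp enough that $b_M < a_L$ and $b_M < a_R$. Indeed, the map $\alpha \mapsto \Ic_\alpha(0)$ is continuous: the potential $V_p$ in~\eqref{eq:Schroedinger_pitchfork} depends analytically on $\alpha$, so by standard analytic perturbation theory the principal eigenvalue $\Lambda_\alpha(p)$ is analytic (and hence continuous) in $\alpha$ for each fixed $p$; moreover a local uniform bound on $\Lambda_\alpha$ in $p$ (easily obtained from the enclosure of its argmin) allows upgrading pointwise continuity to continuity of $\alpha \mapsto \inf_p \Lambda_\alpha(p)$. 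With continuity in hand, the strict inequalities $\Ic_{\alpha_M}(0) < \Ic_{\alpha_L}(0)$ and $\Ic_{\alpha_M}(0) < \Ic_{\alpha_R}(0)$ guarantee that the restriction of $\Ic_\bullet(0)$ to the closed interval $[\alpha_L,\alpha_R]$ attains its minimum at some interior $\alpha_* \in (\alpha_L, \alpha_R)$, which is then a local minimum of the map.

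The main obstacle will be the sharpness of the enclosures rather than any new conceptual ingredient. Figure~\ref{fig:pitchfork_various_alphas_intro} (right panel) indicates that $\Ic_\alpha(0)$ varies extremely mildly across $[1.225, 1.229]$, so the gap $b_M - a_L$ (and likewise $b_M - a_R$) one needs to produce is very small; achieving this requires both the Rayleigh--Ritz and Lehmann--Maehly basis used in the homotopy step to be rich enough, and the final $p$-interval $[p_1^\alpha, p_2^\alpha]$ enclosing the minimizer of $\Lambda_\alpha$ to be subdivided as suggested in Remark~\ref{rem:tight_enclosures}. In practice this only amounts to increasing the truncation size $M$, the number of homotopy steps, and the subdivision count, at the cost of computation time. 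Of course, one may also need to try several candidate values of $\alpha_M$ in order to find one where the enclosure sits strictly below those at the endpoints; this is a finite, computer-assisted search, and can be automated as in the script \texttt{script\_pitchfork\_all.m} already used to produce Figure~\ref{fig:pitchfork_various_alphas_intro}.
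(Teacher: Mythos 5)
Your proposal matches the paper's proof essentially verbatim: the paper encloses $\Ic_\alpha(0)$ at $\alpha = 1.225$, $1.227$, $1.229$ using the homotopy method of Theorem~\ref{th:single_sol_pitchfork}, verifies $\Ic_{1.227}(0) < \Ic_{1.225}(0)$ and $\Ic_{1.227}(0) < \Ic_{1.229}(0)$, and concludes the existence of a local minimum in $[1.225, 1.229]$. Your explicit justification of the continuity of $\alpha \mapsto \Ic_\alpha(0)$ (which the paper leaves implicit) is a welcome addition but does not change the argument.
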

\begin{proof}
Proceeding as in the proof of Theorem~\ref{th:single_sol_pitchfork}, we rigorously enclose $\Ic_\alpha(0)$ for three different values of $\alpha$. We obtain
\begin{align*}
&\Ic_{1.225}(0) \in [0.43605\bm{413}, 0.43605\bm{513}],\\
&\Ic_{1.227}(0) \in [0.43605\bm{254}, 0.43605\bm{354}],\\
&\Ic_{1.229}(0) \in [0.43605\bm{359}, 0.43605\bm{460}].
\end{align*} 
In particular, $\Ic_{1.227}(0)<\Ic_{1.225}(0)$ and $\Ic_{1.227}(0)<\Ic_{1.229}(0)$, which yields the existence of a local minimum of $\alpha\mapsto \Ic_\alpha(0)$ in $[1.22\bm{5},1.22\bm{9}]$. The rigorous computations required for this proof can be reproduced by running the Matlab code \texttt{script\_pitchfork\_minimum.m}, available at~\cite{BreGit23}, together with the Intlab toolbox~\cite{Rum99} for interval arithmetic.
\end{proof}

\subsection{A two-dimensional toy model}

\label{sec:CAP_Toy}

The goal of this subsection is to provide an explicit and tight enclosure of the first eigenvalue of the operator $L_p$ defined in~\eqref{eq:tilted_gen_model2_v2}. Most of the computer-assisted arguments used in this section are by now standard, except maybe for the rigorous continuation framework allowing us to easily control at once the map $p\mapsto\Lambda(p)$ and its derivative. For the sake of convenience, we rescale the problem to $[0,2\pi]$, and instead consider the operator (still denoted $L_p$) defined by
\begin{align*}
L_p f (\phi) = 2\sigma^2\partial^2_\phi f(\phi) + b(1+\cos(\phi)) \partial_\phi f(\phi) + \frac{p}{2}\left(b\sin(\phi) - 2\alpha\right) f(\phi), \quad \phi\in[0,2\pi],
\end{align*}
with periodic boundary conditions. Keeping $p$ fixed for the moment, and denoting by $(\bar f,\bar \lambda)$ an approximate eigenpair of $L_p$, computed numerically, we introduce the zero finding problem
\begin{align}
\label{eq:def_Fp}
\renewcommand{\arraystretch}{1.5}
F_p(f,\lambda) = \begin{pmatrix}
L_pf - \lambda f \\
\left\langle f,\barf \right\rangle_{L^2} -1
\end{pmatrix},
\end{align}
where
\begin{align*}
\left\langle f,g \right\rangle_{L^2} := \frac{1}{2\pi} \int_0^{2\pi} f(\phi)g(\phi)^* d\phi,
\end{align*}
with $z^*$ denoting the complex conjugate of a complex number $z$. A zero of $F_p$ corresponds to an exact eigenpair of $L_p$, and then second equation in $F_p$ is a normalization condition ensuring local uniqueness of the eigenpair.

%
%

\subsubsection{General strategy}
\label{sec:shear_general}

We use a Newton-Kantorovich type of argument, in an appropriate function space that will be introduced just below,  to prove that there indeed exists a true zero $(f,\lambda)$ of $F_p$ near the approximate eigenpair $(\bar f,\bar\lambda)$. We then also have to prove that the obtained eigenvalue $\lambda$ is really $\Lambda(p)$, and not any other eigenvalue of $L_p$. Since eigenfunctions of $L_p$ are eigenfunctions of $S^t_p, t > 0$ and since $S^t_p$ is nonnegative and admits a simple dominant eigenvalue, it will be enough to check a posteriori that the eigenfunction $f$ is nonnegative to get that $\lambda = \Lambda(p)$. For more on semigroups of compact, positive operators, see, e.g., \cite{birkhoff1957extensions}.

Because of the periodic boundary conditions, we look for eigenfunctions as Fourier series:
\begin{align*}
f(\phi) = \sum_{n\in\Z} f_n e^{in\phi}.
\end{align*}
In the sequel, we always identify such a function with its sequence $(f_n)_{n\in\Z}$ of Fourier coefficients, still denoted $f$. In particular, the Fourier coefficients of $L_p f$ write
\begin{align}
\label{eq:Lpf_Fourier}
\left(L_pf\right)_n = \left(-2\sigma^2n^2 + ibn - \alpha p\right)f_n   + i\left(\frac{b}{2}(n+1) + \frac{pb}{4}\right)f_{n+1} + i\left(\frac{b}{2}(n-1) - \frac{pb}{4}\right)f_{n-1}.
\end{align}

\begin{defn}
We define
\begin{align*}
\ell^1(\Z,\C) = \left\{ f \in \C^\Z ,\ \left\Vert f \right\Vert_{\ell^1} < \infty \right\},
\end{align*}
where
\begin{align*}
\left\Vert f \right\Vert_{\ell^1} =
 \sum_{n\in\Z} \left\vert f_n \right\vert ,
\end{align*}
together with $\Xc = \ell^1(\Z,\C) \times \C $, where $\left\Vert (f,\lambda) \right\Vert_{\Xc} = 
\left\Vert f \right\Vert_{\ell^1} + \left\vert \lambda \right\vert$.
\end{defn}
In practice, we compute the approximate eigenfunction $\bar f$ as a truncated Fourier series, therefore $\bar X = (\bar f,\bar\lambda)$ trivially belongs to $\Xc$. We want to show that, for some (explicit and small) $r>0$, there exists a unique zero $X=(f,\lambda)$ of $F_p$ in $\Xc$ such that $\left\Vert X - \barX\right\Vert_{\Xc} \leq r$. This will be accomplished via the contraction mapping theorem, applied to a Newton-like operator associated to $F_p$, which we define momentarily.

Given a truncation level $N\in\N$, we introduce the corresponding truncation operator $\Pi_N$: For $f=\left(f_n\right)_{n\in\Z}$, 
\begin{align*}
\left(\Pi_N f\right)_n = \left\{
\begin{aligned}
&f_n \qquad & \vert n\vert \leq N,\\
&0 \qquad & \vert n\vert > N.\\
\end{aligned}
\right.
\end{align*}
For $X=(f,\lambda) \in \Xc$,
$\Pi_{N} X = (\Pi_{N} f, \lambda)$. In the sequel, we assume that the approximate eigenpair $\barX=(\barf,\barl)$ belongs to $\Pi_N\Xc$, for some $N$ to be specified later.

We then define the linear operator $A$ by
\begin{align}
\label{eq:def_A}
A X = J \Pi_N X + \frac{1}{2\sigma^2} \partial^{-2}_\phi (I-\Pi_N) X,
\end{align}
where $J$ is a linear operator on $\Pi_{N} \Xc$, which can therefore be identified with an $(2N+2)\times(2N+2)$ matrix, computed numerically as an approximation of $\left(\Pi_N F_p'(\barX)\Pi_N\right)^{-1}$. Here $\partial^{-2}_\phi$ is just the inverse Laplacien operator, i.e. $\left(\partial^{-2}_\phi f\right)_n = -\frac{f_n}{n^2}$, which is well defined on $(I-\Pi_N) X$ since $(I-\Pi_N) X$ only contains mean zero functions. Provided $N$ is large enough, such an $A$ should provide us with an explicit and accurate approximate inverse of the Fr\'echet derivative $F_p'(\bar X)$, which we can use in the following theorem.

\begin{thm}
\label{thm:NK}
Let $F_p$ be as in~\eqref{eq:def_Fp} and $A$ of the form~\eqref{eq:def_A}. Assume there exist $Y,Z_1,Z_2\geq 0$ such that
\begin{align}
\label{eq:cond_NK}
\left\Vert AF_p(\barX) \right\Vert_{\Xc} &\leq Y, \nonumber\\
\left\Vert I - AF_p'(\barX) \right\Vert_{\Xc} &\leq Z_1, \\
\left\Vert A(F_p'(X)-F_p'(\barX)) \right\Vert_{\Xc} &\leq Z_2 \left\Vert X-\barX \right\Vert_{\Xc},\qquad \forall X\in\Xc. \nonumber
\end{align}
If 
\begin{align}
\label{eq:cond_NK_contraction}
Z_1 &< 1, \nonumber\\
2YZ_2 &< (1-Z_1)^2,
\end{align}
then, for any $r$ satisfying 
\begin{align*}
\frac{1-Z_1 - \sqrt{(1-Z_1)^2-2YZ_2}}{Z_2} \leq r < \frac{1-Z_1}{Z_2},
\end{align*}
there exists a unique zero $X$ of $F_p$ in $\Xc$ such that $\left\Vert X - \barX\right\Vert_{\Xc} \leq r$. Furthermore, if $\barX=(\barf,\barl)$ is real, i.e. $\barf_{-n} = \left(\barf_{n}\right)^*$ for all $n\in\Z$ and $\barl\in\R$, then so is $X$. 
\end{thm}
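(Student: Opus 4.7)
I would recast the problem as a fixed-point problem for the Newton-like operator $T(X) := X - AF_p(X)$ and apply the Banach fixed-point theorem on the closed ball $\overline{B_r(\barX)} \subset \Xc$. A fixed point of $T$ satisfies $AF_p(X)=0$, and the main conceptual point I foresee needing care is the passage from this to $F_p(X)=0$, i.e.\ the verification that $A$ is injective: its tail block $\tfrac{1}{2\sigma^2}\partial_\phi^{-2}(I-\Pi_N)$ is trivially so, while the finite block $J$ must be shown invertible (a consequence of $Z_1<1$, which renders $AF_p'(\barX)$ invertible on $\Xc$ via Neumann series, together with an unpacking of the direct-sum structure of $A$; in practice this is confirmed a posteriori with interval arithmetic).

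The heart of the proof is the so-called radii-polynomial estimate. Writing
\begin{equation*}
T(X) - \barX = -AF_p(\barX) + \int_0^1 \bigl(I - AF_p'(\barX + t(X-\barX))\bigr)(X - \barX)\,\rmd t
\end{equation*}
via the fundamental theorem of calculus, splitting the integrand as $I - AF_p'(\barX) + A\bigl(F_p'(\barX) - F_p'(\barX + t(X-\barX))\bigr)$, and invoking the three bounds in \eqref{eq:cond_NK} yields
\begin{equation*}
\|T(X) - \barX\|_{\Xc} \leq Y + Z_1 r + \tfrac{Z_2}{2}\, r^2 \quad \text{for every } X \in \overline{B_r(\barX)}.
\end{equation*}
Thus $T$ stabilizes the closed ball precisely when the quadratic $q(r) := \tfrac{Z_2}{2}r^2 - (1-Z_1)r + Y$ is nonpositive, i.e.\ for $r$ between the two real roots of $q$, whose existence is guaranteed by $2YZ_2 < (1-Z_1)^2$. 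A nearly identical computation produces the Lipschitz bound $\|T(X_1) - T(X_2)\|_{\Xc} \leq (Z_1+Z_2 r)\|X_1-X_2\|_{\Xc}$, which is a strict contraction as soon as $r < (1-Z_1)/Z_2 = r_+$, and Banach's theorem then delivers the unique zero of $F_p$ in the stated range.

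For the reality statement, introduce the involution $\tau : \Xc \to \Xc$ defined by $(\tau f)_n := (f_{-n})^*$ and $\tau\lambda := \lambda^*$, which corresponds to pointwise complex conjugation of $\phi \mapsto f(\phi)$ in physical space. Inspection of the Fourier formula \eqref{eq:Lpf_Fourier} shows that $L_p$ has real coefficients in physical space, so $\tau(L_p f) = L_p(\tau f)$; assuming $\barf$ is real, both the normalization condition and the operator $A$ (whose block $J$ may be chosen real) also commute with $\tau$, and hence $T\circ\tau = \tau\circ T$. Since $\tau$ fixes $\barX$ and is an isometry, the ball $\overline{B_r(\barX)}$ is $\tau$-invariant, so $\tau X^*$ is a fixed point of $T$ in it; uniqueness forces $X^* = \tau X^*$, which is exactly the statement that $X^*$ is real.
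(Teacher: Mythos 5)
Your proposal is correct and is essentially the standard radii-polynomial/Newton--Kantorovich argument; the paper itself does not reproduce a proof but defers to the references (in particular \cite{BerBreLesVee21}), and your fixed-point estimates for $T(X)=X-AF_p(X)$, the quadratic $q(r)=\tfrac{Z_2}{2}r^2-(1-Z_1)r+Y$, and the injectivity of $A$ via surjectivity of $AF_p'(\barX)$ plus the block structure of $A$ all match that argument. One small refinement: for the reality statement it is cleaner to apply the conjugation involution $\tau$ directly to zeros of $F_p$ --- since $F_p\circ\tau=\tau\circ F_p$ (real coefficients of $L_p$ and real $\barf$ in the normalization), $\tau\barX=\barX$, and $\tau$ is an isometry, any zero $X$ in the ball yields another zero $\tau X$ in the same ball, and the already-established uniqueness forces $\tau X=X$; this avoids the extra hypothesis, not present in the theorem statement, that the matrix $J$ be chosen real so that $A$ commutes with $\tau$.
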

For other versions of the above theorem, and its proof, see e.g.~\cite{AriKocTer05,DayLesMis07,Plu01,BerBreLesVee21,Yam98} and the references therein. This version is particularly close to~\cite[Theorem 2.19]{BerBreLesVee21}, where a proof is included. We note that in general the $Z_2$ estimate only needs to hold in a neighborhood of $\barX$, but here such an estimate will naturally be global because the map $F_p$ is quadratic. We derive below explicit constants $Y$, $Z_1$ and $Z_2$ satisfying~\eqref{eq:cond_NK}, and then check, using interval arithmetic~\cite{Rum99} to control rounding errors, that these constants satisfy~\eqref{eq:cond_NK_contraction} for some given approximate solution $\barX$.

\begin{rmk}
\label{rem:checking_positivity}
Once Theorem~\ref{thm:NK} has been successfully applied, we get an explicit $r$ such that $\left\Vert f-\barf \right\Vert_{\ell^1} \leq r$. Note that the $\ell^1$-norm controls the $C^0$-norm, hence
\begin{align*}
\inf_{\phi\in[0,2\pi]} f(\phi) \geq \inf_{\phi\in[0,2\pi]} \barf(\phi) - r,
\end{align*}
which provides us with an easy way of checking a posteriori that the eigenfunction $f$ is indeed nonnegative.
\end{rmk}

\subsubsection{The bounds}
\label{sec:shear_bounds}

\paragraph{The $Y$ bound.}
As $\barf$ is a truncated Fourier series, $AF_p(\barX)$ also has only finitely many non-zero coefficients. Therefore we can simply take $Y = \left\Vert AF_p(\barX) \right\Vert_{\Xc}$, which is a finite computation that can be carried out on a computer, with interval arithmetic to account for rounding errors.

\paragraph{The $Z_1$ bound.}
In order to derive a suitable $Z_1$ bound, let us introduce $B_p = I - AF_p'(\barX)$. We have to estimate
\begin{align*}
\left\Vert B_p \right\Vert_{\Xc} = \sup_{\substack{X \in \Xc \\ X\neq 0}} \frac{\left\Vert B_pX \right\Vert_{\Xc}}{\left\Vert X \right\Vert_{\Xc}},
\end{align*}
which we do by noticing $\left\Vert B_p \right\Vert_{\Xc}$ can be split as follows (see, e.g., \cite[Lemma 2.13]{Bre23}):
\begin{align}
\label{eq:Z1_split}
\left\Vert B_p \right\Vert_{\Xc} = \max\left(\sup_{\substack{X \in \Pi_{N+1} \Xc \\ X\neq 0}} \frac{\left\Vert B_pX \right\Vert_{\Xc}}{\left\Vert X \right\Vert_{\Xc}},\ \sup_{\substack{X \in \left(I-\Pi_{N+1}\right) \Xc \\ X\neq 0}} \frac{\left\Vert B_pX \right\Vert_{\Xc}}{\left\Vert X \right\Vert_{\Xc}}\right).
\end{align}
The first sup is in fact a maximum:
\begin{align*}
\sup_{\substack{X \in \Pi_{N+1} \Xc \\ X\neq 0}} \frac{\left\Vert B_pX \right\Vert_{\Xc}}{\left\Vert X \right\Vert_{\Xc}} =  \max\left(\left\Vert B_p(0,1) \right\Vert_{\Xc},\max_{0\leq n\leq N} \frac{\left\Vert B_p (e_n,0)\right\Vert_{\Xc}}{\left\Vert e_n \right\Vert_{\ell^1}} \right),
\end{align*} 
where $e_n$ is the sequence having $1$ as $n$-th entry and $0$ otherwise. Similarly to the $Y$ computation, each element $B_p (e_n,0)$ of $\Xc$ only has finitely many non-zero coefficients, and the above estimate can be evaluated on a computer.

In order to deal with the second supremum in~\eqref{eq:Z1_split} we take advantage of the fact that, for any $X \in \left(I-\Pi_{N+1}\right) \Xc$, $B_pX$ does not depend on $J$. Indeed, for an arbitrary $X=(f,\lambda)\in \Xc$, denoting $V = (g,\mu) = F_p'(\barX)X$, we have (taking the Frechet derivative of~\eqref{eq:def_Fp} and looking at~\eqref{eq:Lpf_Fourier})
\begin{align*}
g_n &= \left(-2(\sigma n)^2-\alpha p +i bn\right) f_n + \frac{ib(2(n+1)+p)}{4} f_{n+1} + \frac{ib(2(n-1)-p)}{4} f_{n-1} - \barl f_n - \lambda \barf_n\quad \forall~n\in\Z,\\
\mu &= \sum_{\vert n\vert \leq N} f_n \left(\barf_n\right)^*,
\end{align*}
where we recall that $\barf\in\Pi_N\ell^1$.
In particular, if $X \in \left(I-\Pi_{N+1}\right) \Xc$, 
\begin{align*}
g_n &= 0 \quad \forall~\vert n\vert \leq N, \\
\mu &= 0,
\end{align*}
hence 
\begin{align*}
B_pX = X - AV = \begin{pmatrix}
f - \frac{1}{2\sigma^2}\left(\partial^{-2}_\phi\right) g \\ 0
\end{pmatrix}.
\end{align*}
Then, still assuming $X \in \left(I-\Pi_{N+1}\right) \Xc$,
\begin{align*}
\left\Vert B_pX \right\Vert_{\Xc} &\leq 
\frac{1}{2\sigma^2} \sum_{\vert n\vert \geq N+2} \left( \frac{\left(\vert\alpha p+\barl \vert + \vert bn\vert\right)}{n^2}  + \frac{\vert b\vert\left(2\vert n\vert + \vert p\vert\right)}{4} \left( \frac{1}{(n-1)^2} + \frac{1}{(n+1)^2} \right)  \right)\left\vert f_n\right\vert  \\
&\leq \frac{1}{2\sigma^2} \Bigg( \frac{\left\vert \alpha p + \barl \right\vert + \vert b\vert (N+2)}{(N+2)^2} + \frac{\vert b\vert\left(2 (N+2) + \vert p\vert\right)}{2(N+1)^2} \Bigg) \sum_{\vert n\vert \geq N+2} \vert f_n\vert ,
\end{align*}
and therefore
\begin{align*}
\sup_{\substack{X \in \left(I-\Pi_{N+1}\right) \Xc \\ X\neq 0}} \frac{\left\Vert B_pX \right\Vert_{\Xc}}{\left\Vert X \right\Vert_{\Xc}} \leq \frac{1}{2\sigma^2} \Bigg( \frac{\left\vert \alpha p + \barl \right\vert + \vert b\vert (N+2)}{(N+2)^2} + \frac{\vert b\vert\left(2 (N+2) + \vert p\vert\right)}{2(N+1)^2} \Bigg).
\end{align*}
We thus consider 
\begin{align*}
Z_1^{finite} = \max\left(\left\Vert B_p(0,1) \right\Vert_{\Xc},\max_{0\leq n\leq N} \frac{\left\Vert B_p (e_n,0)\right\Vert_{\Xc}}{\left\Vert e_n \right\Vert_{\ell^1}} \right),
\end{align*}
and 
\begin{align*}
Z_1^{tail} = \frac{1}{2\sigma^2} \Bigg( \frac{\left\vert \alpha p + \barl \right\vert + \vert b\vert (N+2)}{(N+2)^2} + \frac{\vert b\vert\left(2 (N+2) + \vert p\vert\right)}{2(N+1)^2} \Bigg),
\end{align*}
which are both computable, and take $Z_1 = \max(Z_1^{finite},\, Z_1^{tail})$.

\paragraph{The $Z_2$ bound.}

The only nonlinear term in $F_p(f,\lambda)$ being the $-\lambda f$ term, we have, for any $X = (f,\lambda)$ in $\Xc$,
\begin{align*}
\left(F_p'(X)- F_p'(\barX)\right) X = \begin{pmatrix}
(\barl-\lambda)f + \lambda(\barf - f) \\
0
\end{pmatrix},
\end{align*}
therefore
\begin{align*}
\left\Vert \left(F_p'(X)- F_p'(\barX)\right)  X \right\Vert_{\Xc} \leq \left\Vert X - \barX \right\Vert_{\Xc} \left\Vert X \right\Vert_{\Xc},
\end{align*}
and we can take $Z_2 = \left\Vert A \right\Vert_{\Xc}$. Because of the specific structure of $A$ (see~\eqref{eq:def_A}), and of our choice of norm, this operator norm can also be computed explicitly:
\begin{align*}
\left\Vert A \right\Vert_{\Xc} = \max\left(\left\Vert J \right\Vert_{\Xc},\, \frac{1}{2\sigma^2(N+1)^2} \right),
\end{align*}
where the remaining operator norm applies to a finite dimensional operator and therefore amounts to a (weighted) $\ell^1$ matrix norm.

\subsubsection{Extension to a varying $p$}
\label{sec:shear_continuation}

We explained in Sections~\ref{sec:shear_general} and~\ref{sec:shear_bounds} how to get rigorous a posteriori error bounds for an eigenpair $(f,\lambda)$ of $L_p$, for a fixed value of $p$, and how to check that $\lambda = \Lambda(p)$. However, in view of Theorem~\ref{thm:LDPs}, it would be interesting to have a rigorous enclosure of the map $p\mapsto \Lambda(p)$, at least on a compact interval $[p_-,p_+]$, in a strong enough norm so that we could also control quantities likes the asymptotic Lyapunov exponent $\Lambda'(0)$ and the asymptotic variance $\Lambda''(0)$.

The rigorous continuation tools developed in~\cite{Bre23} allows us to do exactly that in a straightforward manner. We expand all functions of $p$ using Chebyshev series, and look for eigenvalues as Chebyshev series, and for eigenfunctions as Fourier-Chebyshev series:
\begin{align*}
\lambda(p) = \lambda_0 + 2\sum_{k=1}^\infty \lambda_{k} \tilde T_{k} (p),\qquad f(p,\phi) = \sum_{n\in\Z} \left(f_{n,0} + 2 \sum_{k=1}^\infty f_{n,k} \tilde T_{k}(p) \right) e^{in\phi},
\end{align*}
where $\tilde T_k$, $k\in\N$, are the Chebyshev polynomials of the first kind, rescaled from $[-1,1]$ to $[p_-,p_+]$: $\tilde T_k(p) = T_k\left(\frac{2p-(p_++p_-)}{p_+-p_-}\right)$. We then define, for $\eta\geq 1$,
\begin{align*}
\ell^1_\eta(\N,\C) = \left\{ u \in \C^\N ,\ \left\Vert u \right\Vert_{\ell^1_\eta} < \infty \right\},
\end{align*}
where
\begin{align*}
\left\Vert u \right\Vert_{\ell^1_\eta} =
 \vert u_0\vert + 2\sum_{k=1}^\infty \left\vert u_k \right\vert \eta^{k} = \sum_{k\in\Z} \left\vert u_{\vert k\vert} \right\vert \eta^{\vert k\vert},
\end{align*}
and
\begin{align*}
\ell^1\left(\Z,\ell^1_\eta(\N,\C)\right) = \left\{ f \in \left(\ell^1_\eta(\N,\C)\right)^\Z ,\ \left\Vert f \right\Vert_{\ell^1,\ell^1_\eta} < \infty \right\},
\end{align*}
where
\begin{align*}
\left\Vert f \right\Vert_{\ell^1,\ell^1_\eta} = \sum_{n\in\Z} \left\Vert f_n \right\Vert_{\ell^1_\eta}  = \sum_{n\in\Z} \left( \left\vert f_{n,0}\right\vert + 2 \sum_{k=1}^\infty \left\vert f_{n,k}\right\vert \eta^k \right) ,
\end{align*}
and look for a zero of~\eqref{eq:def_Fp}  in $\Xc_{\eta} = \ell^1\left(\Z,\ell^1_\eta(\N,\C)\right) \times \ell^1_\eta(\N,\C)$, with norm
\begin{align*}
\left\Vert (f,\lambda)\right\Vert_{\Xc_{\eta}} =  
\left\Vert f \right\Vert_{\ell^1,\ell^1_\eta} + \left\Vert \lambda \right\Vert_{\ell^1_\eta}.
\end{align*}
Our approximate zero $(\barf,\barl)$ is then made of a truncated (in $n$ and $k$) Fourier-Chebyshev series $\barf$ and of a truncated Chebyshev series $\barl$.

\begin{rmk}
For given truncation levels, say $N$ in Fourier and $K$ in Chebyshev, the size of the approximate zero $(\barf,\barl)$ is now of the order of $NK$ (the actual number being $(K+1)(2N+1)+1$). However, because there are only multiplication operators in the $p$ variable, matrices like $J$ can in fact be stored with only order $N^2K$ cost (and not $N^2K^2$ as one might initially think), which makes the cost to the continuation not so prohibitive in practice. For more details, we refer to~\cite{Bre23,BreHen24}.
\end{rmk}

The advantage of this setup compared to more traditional continuation methods is twofold. First, the estimates required for using the Newton-Kantorovich Theorem~\ref{thm:NK} in this new space $\Xc_{\eta}$ are essentially the same as in the fixed $p$ case, up to the fact that real numbers which depend on $p$ becomes elements of $\ell^1_\eta(\N,\C)$, and that absolute values applied to such quantities have to be replaced by $\ell^1_\eta$-norms. In particular, the $Z_1^{tail}$ estimate naturally generalizes to
\begin{align*}
Z_1^{tail} = \frac{1}{2\sigma^2} \Bigg( \frac{\left\Vert \alpha p + \barl \right\Vert_{\ell^1_\eta} + \vert b\vert (N+2)}{(N+2)^2} +   \frac{\vert b\vert\left(2 (N+2) + \left\Vert p\right\Vert_{\ell^1_\eta}\right)}{2(N+1)^2} \Bigg).
\end{align*}

The second reason why doing continuation via an extension in $\ell^1_\eta(\N,\C)$ can be helpful, which is especially relevant in our context, is that the $\ell^1_\eta$ norm allows us to directly control derivatives.

\begin{lem}
\label{lem:ell1eta_derivatives}
Let $\eta>1$, $u\in\ell^1_\eta(\N,\C)$, and also denote by $u$ the map defined on $[-1,1]$ by
\begin{align*}
u:p\mapsto u_0 + 2\sum_{k=1}^\infty u_{k} T_{k} (p).
\end{align*}
For all $x\in[0,1]$, let
\begin{align*}
\delta[x,\eta] = 
\begin{cases}
\frac{\eta+\eta^{-1}}{2} - x & x > \frac{2}{\eta+\eta^{-1}}, \\
\frac{\eta-\eta^{-1}}{2}\sqrt{1-x^2}  & \text{otherwise}.
\end{cases}
\end{align*}
For all $p_1,p_2\in[-1,1]$, $p_1\leq p_2$, define 
\begin{align*}
C^1_\eta(p_1,p_2) &= \min \left(\max_{\substack{ k \in \N \\ 1\leq k\leq \frac{2}{\ln\eta}}} \frac{k^2}{\eta^{k}},\  \frac{1}{\delta[\max(\vert p_1\vert,\vert p_2\vert),\eta]} \right),\\
C^2_\eta(p_1,p_2) &= \min \left(\max_{\substack{ k \in \N \\ 2\leq k\leq \frac{4}{\ln\eta}}} \frac{k^2(k^2-1)}{3\eta^{k}},\  \frac{2}{\left(\delta[\max(\vert p_1\vert,\vert p_2\vert),\eta]\right)^{2}} \right).
\end{align*}
Then, the map $u$ is twice differentiable, and 
\begin{align*}
\sup_{p\in[p_1,p_2]} \left\vert u'(p)\right\vert \leq C^1_\eta(p_1,p_2) \left\Vert u\right\Vert_{\ell^1_\eta},\quad \sup_{p\in[p_1,p_2]} \left\vert u''(p)\right\vert \leq C^2_\eta(p_1,p_2) \left\Vert u\right\Vert_{\ell^1_\eta}.
\end{align*}
\end{lem}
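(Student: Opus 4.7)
The key underlying fact is that if $u \in \ell^1_\eta(\N, \C)$, then the Chebyshev series $u(p) = u_0 + 2\sum_{k\geq 1} u_k T_k(p)$ extends to a continuous function on the closed Bernstein ellipse $\overline{E_\eta}$, whose interior is the image of $\{z \in \C : 1 < |z| < \eta\}$ under the Joukowski map $z \mapsto (z+z^{-1})/2$; this ellipse has foci at $\pm 1$, semi-major axis $a = (\eta+\eta^{-1})/2$, and semi-minor axis $b = (\eta-\eta^{-1})/2$. Parametrizing $\partial E_\eta$ by $z = \eta e^{i\theta}$, one has $|T_k(p)| = |(z^k + z^{-k})/2| \leq \eta^k$, so by uniform absolute convergence $u$ is holomorphic on the interior of $E_\eta$ and
\[
\|u\|_{L^\infty(\overline{E_\eta})} \;\leq\; |u_0| + 2\sum_{k\geq 1} |u_k|\eta^k \;=\; \|u\|_{\ell^1_\eta}.
\]
In particular $u$ is smooth on a neighborhood of $[-1,1]$, so twice differentiability is automatic.

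From there, the ``Cauchy'' halves of $C^1_\eta$ and $C^2_\eta$ (the terms $1/\delta[\cdot,\eta]$ and $2/\delta[\cdot,\eta]^2$) follow immediately from Cauchy's integral formula applied on a disk of radius $r$ centered at $p$ and contained in $\overline{E_\eta}$:
\[
|u^{(n)}(p)| \;\leq\; \frac{n!}{r^n}\|u\|_{L^\infty(\overline{E_\eta})} \;\leq\; \frac{n!}{r^n}\|u\|_{\ell^1_\eta},\qquad n=1,2,
\]
taking $r = \mathrm{dist}(p, \partial E_\eta)$. The remaining work is a Euclidean geometry computation: for a real point $(p,0)$ with $|p|\leq 1$, find the nearest point on $\partial E_\eta$ by requiring the displacement to be parallel to the outward normal $(x/a^2, y/b^2)$. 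One checks that the perpendicular from $(p,0)$ meets the major axis endpoint $(a,0)$ precisely when $|p| \geq c^2/a = 1/a = 2/(\eta+\eta^{-1})$ (yielding distance $a-|p|$), and otherwise meets an off-axis foot $(x,y)$ with $x = pa^2$, which, after substituting into the ellipse equation and using $a^2-b^2=1$, collapses to the distance $b\sqrt{1-p^2}$. This matches exactly the definition of $\delta[|p|,\eta]$, and taking the supremum over $p \in [p_1, p_2]$ gives $\max(|p_1|,|p_2|)$ as the worst case.

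The ``Markov--Bernstein'' halves of $C^1_\eta, C^2_\eta$ come from term-by-term differentiation. The classical inequalities of A.~and V.~Markov yield $|T_k'(p)| \leq k^2$ and $|T_k''(p)| \leq k^2(k^2-1)/3$ on $[-1,1]$, so
\[
|u'(p)| \;\leq\; 2\sum_{k\geq 1}|u_k| k^2 \;=\; \sum_{k\geq 1}\bigl(2|u_k|\eta^k\bigr)\frac{k^2}{\eta^k} \;\leq\; \Bigl(\sup_{k\geq 1}\tfrac{k^2}{\eta^k}\Bigr)\|u\|_{\ell^1_\eta},
\]
and analogously $|u''(p)| \leq \bigl(\sup_{k\geq 2} k^2(k^2-1)/(3\eta^k)\bigr)\|u\|_{\ell^1_\eta}$. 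A quick calculus check shows $k\mapsto k^2/\eta^k$ is unimodal with real maximum at $k=2/\ln\eta$, and likewise $k \mapsto k^2(k^2-1)/\eta^k$ is asymptotically maximized near $k = 4/\ln\eta$; hence the integer maxima of these quantities are achieved at values of $k$ of this order, motivating the localized range in the statement. Taking the minimum of the Cauchy and Markov--Bernstein bounds yields $C^1_\eta$ and $C^2_\eta$ as stated. The main technical subtlety is the distance-to-ellipse computation; the rest is routine given the classical Bernstein ellipse and Markov--Bernstein theory.
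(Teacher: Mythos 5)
Your proposal is correct and follows essentially the same route as the paper's proof: the $\delta$-halves of $C^1_\eta,C^2_\eta$ come from analyticity on the Bernstein ellipse, the bound $\sup_{\overline{\Ec_\eta}}|u|\leq\|u\|_{\ell^1_\eta}$, and Cauchy's integral formula on the inscribed disk of radius $\delta[|p|,\eta]$ (you additionally carry out the nearest-point-on-the-ellipse computation that the paper dismisses with ``one can check''), while the other halves rest on the fact that the total weight of $|u_k|$ in $u'$ (resp.\ $u''$) on $[-1,1]$ is $k^2$ (resp.\ $k^2(k^2-1)/3$) — you obtain this by quoting $\|T_k'\|_\infty=T_k'(1)=k^2$ and $\|T_k''\|_\infty=T_k''(1)$, whereas the paper rederives the same weights from the explicit Chebyshev expansion of $u'$ and a rearrangement of the double sum; the two are interchangeable. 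The only soft spot, which you share with the paper, is the assertion that the integer maximizer of $k^2/\eta^k$ lies in $[1,2/\ln\eta]$ (your ``of this order'', the paper's ``straightforward to check''): the maximizer can be the integer just above $2/\ln\eta$ for moderate $\eta$, though this is harmless for the small $\eta-1$ used in the computations.
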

\begin{rmk}
In fact, having $u$ in $\ell^1_\eta$ for $\eta>1$ implies that the associated map is analytic, on a so-called Bernstein ellipse of size $\eta$ containing the segment $[-1,1]$ (see e.g.~\cite{Tre13}). If useful, similar estimates on higher order derivatives could therefore also be obtained.
\end{rmk}
\begin{proof}
Classical properties of the Chebyshev polynomials yield, for all $p\in[-1,1]$,
\begin{align*}
u'(p) = \sum_{l=0}^\infty 2(2l+1) u_{2l+1} + 2 \sum_{k=1}^\infty \sum_{l=0}^\infty 2(k+2l+1)u_{k+2l+1} T_k(p),
\end{align*}
therefore
\begin{align*}
\left\vert u'(p)\right\vert &= 2\sum_{l=0}^\infty \left\vert u_{2l+1} \right\vert \eta^{2l+1} \frac{2l+1}{\eta^{2l+1}} +  2\sum_{k=1}^\infty \sum_{l=0}^\infty \left\vert u_{k+2l+1}\right\vert \eta^{k+2l+1} 2\frac{k+2l+1}{\eta^{k+2l+1}} \\
&\leq \left\Vert u\right\Vert_{\ell^1_\eta} \sup_{k\in \N_{\geq 1}} \frac{k^2}{\eta^{k}} .
\end{align*}
It is straightforward to check that this supremum is in fact a maximum, which must be reached for $k\leq \frac{2}{\ln\eta}$. A similar computation yields
\begin{align*}
\left\vert u''(p)\right\vert \leq \left\Vert u\right\Vert_{\ell^1_\eta} \frac{1}{3}\sup_{k\in \N_{\geq 2}} \frac{k^2(k^2-1)}{\eta^{k}} = \left\Vert u\right\Vert_{\ell^1_\eta} \frac{1}{3}\max_{\substack{ k \in \N \\ 2\leq k\leq \frac{4}{\ln\eta}}} \frac{k^2(k^2-1)}{\eta^{k}}.
\end{align*}
The other part of the two estimates is obtained by using that, since $u\in\ell^1_\eta(\N,\C)$, $\eta>1$, $u$ is analytic on the open Bernstein ellipse $\Ec_\eta$, defined as
\begin{align*}
\Ec_\eta = \left\{ z\in\C,\, \vert z-1\vert + \vert z+1\vert < \eta + \eta^{-1} \right\},
\end{align*}
and bounded on the closure $\overline{\Ec_\eta}$ of $\Ec_\eta$. Furthermore, one can check that, for all $x\in[0,1]$, the ball in $\C$ of center $x$ and radius $\delta[x,\eta]$ is contained in $\overline{\Ec_\eta}$. The second part of the estimates then follows from Cauchy's integral formula and the fact that
\begin{equation*}
\sup_{z\in\overline{\Ec_\eta}} \vert u(z)\vert \leq \left\Vert u\right\Vert_{\ell^1_\eta}. 
\hfill\qedhere
\end{equation*}
\end{proof}

Once we successfully use Theorem~\ref{thm:NK} with the extended space $\Xc_{\eta}$, and check that $\lambda=\Lambda$ (see Remark~\ref{rem:checking_positivity}) we can therefore combine the estimate
\begin{align*}
\left\Vert \Lambda - \barl \right\Vert_{\ell^1_\eta} \leq r,
\end{align*}
with Lemma~\ref{lem:ell1eta_derivatives}, in order to get
\begin{align}
\label{eq:C2_control}
\left\vert \Lambda''(0) - \barl''(0) \right\vert \leq \left(\frac{2}{p_+-p_-}\right)^2 C^2_\eta(0,0) r,
\end{align}
where the $\left(\frac{2}{p_+-p_-}\right)^2$ factor comes from the rescaling between $[-1,1]$ and $[p_-,p_+]$.
Since $\barl$ is merely a polynomial (a truncated Chebyshev series), $\barl''(0)$ can be evaluated explicitly, and~\eqref{eq:C2_control} provides us with a rigorous and explicit enclosure of the asymptotic variance $\Lambda''(0)$.

\subsubsection{Results}
\label{sec:shear_results}

\begin{figure}[h!]
\includegraphics[width=0.49\linewidth]{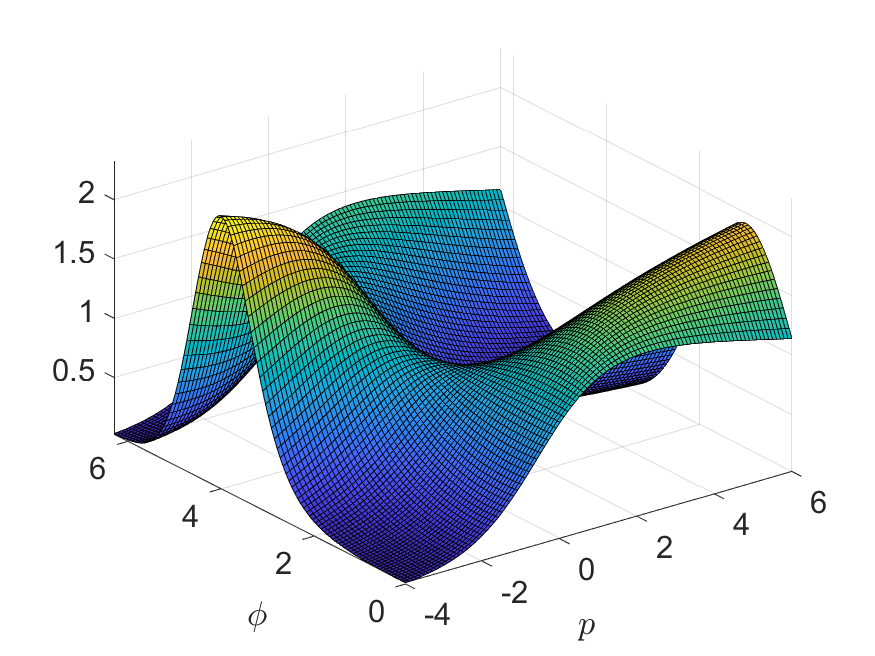}
\hfill
\includegraphics[width=0.49\linewidth]{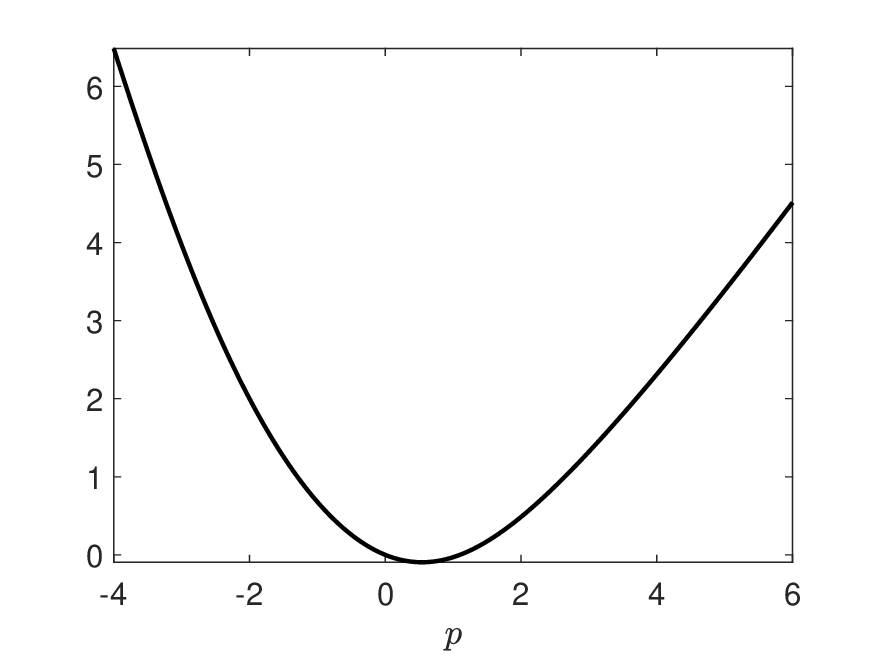}
\caption{Representation of the approximate eigenfunction $\barf = \barf(p,\phi)$ (left) and approximate eigenvalue $\barl = \barl(p)$ (right) used in the proof of Theorem~\ref{th:single_sol_shear}.}
\label{fig:approximate_eigenpair}
\end{figure}

\begin{thm}
\label{th:single_sol_shear}
Let $\alpha = 1$, $b=5$, and $\sigma = 1$ in~\eqref{eq:specvarEqusim_new_v2}. The rate function satisfies
\begin{align*}
\Ic(0) = - \inf_{p \in \mathbb{R}}  \Lambda(p) \in [0.094775\bm{0},0.094775\bm{3}].
\end{align*}
Furthermore, the asymptotic first Lyapunov exponent and the asymptotic variance are given by
\begin{align*}
\Lambda'(0) \in [-0.3523159\bm{8},-0.3523159\bm{4}] \qquad\text{and}\qquad \Lambda''(0) \in [0.65778\bm{7},0.65778\bm{9}].
\end{align*}
\end{thm}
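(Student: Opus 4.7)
The plan is to apply the rigorous continuation framework of Section~\ref{sec:shear_continuation} to enclose the map $p \mapsto \Lambda(p)$ on a compact interval $[p_-, p_+]$ containing both $0$ and the minimizer of $\Lambda$, and then read off the three quantities from this enclosure. Concretely, I would first compute numerically (using standard finite-section Fourier discretization and Newton's method applied to $F_p$ in~\eqref{eq:def_Fp}) an approximate minimizer $p^* \in [p_-, p_+]$ of $\Lambda$ along with approximate eigenpairs, and then fit these into a truncated Fourier--Chebyshev approximation $(\bar f, \bar \lambda) \in \Pi_N\Xc_\eta$ as displayed in Figure~\ref{fig:approximate_eigenpair}, for some suitable choice of Fourier truncation $N$, Chebyshev truncation $K$, and geometric weight $\eta > 1$.

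Next, I would form the finite matrix $J$ approximating $\bigl(\Pi_N F_p'(\bar X) \Pi_N\bigr)^{-1}$, define $A$ as in~\eqref{eq:def_A}, and compute with interval arithmetic the constants $Y$, $Z_1 = \max(Z_1^{finite}, Z_1^{tail})$, and $Z_2 = \|A\|_{\Xc_\eta}$ exactly as in Section~\ref{sec:shear_bounds}, but in the extended norm $\|\cdot\|_{\Xc_\eta}$ per Section~\ref{sec:shear_continuation}. Verifying~\eqref{eq:cond_NK_contraction} then yields, via Theorem~\ref{thm:NK}, an explicit radius $r$ such that the true eigenpair $(f, \lambda) \in \Xc_\eta$ satisfies $\|(f, \lambda) - (\bar f, \bar\lambda)\|_{\Xc_\eta} \leq r$. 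To confirm $\lambda = \Lambda$, I would then check, following Remark~\ref{rem:checking_positivity}, that $\inf_{(p,\phi)} \bar f(p,\phi) - r > 0$, where the infimum of $\bar f$ over the compact domain $[p_-, p_+] \times [0, 2\pi]$ is enclosed rigorously by subdividing and using interval arithmetic.

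Given the rigorous enclosure $\|\Lambda - \bar\lambda\|_{\ell^1_\eta} \leq r$, the asymptotic Lyapunov exponent and variance bounds follow directly from Lemma~\ref{lem:ell1eta_derivatives}: evaluating the explicit polynomial $\bar\lambda$ and its derivatives at the rescaled image of $0$ gives tight enclosures of $\bar\lambda'(0)$ and $\bar\lambda''(0)$, to which one adds the error estimate
\begin{align*}
|\Lambda^{(j)}(0) - \bar\lambda^{(j)}(0)| \leq \left(\frac{2}{p_+ - p_-}\right)^j C^j_\eta(0,0)\, r, \quad j = 1, 2.
\end{align*}
For $\Ic(0) = -\inf_p \Lambda(p)$, I would localize the minimizer by the convexity-based argument used in the proof of Theorem~\ref{th:single_sol_pitchfork}: evaluate the rigorous enclosure of $\Lambda$ at three values of $p$ straddling $p^*$ to certify a small bracketing interval $[p_1, p_2]$, then evaluate $\bar\lambda$ on the interval $[p_1, p_2]$ (using its polynomial form with interval-valued input) and add the rescaled $\ell^1_\eta$-to-$L^\infty$ error $r$ (or the refined bound from Remark~\ref{rem:tight_enclosures}) to obtain the stated numeric enclosure of $\Ic(0)$.

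The main obstacle I anticipate is balancing the length of $[p_-, p_+]$ against the required truncation levels: the interval must be wide enough to straddle both $p = 0$ and the minimizer of $\Lambda$ so that both $\Lambda''(0)$ and $\Ic(0)$ can be extracted from a single continuation run, while the Chebyshev truncation $K$ and the weight $\eta$ must be tuned so that the constants $C^j_\eta(0,0)$ entering the derivative estimates remain modest and $Z_1^{tail}$ stays well below $1$. Getting the several-digit tightness required by the statement will likely force $K$ and $N$ to be fairly large, and the bulk of the implementation effort lies in making the interval-arithmetic evaluations of the $Y$, $Z_1^{finite}$, and $\|J\|_{\Xc_\eta}$ estimates efficient enough to remain computationally tractable at those truncation levels.
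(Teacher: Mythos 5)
Your proposal is correct and follows essentially the same route as the paper: a Fourier--Chebyshev continuation over a compact interval $[p_-,p_+]$ (the paper takes $[-4,6]$ with $\eta=1.01$), the Newton--Kantorovich Theorem~\ref{thm:NK} in the extended space $\Xc_{\eta}$, the positivity check of Remark~\ref{rem:checking_positivity} to certify $\lambda=\Lambda$, and Lemma~\ref{lem:ell1eta_derivatives} for the enclosures of $\Lambda'(0)$ and $\Lambda''(0)$. The only (harmless) deviation is in localizing the minimizer: the paper uses the rigorous control on $\Lambda'$ supplied by Lemma~\ref{lem:ell1eta_derivatives} together with the intermediate value theorem to bracket the zero of $\Lambda'$, whereas you fall back on the three-point convexity argument from Theorem~\ref{th:single_sol_pitchfork}; both are valid.
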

\begin{proof}
We consider $[p_-,p_+] = [-4,6]$ and compute an accurate truncated Fourier-Chebyshev approximation $(\barf,\barl)$ of a zero of~\eqref{eq:def_Fp} over this interval, depicted on Figure~\ref{fig:approximate_eigenpair}. Using the methodology described in Sections~\ref{sec:shear_general}-\ref{sec:shear_continuation}, and in particular Theorem~\ref{thm:NK} with the space $\Xc_{\eta}$, with  $\eta = 1.01$, we obtain that there is an exact eigenpair $(f,\lambda)$ with
\begin{align*}
\left\Vert (f,\lambda) - (\barf,\barl)\right\Vert_{\Xc_{\eta}} \leq 8\times 10^{-10}.
\end{align*}
We then successfully check that $f\geq 0$ (see Remark~\ref{rem:checking_positivity}), which implies that $\lambda = \Lambda$. Using Lemma~\ref{lem:ell1eta_derivatives}, we also get a rigorous control on $\Lambda'$, which allows us to find a narrow interval $[p_1,p_2]$ containing the zero of $\Lambda'$ (using the intermediate value theorem), and then to enclose $-\Lambda([p_1,p_2])$, which provides the announced value of $I(0)$. Using again Lemma~\ref{lem:ell1eta_derivatives}, we also get the announced enclosures of $\Lambda'(0)$ and $\Lambda''(0)$.

The computational parts of the proof can be reproduced by running the Matlab code \texttt{script\_shear\_single.m}, available at~\cite{BreGit23}, together with the Intlab toolbox~\cite{Rum99} for interval arithmetic.
\end{proof}

There is of course nothing special about the parameter values that were chosen for $\alpha$, $b$ and $\sigma$ in Theorem~\ref{th:single_sol_shear}, and one can, for instance, repeat the same procedure for different values of $b$. In Figure~\ref{fig:shear_various_bs}, we show rigorously validated enclosures of $I_b(0)$ for many different values of $b$. The rigorous computations required to produce this picture can be reproduced by running the Matlab code \texttt{script\_shear\_all.m}, available at~\cite{BreGit23}, together with the Intlab toolbox~\cite{Rum99} for interval arithmetic.

\begin{rmk}
\label{rem:I0whenlambdapos}
Note that in this example, we have a transition from negative to positive asymptotic Lyapunov exponent when $b$ increases (see the evolution of $\Lambda'_b(0)$ on Figure~\ref{fig:shear_various_bs}). After that transition, when the asymptotic Lyapunov exponent is positive, the rate function $\Ic_b$ still holds the information about the large deviations principle for the FTLE (see Theorem~\ref{thm:LDPs} and Corollary~\ref{cor:LDP}), but its specific value at $0$ loses its prominent meaning.
\end{rmk}

\begin{figure}[h!]
\begin{center}
\includegraphics[width=0.49\linewidth]{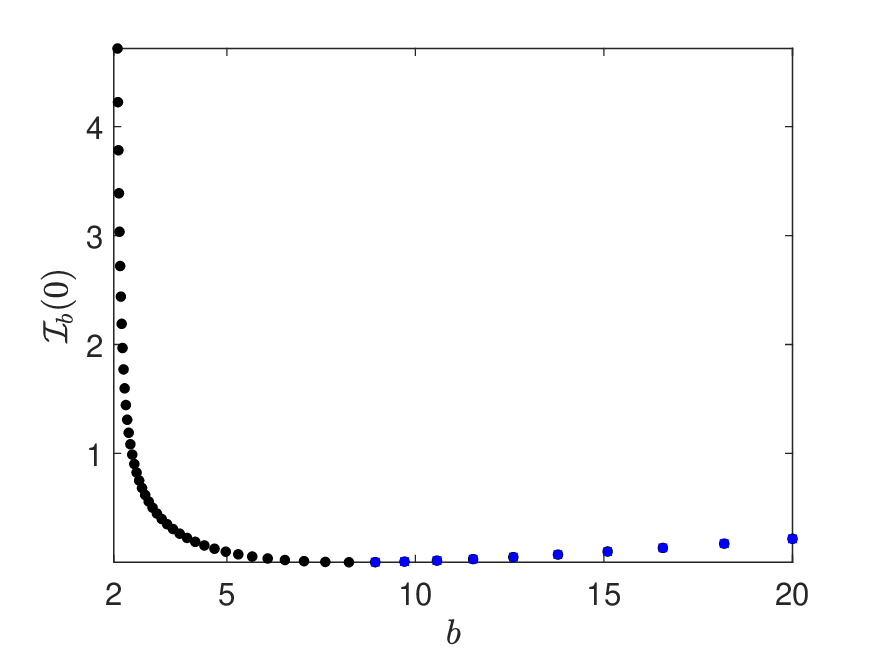}\\
\includegraphics[width=0.49\linewidth]{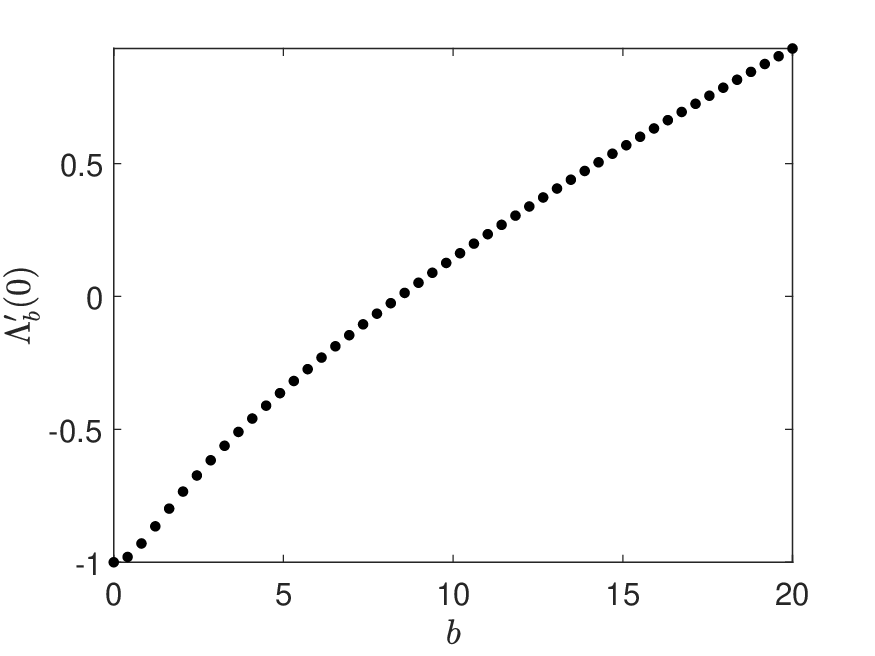}
\hfill
\includegraphics[width=0.49\linewidth]{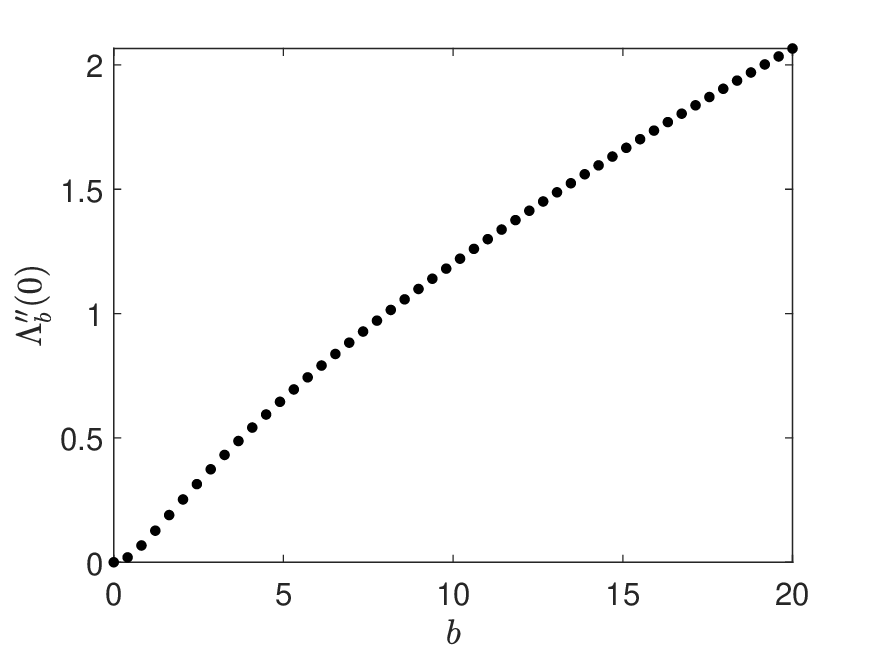}
\end{center}
\caption{Rigorous computations of $\Ic_b(0)$, $\Lambda'_b(0)$ and $\Lambda''_b(0)$ for~\eqref{eq:specvarEqusim_new_v2}, with $\alpha=1$, $\sigma=1$ and several values of $b$. The upper- and lower-bounds are too close to be depicted on the picture: all error bounds are below $9\times 10^{-9}$ for $I_b(0)$, below $2\times 10^{-7}$ for $\Lambda'_b(0)$, and below $6\times 10^{-4}$ for $\Lambda''_b(0)$. Blue square markers are used for $\Ic_b(0)$ when the corresponding asymptotic Lyapunov exponent (given by $\Lambda'_b(0)$) is positive, see Remark~\ref{rem:I0whenlambdapos}.}
\label{fig:shear_various_bs}
\end{figure}


There are some notable differences to the pitchfork example discussed in Section~\ref{sec:homotopy}, in particular comparing $\Ic_{\alpha}(0)$, as depicted in Figure~\ref{fig:pitchfork_various_alphas_intro}, and $\Ic_{b}(0)$, as depicted in Figure~\ref{fig:shear_various_bs}.
In the latter case, there is no local minimum for $\Ic_{b}(0)$ as long as the asymptotic Lyapunov exponent is negative, in contrast to the existence of the local minimum $\alpha_*$ (see Theorem~\ref{th:pitchfork_minimum}). This is due to the fact that increasing the shear factor $b$ globally adds more expansion to the system without being limited to a local effect around some (deterministic) equilibrium. There is then a global minimum at some $b_*$ where the asymptotic Lyapunov exponent crosses $0$ and, hence, $\Ic_{b_*}(0) =0$ (see Section~\ref{sec:prelim}).

\section{Outlook}
\label{sec:outlook}

In the course of this paper we have laid out the connection between positive FTLE and the values taken by the large deviations rate function. In particular, in many circumstances a transition to positive FTLE is accompanied by and detected from a qualitative change in the LDP rate function. 

Many questions arose during the writing of this paper, which we feel provide a compelling program for future work in various directions, including both the treatment of more general systems and alternative characterizations of FTLE transitions. We record some of these below.

\subsubsection*{Treatment of systems on unbounded domains}

As remarked elsewhere, the large deviations principle for unbounded domains we borrow from \cite{FStoltz20} essentially requires that one works with fully elliptic additive noise. This is somewhat unsatisfying as there should be some general LDP for hypoelliptic diffusions as well.

\subsubsection*{Treatment of systems in infinite dimensions}

    It is of natural interest to apply large deviations principles to other circumstances where there is a known FTLE transition, e.g., stochastic PDE such as the Chaffee-Infante equation as it undergoes a series of pitchfork bifurcations \cite{BlumenthalEngelNeamtu2021,BN23}. 
    To our best knowledge, however, several developments will be necessary even to prove a large deviations principle. At time of writing it is an ongoing project to lay out elements of the ergodic theory of the corresponding projective process on an infinite-dimensional domain. For recent advances in this direction we refer the reader to \cite{hairer2023spectral, rosati2024lyapunov}. 

\subsubsection*{Connection with quasi-stationary statistics}

Positive FTLE amid a negative asymptotic exponent, as we have studied in this paper, should be understood as a transient-in-time behavior for typical trajectories. Quasi-stationary ergodic theory is a powerful toolkit for the description of transient behavior one can attach to a stopping time, e.g., the statistics on conditioning remaining in a certain region of phase space \cite{Collet2013Quasi-StationaryDistributions, Pinsky1985AMeasures, Pinsky1985OnProcesses, Yaglom1947CertainProcesses}, with notable recent advances~\cite{Castro2021ExistenceApproach, Champagnat2016ExponentialQ-process, ChampagnatVillemonais2017}. A version of the multiplicative ergodic theorem for quasi-stationary dynamics \cite{castro2022lyapunov} allows to assign Lyapunov exponents in this setting, and it is not hard to show that in many cases, a positive quasi-stationary Lyapunov exponent implies positive FTLE for all sufficiently large times, even when the overall asymptotic Lyapunov exponent is negative. 

It is natural to study the extent of this connection and whether the converse is true: is there a set of assumptions guaranteeing one can assign to a system with positive FTLE a quasi-stationary statistic with a positive Lyapunov exponent?

\subsubsection*{CAP challenges}

As already mentioned in Section~\ref{sec:ToyModel}, models of the form~\eqref{eq:specvarEqusim_new_v2} appear naturally when studying the Hopf normal form, but with $B = \begin{pmatrix}
0 & \sigma \\ 0 & 0
\end{pmatrix}$.
In that case, still describing the projective process $s$ via the angular variable $\phi$, equation~\eqref{eq:Phi_model2_v2} becomes
\begin{align*}
 \rmd \phi =  b \cos^2 \phi \rmd t -  \sigma \sin\phi  \circ \rmd U_t,
\end{align*}
and the corresponding generator $L$ is no-longer uniformly elliptic:
\begin{align*}
L f = \frac{\sigma^2}{2}  \sin^2\phi \partial_\phi \left( \sin^2\phi \partial_\phi f \right)+ b \cos^2 \phi \partial_{\phi} f, \quad \phi \in (0,\pi).
\end{align*}
How to rigorously study the spectrum (of the associated tilted generator $L_p$) in this case seems to be an interesting challenge.

At the moment, it also remains unclear how rigorously control the spectrum of $L_p$ when $M$ is not compact and $L_p$ is not self-adjoint.

\section*{Acknowledgements}

We are indebted to the anonymous referee whose careful reading and insightful comments helped us significantly improve the final version of the manuscript. 

We heartily thank Michael Plum for encouraging discussions regarding the example studied in Sections~\ref{sec:pitchfork} and~\ref{sec:homotopy}.
Furthermore, we thank Sam Punshon-Smith and Dennis Chemnitz for fruitful discussions concerning moment Lyapunov exponents.

M.~Breden was supported by the ANR project CAPPS: ANR-23-CE40-0004-01.

A.~Blessing was supported by the DFG grant 543163250. Furthermore, A. Blessing acknowledges support from DFG CRC/TRR 388 {\em Rough Analysis, Stochastic Dynamics and Related Topics}, project A06 and DFG CRC 1432 {\em Fluctuations and Nonlinearities in Classical and Quantum Matter beyond Equilibrium}, project C10.

A.~Blumenthal was supported by the National Science Foundation under grants DMS-2009431 and DMS-2237360. 

M.~Engel has been supported by Germany’s Excellence Strategy – The Berlin Mathematics Research Center
MATH+ (EXC-2046/1, project ID: 390685689),  via projects AA1-8, AA1-18 and EF45-5. Furthermore, M.~Engel thanks the
DFG CRC 1114,  DFG SPP 2298 and the Einstein Foundation for support.

A.~Blumenthal and M.~Engel additionally thank MATH+ for supporting A.~Blumenthal as a MATH+ Visiting Scholar in the summer of 2023, in particular appreciating the support by Rupert Klein and Nicolas Perkowski.

\appendix


\section{Appendix}
\label{ldp}

Section \ref{subsec:noncompact2} states a large deviations principle (Theorem \ref{thm:noncompactLDP}) for the finite-time Lyapunov exponents of a stochastic flow on $\R^d$ under some conditions; a setting not covered by the classical work (e.g., \cite{BaxendaleStroock88}) on the subject which considers only the case of stochastic diffusions on a compact manifold. The LDP we desire is essentially a version of the results of \cite{FStoltz20}, extending the results for observables of an $\R^d$ diffusion to the present context of observables of a diffusion on $\R^d \times P^{d-1}$ diffusion. 

We state, without proof, a version of the G\"artner-Ellis theorem (Theorem \ref{thm:ge} below), an abstract tool for proving LDPs and briefly summarize how Theorem \ref{thm:ge} is used to prove Theorem \ref{thm:noncompactLDP}, referring to \cite{FStoltz20} for details. Not covered by \cite{FStoltz20} is the analyticity of the map $p \mapsto \Lambda(p)$ as in Theorem \ref{thm:noncompactLDP}(a)(iv), which is checked in Section \ref{subsec:analyticityLambdap}. 

The remainder of this Appendix concerns the proofs of Lemmas \ref{lem:weakStarContSamplePath} (Appendix \ref{app:weakStarCty}) and \ref{lem:specRelationGeneral4} (Appendix \ref{app:proofSpecRelation}). 
 



\subsection{The G\"artner-Ellis Theorem and Proof of Theorem \ref{thm:noncompactLDP}}\label{subsec:provingNCLDP}

Below we present a version of the G\"artner-Ellis theorem for empirical averages of scalar observables. 
In what follows, $(Z_t^\beta)$ is a family of random variables on a probability space $(\Omega, \Fc, \P)$ parametrized by $t \in [0,\infty),~ \b \in B$ for some abstract index set $B$. 
Let 
\[\Lambda_t^\beta(p) = \log \E e^{p Z_t^\beta} \,, \]
where throughout we assume the above expectation is defined and finite for all $t \geq 0, \beta \in B$. 

The following is a version of \cite[Theorem 2.3.6]{DemboZeitouni}. 
\begin{thm}\label{thm:ge}
  Assume that for all $p \in \R$, the limit 
  \begin{align}\label{eq:MLElimExistsApp}
    \Lambda(p) = \lim_{t \to \infty} \frac1t \Lambda_t^\beta(p t)
  \end{align}
  exists and is uniform over $\beta \in B$. Assume moreover that $\Lambda : \R \to \R$ is differentiable. Then, the family of random variables $(Z_t^\beta)$ satisfies a $\beta$-uniform large deviations principle (LDP) with good rate function $\Ic = \Lambda^*$. That is, for all measurable $K \subset \R$, 
  \[- \inf_{r \in K^\circ } \Ic(r) \leq \liminf_{t \to \infty} \inf_{\beta \in B} \frac1t \log \P(Z_t^\beta \in K) \leq \limsup_{t \to \infty} \sup_{\beta \in B} \frac1t \log \P(Z_t^\beta \in K) \leq - \inf_{r \in \overline{K}} \Ic(r) \,. \]
\end{thm}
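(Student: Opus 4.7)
The plan is to follow the classical proof of the G\"artner-Ellis theorem while tracking the uniformity in $\beta \in B$ at each step. Since every $\Lambda_t^\beta$ is convex as a log-MGF, the limit $\Lambda$ inherits convexity, and together with differentiability and finiteness on $\R$ this makes $\Lambda$ essentially smooth; the rate function $\Ic = \Lambda^*$ is then convex, lower semicontinuous, and good (superlinear growth of $\Lambda$, forced by finiteness on $\R$, makes $\Ic$'s sublevel sets compact). The proof then splits into an upper bound on closed sets via exponential Chebyshev and a lower bound on open sets via a change of measure.

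For the upper bound I would apply
\[
  \P(Z_t^\beta \geq r) \;\leq\; e^{-ptr}\, \E e^{p t Z_t^\beta} \;=\; \exp\bigl(-t(pr - t^{-1}\Lambda_t^\beta(pt))\bigr),
\]
with the analogous bound for $\{Z_t^\beta \leq r\}$. Taking $\limsup$ in $t$ and using the hypothesized uniform convergence of $t^{-1}\Lambda_t^\beta(pt)$ to $\Lambda(p)$, then optimizing over $p$, gives uniformly in $\beta$ the one-sided bound $-\Ic(r)$. Extending to a general closed $K \subset \R$ uses convexity of $\Ic$ (the leading contributions come from $r \in K$ closest to the minimizer $\Lambda'(0)$ of $\Ic$) together with the goodness of $\Ic$ to dispatch unbounded tails.

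The lower bound on an open $U \subset \R$ proceeds by tilting. For $r \in U$ with $\Ic(r) < \infty$, convexity and differentiability of $\Lambda$ give $p^* = p^*(r)$ with $\Lambda'(p^*) = r$ (approximating from the interior at boundary values of $\operatorname{range}(\Lambda')$, and noting $r \notin \overline{\operatorname{range}(\Lambda')}$ forces $\Ic(r) = \infty$). Define tilted measures $\rmd\P_{p^*,t,\beta} \propto e^{p^* t Z_t^\beta}\,\rmd\P$; the new log-MGF of $Z_t^\beta$ at $q$ is
\[
  t^{-1}\bigl(\Lambda_t^\beta((p^*+q)t) - \Lambda_t^\beta(p^*t)\bigr),
\]
which converges uniformly in $\beta$ to $\Lambda(p^*+q) - \Lambda(p^*)$. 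Hence $\E_{p^*,t,\beta}[Z_t^\beta] \to r$, and untilting yields
\[
  \P(Z_t^\beta \in U) \;\geq\; e^{-t(p^*r - t^{-1}\Lambda_t^\beta(p^*t) + o(1))}\, \P_{p^*,t,\beta}(Z_t^\beta \in U),
\]
so the desired $\liminf$ bound reduces to showing $\P_{p^*,t,\beta}(Z_t^\beta \in U) \to 1$ uniformly in $\beta$.

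The main obstacle is precisely this $\beta$-uniform concentration. The classical approach establishes it via a second-order Chebyshev bound on the tilted measure, which requires controlling the variance of $Z_t^\beta$ under $\P_{p^*,t,\beta}$, and that in turn rests on uniform convergence of $t^{-1}\Lambda_t^\beta$ on a neighborhood of $p^*$. Pointwise uniformity~\eqref{eq:MLElimExistsApp} plus convexity of each $\Lambda_t^\beta$ can be leveraged (via a standard convex-analytic upgrade) to yield uniform convergence on compact subsets of $\R$, which suffices. A secondary but standard technical point is the approximation of boundary values of $\operatorname{range}(\Lambda')$, handled by lower semicontinuity of $\Ic$.
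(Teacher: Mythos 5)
The paper does not actually prove Theorem \ref{thm:ge}: it cites \cite[Theorem 2.3.6]{DemboZeitouni} and declares the $\beta$-uniform extension straightforward, so the relevant comparison is with that classical proof. Your architecture (exponential Chebyshev upper bound, tilting lower bound, tracking uniformity in $\beta$ throughout) matches it, and the upper-bound half is fine. But there is a genuine gap in the concentration step of the lower bound. You propose to show $\P_{p^*,t,\beta}(Z_t^\beta\in U)\to 1$ via a second-moment Chebyshev bound, asserting that control of the tilted variance ``rests on uniform convergence of $t^{-1}\Lambda_t^\beta$ on a neighborhood of $p^*$.'' That implication fails: writing $g_t^\beta(q):=\frac1t\Lambda_t^\beta(qt)$, the variance of $Z_t^\beta$ under the tilt is $\frac1t g_t^{\beta\,\prime\prime}(p^*)$, and uniform convergence of the convex functions $g_t^\beta$ to $\Lambda$ says nothing about their second derivatives at a point. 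For instance $g_t(q)=\Lambda(q)+\frac1t h\bigl(t(q-p^*)\bigr)$ with $h$ a bounded smooth convex bump converges uniformly to $\Lambda$, yet $\frac1t g_t''(p^*)=\frac1t\Lambda''(p^*)+h''(0)\not\to0$. So the variance route is not available under the stated hypotheses, and the convex-analytic upgrade you invoke (which does give uniform convergence on compacts and convergence of first derivatives) cannot rescue it.

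The standard repair --- and what Dembo--Zeitouni actually do --- is to obtain the concentration from the exponential (first-order) bound you have already proved: the tilted family has scaled log-MGFs converging, $\beta$-uniformly and directly from \eqref{eq:MLElimExistsApp}, to $\tilde\Lambda(q)=\Lambda(p^*+q)-\Lambda(p^*)$, which is finite on $\R$ and differentiable with $\tilde\Lambda'(0)=r$; hence $\tilde\Lambda^*$ is a good rate function vanishing only at $z=r$, and your own upper bound applied to the tilted family on the closed set $\{|z-r|\ge\delta\}$ gives $\P_{p^*,t,\beta}(|Z_t^\beta-r|\ge\delta)\to0$ uniformly in $\beta$, in fact exponentially fast. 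With that substitution the argument closes. A minor additional slip: finiteness of $\Lambda$ on $\R$ does not force superlinear growth of $\Lambda$ (it can be linear); goodness of $\Ic$ instead follows from finiteness of $\Lambda$ near $0$, which forces at-least-linear growth of $\Ic$ and hence compact sublevel sets.
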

Here, the \emph{Legendre-Fenchel transform} $\Lambda^*$ of $\Lambda$ is defined as 
\[\Lambda^*(r) = \sup_{p \in \R} \left(p r - \Lambda(p)\right) \,. \]
Observe that $\Lambda^*$ is automatically nonnegative since $\Lambda(0) = 0$. We say that $\Ic = \Lambda^*$ is a \emph{good rate function} if its level sets are compact for the considered topology. Using H\"older's inequality, our assumptions imply that $\Lambda$ is convex and, hence, $\Lambda^*$ is convex and a good rate function \cite[Lemma 2.3.9]{DemboZeitouni}. 

A version of Theorem \ref{thm:ge}, absent the parameter $\beta \in B$, is proved in \cite[Section 2.3]{DemboZeitouni}. The extension to the `$\beta$-uniform' version above is straightforward and omitted.

Let $(X_t, s_t)$ denote the process on $PM \cong \R^d \times P^{d-1}$ in the setting of Section \ref{subsec:noncompact2}, and let Assumption \ref{ass:noncompactLDP2} hold. 
We intend to apply Theorem \ref{thm:ge} to the family of random variables
\begin{align}\label{eq:defnZt}
  Z_t^\beta = \frac1t \int_0^t g(X_\tau, s_\tau) d \tau \, , 
\end{align}
where $B \subset PM$ is some compact subset, and $\beta = (X_0, s_0) \in B$. Here, $g : PM \to \R$ will belong to some appropriate class of observables, our ultimate goal being to set $g(x, s) = Q(x, s) = \langle s, (\rmD f_0)_x s \rangle$ so that $Z_t^\beta = \lambda_t(X_0, s_0)$.

Like in the sketch of the proof of Theorem \ref{thm:LDPs} presented in Section \ref{subsec:compact2}, convergence of $\Lambda(p)$ hinges on the spectral theory of the twisted semigroup $S^t = S^t_g$ acting on $\psi : \R^d \times P^{d-1} \to \R$ by 
\[S^t \psi(X, s) = \E_{(X, s)} \left[ \psi(X_t, s_t) ~ \exp \int_0^t g(X_\tau, s_\tau) \rmd \tau \right] \,. \]
As mentioned in Section \ref{subsec:noncompact2}, we conduct this spectral theory on the weighted uniform space $C_W$ with norm $\| \varphi \|_{C_W} = \sup_{(x, s) \in PM} | \varphi(x, s)| / W(x)$, where
\[W = e^{\theta V}\]
for $\theta \in (0,1)$ fixed, and $V$ is as in Assumption \ref{ass:noncompactLDP2}. The following is a summary of the results of Section 6 of \cite{FStoltz20}

\begin{thm}[\cite{FStoltz20}]\label{thm:specTheoryAPP}
  Let $g \in C_W$ be smooth and assume moreover that $\sup_{s \in P^{d-1}} |g(\cdot, s)| \ll | \sigma^T \nabla V|^2$. 
  \begin{itemize}
    \item[(a)] For all $t > 0$, it holds that $S^t$ is a bounded linear operator on $C_W$. 
    \item[(b)] There exist $\Lambda_g \in \R, r > 0$ such that for all $t > 0$, the operator $S^t : C_W \to C_W$ is compact and admits as a simple eigenvalue $e^{t \Lambda_g} > 0$ with the property that $\sigma(S^t) \setminus \{ e^{t \Lambda_g}\} \subset B_{r^t} (0)$.
    \item[(c)] There exists $\psi_g \in C_W$, $\psi_g > 0$ for which $S^t \psi_g = e^{t \Lambda_g} \psi_g$ for all $t > 0$. 
  \end{itemize}
\end{thm}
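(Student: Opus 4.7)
The proof would follow the general strategy of \cite{FStoltz20}, whose main result concerns weighted spectral theory for twisted semigroups of observables on $\R^d$; the extension to observables on $PM = \R^d \times P^{d-1}$ is, because of the compactness of the projective fiber, essentially notational. My plan is to organize the argument in three steps corresponding to (a), (b) and (c), namely: boundedness on $C_W$ via an exponential Lyapunov supersolution, compactness via parabolic regularity together with tightness at infinity, and finally extraction of a simple, isolated, dominant eigenvalue by a Krein--Rutman / Birkhoff--Hopf argument exploiting positivity and irreducibility.

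For part (a), the key is a Feynman--Kac / Lyapunov computation. Writing $W = e^{\theta V}$, I would compute the action of the tilted generator $G + g$ on $W$, yielding
\begin{equation*}
\frac{(G+g)W}{W} \,=\, \theta\left(GV + \frac{\theta}{2}|\sigma^T \nabla V|^2\right) + g.
\end{equation*}
By Assumption \ref{ass:noncompactLDP2}(1)(c) applied with $\eta = \theta$, the first term is $\sim -\theta |\sigma^T\nabla V|^2$, and by hypothesis $g \ll |\sigma^T \nabla V|^2$, so the right-hand side is uniformly bounded above by a constant $K$. A Gronwall-type argument applied to the Feynman--Kac representation then gives
\begin{equation*}
\E_{(X,s)}\!\left[W(X_t)\exp\!\int_0^t g(X_\tau,s_\tau)\,\rmd\tau\right] \le e^{Kt}W(X),
\end{equation*}
which yields $\|S^t\|_{C_W \to C_W} \le e^{Kt}$.

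For part (b), I would establish compactness by a two-scale argument. Parabolic regularity (under Assumption \ref{ass:projStat}(a)) gives that $S^t$ smooths bounded sets of $C_W$ into Hölder-regular functions on compact subsets of $PM$, hence equicontinuous there; tightness at infinity is obtained by upgrading the Lyapunov bound with a companion function $W_1$ satisfying $W \ll W_1$ but still controlled by the drift, so that $S^t$ maps the unit ball of $C_W$ into a precompact subset of $C_W$ via Arzelà--Ascoli. This is the point where the argument leans most heavily on \cite{FStoltz20}, since the tightness estimates in the non-compact setting are the technical heart of that paper. For part (c), compactness of $S^t$ combined with positivity (the Feynman--Kac representation maps nonnegative functions to nonnegative functions) permits application of the Krein--Rutman theorem, producing a positive dominant eigenvalue $e^{t\Lambda_g}$ with strictly positive eigenfunction $\psi_g$; strict positivity of $\psi_g$, and hence of all iterates of $S^t$ acting on nonnegative nonzero functions in $C_W$, follows from irreducibility via Assumption \ref{ass:projStat}(b) and controllability on $PM$. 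Simplicity and spectral gap are then consequences of a Birkhoff--Hopf contraction argument applied to the Hilbert cone metric: positive compact operators that uniformly improve positivity admit a spectral gap, giving $\sigma(S^t) \setminus \{e^{t\Lambda_g}\} \subset B_{r^t}(0)$ for some $r < e^{\Lambda_g}$. Consistency of the eigenvalues across different $t$ is handled using the semigroup identity $S^{t+s} = S^t S^s$, which forces the dominant eigenvalue to be of the form $e^{t\Lambda_g}$.

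The main obstacle, as anticipated in \cite{FStoltz20}, is the compactness step (b): establishing tightness at infinity for the tilted semigroup requires carefully exploiting the strictness of the dissipation $|\sigma^T\nabla V|^2$ relative to the observable $g$, and in particular using that $g \ll |\sigma^T\nabla V|^2$ rather than merely $g \lesssim |\sigma^T\nabla V|^2$. The presence of the projective coordinate $s \in P^{d-1}$ does not add substantial difficulty because $P^{d-1}$ is already compact, so the weight $W$ acts only on the $X$-coordinate; nonetheless some care is required in verifying that smoothing of the projective component is provided by Assumption \ref{ass:projStat}(a) on $PM$ rather than just on $M$.
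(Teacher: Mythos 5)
Your sketch is correct and reconstructs the strategy of the cited reference: the paper itself does not reprove this theorem but simply imports it from \cite{FStoltz20}, adding only two bridging observations, and your three steps (exponential Lyapunov supersolution for boundedness, smoothing plus tightness for compactness, Krein--Rutman/Birkhoff--Hopf for the simple dominant eigenvalue) are exactly the architecture of \cite[Section 6]{FStoltz20}. Your computation $\tfrac{(G+g)W}{W}=\theta\bigl(GV+\tfrac{\theta}{2}|\sigma^T\nabla V|^2\bigr)+g$ matches the identity the paper records in its remark after Theorem \ref{thm:noncompactLDP}, and your handling of the projective fiber agrees with the paper's first discrepancy remark (compactness of $P^{d-1}$ makes the extension essentially notational). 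The one point the paper does prove explicitly, and which your proposal only covers implicitly, is the second discrepancy: \cite{FStoltz20} works in the space $B^\infty_W$ of merely measurable functions with finite weighted norm, so one must separately verify that $S^t$ maps $C_W$ into $C_W$, i.e.\ that $S^t\varphi$ is continuous for continuous $\varphi$. The paper does this via the decomposition $S^t\varphi = S^{t/2}({\bf 1}_{K_a}S^{t/2}\varphi) + S^{t/2}({\bf 1}_{K_a^c}S^{t/2}\varphi)$ together with the Lyapunov bound $S^tW\le e^{-at}W + c_{a,t}{\bf 1}_{K_{a,t}}$, exhibiting $S^t\varphi$ as a $C_W$-limit of continuous functions; your parabolic-regularity argument in step (b) would yield this as a byproduct, but if you are leaning on the statements of \cite{FStoltz20} rather than redoing their estimates, this closedness of $C_W$ inside $B^\infty_W$ under $S^t$ is the one lemma you would still need to supply.
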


\begin{rmk}
    We address briefly two discrepancies between Theorem \ref{thm:specTheoryAPP} and the results in \cite[Section 6]{FStoltz20}. 

    \begin{itemize}
      \item[(a)] The paper \cite{FStoltz20} considers only diffusions on $\R^d$ and not on $P \R^d$. The extension of their methods to $P \R^d = \R^d \times P^{d-1}$ is straightforward, since the factor $P^{d-1}$ is compact and does not need to be controlled by the Lyapunov function $W$. 
      \item[(b)] Another difference is that the results of \cite[Section 6]{FStoltz20} treat the spectral theory of $S^t$ on the space $B^\infty_W$ of measurable functions $f$ for which $f / W$ is bounded, with the same norm as that for $C_W$. One way to proceed is to work, as in \cite{FStoltz20}, by establishing parts (a) -- (c) with $B_W^\infty$ replacing $C_W$. One can then deduce the corresponding statements for $C_W$ on observing that $C_W \subset B_W^\infty$ is a closed subspace. We establish below the continuity of the mapping $(x,s)\mapsto S^t \varphi(x,s)$ for $\varphi\in C_W$.
    \end{itemize}
\end{rmk}

	\begin{lem}
		For $\varphi \in C_W$, it holds that $S^t \varphi \in C_W$, i.e.~in particular the mapping $(x,s)\mapsto S^t \varphi(x,s)$ is continuous. 
	\end{lem}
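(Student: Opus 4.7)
The boundedness $\|S^t\varphi\|_{C_W}<\infty$ already follows from Theorem~\ref{thm:specTheoryAPP}(a) applied on the larger space $B^\infty_W\supset C_W$, together with the obvious inequality $|\varphi|\leq \|\varphi\|_{C_W}W$. So the only thing to prove is that $(x,s)\mapsto S^t\varphi(x,s)$ is continuous. I will do this by combining a.s.~continuous dependence of the solution on the initial data with a dominated-convergence argument, the domination being the delicate point because of the unbounded exponential factor.

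First, let $(x_n,s_n)\to(x,s)$ in $PM$. Since the vector fields defining the $(X_t,s_t)$ process are smooth and Assumption \ref{ass:drift} (implied by Assumption \ref{ass:noncompactLDP2}) prevents explosion, standard results on stochastic flows (e.g.~Kunita's theorem) give that $(x',s')\mapsto (X^{(x',s')}_t,s^{(x',s')}_t)$ is a.s.~continuous for each fixed $t$, so that
\[
\varphi(X^{(x_n,s_n)}_t,s^{(x_n,s_n)}_t)\exp\!\int_0^t g(X^{(x_n,s_n)}_\tau,s^{(x_n,s_n)}_\tau)\,\rmd\tau
\;\longrightarrow\;
\varphi(X^{(x,s)}_t,s^{(x,s)}_t)\exp\!\int_0^t g(X^{(x,s)}_\tau,s^{(x,s)}_\tau)\,\rmd\tau
\]
almost surely, using that $\varphi\in C_W$ is continuous. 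To pass to the limit under the expectation, I would produce a $\P$-uniform integrability bound over all $n$ by showing the random variables are bounded in $L^q(\P)$ for some $q>1$.

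To get such an $L^q$ bound, I exploit the freedom in the choice of the weight $W$: Assumption \ref{ass:noncompactLDP2}(1)(c) holds for every $\eta\in(0,1)$, so I pick $\theta'\in(\theta,1)$ with $q:=\theta'/\theta>1$ and set $W':=e^{\theta' V}=W^q$. Applying Theorem~\ref{thm:specTheoryAPP}(a) on $B^\infty_{W'}$ to the twisted semigroup with tilt $qg$ (note $q\sup_s|g(\cdot,s)|\ll|\sigma^T\nabla V|^2$ is unchanged) yields a constant $C_t>0$ such that
\[
\E_{(y,s')}\!\left[\,W(X_t)^q\exp\!\int_0^t qg(X_\tau,s_\tau)\rmd\tau\right]\;\leq\; C_t\,W'(y,s'),\qquad\forall (y,s')\in PM.
\]
Since $(x_n,s_n)$ lives in a compact set, the right-hand side is bounded uniformly in $n$, which, combined with $|\varphi|\leq \|\varphi\|_{C_W}W$, provides the desired uniform $L^q$-bound on the integrands. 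Dominated convergence (with a uniformly integrable family in place of a single dominating function) then gives $S^t\varphi(x_n,s_n)\to S^t\varphi(x,s)$.

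The main obstacle is precisely this uniform integrability: the naive bound by $W\cdot \exp\int g$ is not a priori in $L^1$, and decoupling the two factors via Hölder requires an inequality that applies simultaneously to both with a common integrability margin. The trick is to use the same drift/Lyapunov structure a second time, with a slightly larger exponent, which Assumption \ref{ass:noncompactLDP2}(1)(c) freely supplies.
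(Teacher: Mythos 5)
Your argument is correct in substance, but it is a genuinely different route from the one the paper takes. The paper's proof is ``soft'': it uses the drift inequality $S^{t/2}W\leq e^{-at/2}W+c_{a,t/2}\mathbf{1}_{K_{a,t/2}}$ from \cite[Lemma 6.2]{FStoltz20} to split $S^t\varphi=S^{t/2}(\mathbf{1}_{K_a}S^{t/2}\varphi)+S^{t/2}(\mathbf{1}_{K_a^c}S^{t/2}\varphi)$, invokes \cite[Lemma 6.4]{FStoltz20} to get continuity of the compactly-localized piece, bounds the remainder by $e^{-at/2}\|\varphi\|_{C_W}$ in $C_W$-norm, and lets $a\to\infty$ so that $S^t\varphi$ is a uniform (weighted) limit of continuous functions; no pathwise analysis appears. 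You instead argue pathwise: continuity in the initial condition plus uniform integrability, with the key point being the $L^q$ domination obtained by rerunning the Lyapunov estimate with the boosted weight $W'=W^q=e^{\theta' V}$ and tilt $qg$ — a legitimate move, since Assumption \ref{ass:noncompactLDP2}(1)(c) holds for every $\eta\in(0,1)$ and the condition $\sup_s|qg(\cdot,s)|\ll|\sigma^T\nabla V|^2$ is scale-invariant. This buys a self-contained probabilistic proof that does not need the localization lemma \cite[Lemma 6.4]{FStoltz20}, at the cost of needing continuity of the solution in the initial data. On that last point you are slightly too quick: for smooth but not globally Lipschitz coefficients, a.s.\ continuity of the flow at fixed $t$ is not an off-the-shelf consequence of Kunita's theorem without a localization/stopping argument. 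However, this is easily repaired: the functional $\omega\mapsto\varphi(X_t,s_t)\exp\int_0^t g(X_\tau,s_\tau)\,\rmd\tau$ is continuous on path space, so the weak-$*$ continuity of $(x,s)\mapsto{\bf P}_{(x,s)}$ already established in Lemma \ref{lem:weakStarContSamplePath} (proved in Appendix \ref{app:weakStarCty} precisely by such a localization), combined with your uniform $L^q$ bound, yields convergence of the expectations without any a.s.\ statement. With that substitution your proof is complete.
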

	\begin{proof}
		Let $\varphi \in C_W$ be fixed. Then $S^t:B^\infty_W\to B^\infty_W$.
		Here we check the continuity of the mapping $(x, s) \mapsto S^t \varphi(x, s)$.~According to~\cite[Lemma 6.2]{FStoltz20} for any $t>0$, $a>0$ we can find a constant $c_{a,t}\geq 0$ and a compact set $K_{a,t}\subset PM$ such that
		\begin{align} \label{eq:controlStpK123} S^t W(x, s) \leq e^{-a t} W(x)+c_{a, t} {\bf 1}_{K_{a, t}}(x, s) \, . \end{align}
		Here, ${\bf 1}_{K_{a, t}}$ is the indicator function for $K_{a, t}$. 
		We set $K_a := K_{a, t/2}$, use the semigroup property and decompose
		\[S^t \varphi = \underbrace{S^{t/2} ( {\bf 1}_{K_{a}} S^{t/2} \varphi)}_{\bar \varphi_a} + S^{t/2} ( {\bf 1}_{K_{a}^c} S^{t/2} \varphi) \, .\]
		Based on~\cite[Lemma 6.4]{FStoltz20}
		we infer that $\bar \varphi_a$ is continuous.
		Using~\eqref{eq:controlStpK123} we derive a bound for $S^{t/2} ({\bf 1}_{K^c_a} S^{t/2}\varphi)$ which implies that
		\[\|S^t \varphi - \bar \varphi_a\|_{C_W} \leq e^{- a t/2} \| \varphi\|_{C_W} \,. \]
		Taking $a \to \infty$, we conclude that $S^t \varphi$ is the $C_W$-limit of continuous functions, hence, in particular continuous. 
	\end{proof}

\subsection{Analyticity of $p \mapsto \Lambda(p)$}\label{subsec:analyticityLambdap}

Throughout this subsection we work under the Assumptions of Theorem~\ref{thm:noncompactLDP}.

    To show the analyticity of $\Lambda(\cdot)$ we use the fact that $\Lambda(p)$ is the principal eigenvalue of the tilted generator of the Feynman-Kac semigroup. Consequently, we have that
    \[ S^p_t \psi_p = e^{t\Lambda(p)}\psi_p, \]
    for an eigenfunction $\psi_p$. 
    Here 
    \[  S^p_t =\E_{(X_0,s_0)} [\psi(X_t,s_t) e^{p\int_0^t Q(X_\tau,s_\tau)~\rmd\tau} ].   \]
    Similarly to~\cite[Lemma 6.6, 6.7]{FStoltz20}, we show that the operator-valued function $p\mapsto S^t_p$ is analytic which implies the analyticity of the principal isolated eigenvalue $\Lambda(p)$ in $p$. ~The following computations are based on the supermartingale property of $W(X_t) e^{-\int_0^t \frac{GW}{W}(X_\tau)\rmd \tau }$. 
We now show the complex differentiability of the map $p\mapsto S^t_p$ in the operator norm on $C_W$. To this aim, for $\psi, Q\in C_W$ 
    $ h\in\C$ with $|h|\leq R$ for $R>0$, and show that there exists a constant $C>0$, such that \begin{align}\label{derivative} \Big\| S^{p+h}_t - S^p_t - h \E_{(X_0,s_0)} \Big[ \psi(X_t,s_t) e^{p\int_0^t Q(X_\tau,S_\tau)\rmd \tau} \int_0^t Q(X_\tau,s_\tau)\rmd \tau \Big]\Big\|_{\mathcal{L}(C_W)}\leq C |h|^2 .
    \end{align}
   
First of all, we note that $\E_{(X_0,s_0)}\Big[ \psi(X_t,s_t) e^{p\int_0^t Q(X_\tau,s_\tau)\rmd \tau} \int_0^t Q(X_\tau,s_\tau)\rmd \tau \Big]$ is a bounded operator on $C_W$ for $Q\in C_W$, since for $(X_0,s_0)\in PM$ we have
   	\begin{align*}
   	&   \E_{(X_0,s_0)}\Big|\Big[ \psi(X_t,s_t) e^{p\int_0^t Q(X_\tau,s_\tau)\rmd \tau} \int_0^t Q(X_\tau,s_\tau)\rmd \tau \Big] \Big| \\& \leq \|\psi\|_{C_W} \E_{(X_0,s_0)} \Big[    W(X_t) e^{|p|\int_0^t |Q(X_\tau,s_\tau)|\rmd \tau} \int_0^t |Q(X_\tau,s_\tau)|\rmd \tau \Big] \\
   	& \leq \|\psi\|_{C_W} \E_{(X_0,s_0)} \Big[ W(X_t) e^{(|p|+1)\int_0^t |Q(X_\tau,s_\tau)|~\rmd \tau }  \Big] 
   	,
   	\end{align*}
   	where we simply used that $x\leq e^x$ for $x\in \R.$
   	Since we are working under the assumptions of Theorem~\ref{thm:noncompactLDP}, we know that $Q=\rmD f_0$ and consequently $|Q| \ll |\sigma^T \nabla V|^2$. Furthermore, for $\eta\in(0,1)$ we have that \[ \frac{GW}{W} \sim -\eta |\sigma^T \nabla V|^2. \]
   	Putting these together we obtain for a constant $c$ that 
   	\begin{align*}
   	&   \E_{(X_0,s_0)}\Big|\Big[ \psi(X_t,s_t) e^{p\int_0^t Q(X_\tau,s_\tau)\rmd \tau} \int_0^t Q(X_\tau,s_\tau)\rmd \tau \Big] \Big| \\
   	& \leq \|\psi\|_{C_W} e^{(|p|+1) ct } \E_{(X_0,s_0)} \Big[ W(X_t) e^{\int_0^t -\frac{GW}{W}(X_\tau)~\rmd \tau} \Big]\\
   	& \leq \|\psi\|_{C_W} e^{(|p|+1) ct } W(X_0),
   	\end{align*}
   	by the supermartingale property of $W(X_t) e^{-\int_0^t \frac{GW}{W}(X_\tau)\rmd \tau }$.\\
   We return to the complex differentiability of the map $p\mapsto S^t_p$ and further compute for $\psi\in C_W$ and $(X_0,s_0)\in PM$
   \begin{align*}
   \begin{split}  
   \Big|  \E_{(X_0,s_0)} \Big[  \psi(X_t,s_t) e^{p\int_0^t Q(X_\tau,s_\tau)\rmd \tau} e^{h\int_0^t Q(X_\tau,s_\tau)\rmd \tau } -\psi(X_t,s_t) e^{p\int_0^t Q(X_\tau,s_\tau)\rmd \tau}\\ -  h \psi(X_t,s_t) e^{p\int_0^t Q(X_\tau,s_\tau)\rmd \tau} \int_0^t Q(X_\tau,s_\tau)\rmd \tau \Big]  \Big|
   \end{split}
   \\
   & \leq \|\psi\|_{C_W} \E_{(X_0,s_0)}\Big[W(X_t)e^{|p|\int_0^t|Q(X_\tau,s_\tau)|\rmd \tau} \Big| 
   e^{h\int_0^t Q(X_\tau,s_\tau)\rmd \tau} -1 -h\int_0^t Q(X_\tau,s_\tau)\rmd \tau \Big| \Big]\\
   &\leq C \frac{|h|^2}{2}\|\psi\|_{C_W} \E_{(X_0,s_0)} \Big[W(X_t) e^{ (|p|+|h|) \int_0^t Q(X_\tau,s_\tau)~\rmd \tau }\Big(\int_0^t Q(X_\tau,s_\tau)\rmd \tau\Big)^2  \Big]\\
   & \leq C |h|^2 \|\psi\|_{C_W} \E_{(X_0,s_0)} [W(X_t) e^{(|p|+|h|+1) \int_0^t Q(X_\tau,s_\tau)\rmd \tau }].
   \end{align*}
   The third line was obtained applying Taylor's formula (with Lagrange remainder) to the complex-valued function $f(h)=e^{h\int_0^t Q(X_\tau,s_\tau)~\rmd\tau}$ and the last line follows due to the inequality $x^2/2\leq e^x$ for $x\geq 0$. 
   As before, using that $|Q|\ll |\sigma^T \nabla V|^2$, $GW/W\sim -\eta |\sigma^T \nabla V|^2$ together with the supermartingale property of $W(X_t) e^{-\int_0^t\frac{GW}{W}(X_\tau)\rmd \tau}$, we can bound the conditional expectation by  $e^{\tilde{c}t}W(X_0)$, for a constant $\tilde{c}>0$, proving~\eqref{derivative}.
   This means that $p\mapsto S^t_p$ is complex differentiable and therefore analytic with respect to the operator norm on $C_W$. 
Since $p\mapsto S^p_t$ is analytic, then so is the principal eigenvalue $p\mapsto e^{t\Lambda(p)}$, and hence so is the mapping $p\mapsto \Lambda(p)$. 
   
\subsection{Proof of Lemma~\ref{lem:weakStarContSamplePath}}\label{app:weakStarCty}
We provide here a proof of the weak-$*$ continuity of the law $(X,s)\mapsto \mathbf{P}_{(X,s)}$ under the assumptions of Lemma~\ref{lem:weakStarContSamplePath}. 
\begin{defn}
Let $\P_z$ denote the law of a stochastic process $Z$ on a separable Banach space $E$ starting in $z\in E$. The laws $(\P_z)_{z\in E}$ are called weakly continuous if for any sequence $(z_n)\subset E$ that converges to $z\in E$, the measures $\P_{z_n}\to \P_{z}$ converge weakly, i.e. 
\[ \int \phi \rmd{\P_{z_n}} \to \int \phi \rmd {\P_z},\] 
for every bounded, continuous functional $\phi:E\to \R$. 
	\end{defn}
In our case we work on the path space $E:=C([0,1];\R^N)$ and analyze the convergence of the laws $\P_{z_n}\to \P_z$ with respect to the weak-$*$ topology on the spaces of measures on $E$. This is referred to as the weak-$*$ continuity of $z\mapsto \P_z$.  

\medskip 

\noindent {\bf Setting (1): $M$ is compact}

\medskip

Using the Nash embedding theorem,  isometrically embed $PM$ into $\R^N$ for some $N$ sufficiently large.  Construct extensions $F_i$ of the original vector fields $\tilde{f}_i$ in such a way so as to satisfy the global Lipschitz conditions
\begin{gather} \label{eq:appRegcondsCpct}\begin{gathered}
  \sum_{i = 0}^m |F_i(z_1) - F_i(z_2)| + |H(z_1) - H(z_2)|  \leq B |z_1 - z_2| \,, \quad \text {and} \\ 
  \sum_{i = 0}^{m} |F_i(z)| + |H(z)| \leq B (1 + |z|) \, , 
\end{gathered}
\end{gather}
for a constant $B>0$, where $H= (H_j)_{j=1}^n$ is the vector field given by
 $$H_j(x) = \sum_{i=1}^m \sum_{k=1}^n \frac{\partial (F_i)_j(x)}{\partial x_k}(x) (F_i)_k(x) .$$
This translates the problem of controlling the law of $(X_\bullet, s_\bullet) = (X_t, s_t)_{t \in [0,1]}$ to a problem concerning the law $(Z_\bullet)$ for a diffusion $(Z_t)$ of the form
\begin{align} \label{eq:appSDE}\rmd Z_t = F_0(Z_t) \rmd t + \sum_{i = 1}^m F_i(Z_t) \circ \rmd W_t^i\end{align}
on $\R^N, N \geq 1$, for standard Brownian motions $(W_t^i)$.
In this case, the following suffices for concluding Lemma \ref{lem:weakStarContSamplePath}. 
\begin{prop}
  Suppose \eqref{eq:appRegcondsCpct} holds.  
  Then, \eqref{eq:appSDE} admits unique strong solutions for all initial $Z_0$, and moreover, $z \mapsto {\bf P}_{z}$ varies continuously in the weak-$*$ topology on the space of measures on $C([0,1], \R^N)$.  
\end{prop}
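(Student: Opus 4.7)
The plan is to first establish existence and uniqueness of strong solutions via classical It\^o theory, and then use a coupling argument together with Gronwall's inequality to deduce weak-$*$ continuity of $z \mapsto {\bf P}_z$.

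Converting the Stratonovich SDE \eqref{eq:appSDE} to It\^o form yields
\[
\rmd Z_t = b(Z_t) \rmd t + \sum_{i=1}^m F_i(Z_t) \rmd W_t^i, \qquad b(z) := F_0(z) + \tfrac12 H(z),
\]
where the correction term $H$ is precisely the one defined in the statement. The bounds in \eqref{eq:appRegcondsCpct} ensure that both $b$ and the $F_i$ are globally Lipschitz with at most linear growth on all of $\R^N$. Consequently, by classical results for It\^o SDEs with Lipschitz coefficients (e.g., \cite[Theorem 5.2.1]{Oksendal}), \eqref{eq:appSDE} admits unique strong solutions $(Z^z_t)_{t \in [0,1]}$ for every initial condition $z \in \R^N$, with $\E[\sup_{t \in [0,1]}|Z^z_t|^2] < \infty$ locally uniformly in $z$.

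For the continuity statement, I would fix $z \in \R^N$ and a sequence $z_n \to z$, and realize $Z^{z_n}$ and $Z^z$ on a common probability space driven by the same Brownian motion. Setting $\Delta^n_t := Z^{z_n}_t - Z^z_t$, one has
\[
\Delta^n_t = (z_n - z) + \int_0^t \bigl[b(Z^{z_n}_s) - b(Z^z_s)\bigr] \rmd s + \sum_{i=1}^m \int_0^t \bigl[F_i(Z^{z_n}_s) - F_i(Z^z_s)\bigr] \rmd W^i_s.
\]
Applying the Burkholder--Davis--Gundy inequality to the stochastic integrals, Cauchy--Schwarz to the drift, and the Lipschitz bounds in \eqref{eq:appRegcondsCpct}, there exist constants $C_1, C_2 > 0$ (depending only on $B$ and $m$) such that for every $t \in [0,1]$,
\[
\E\Big[\sup_{s \in [0,t]} |\Delta^n_s|^2\Big] \leq C_1 |z_n - z|^2 + C_2 \int_0^t \E\Big[\sup_{u \in [0,s]} |\Delta^n_u|^2\Big] \rmd s.
\]
Gronwall's inequality then yields $\E[\sup_{s \in [0,1]}|\Delta^n_s|^2] \leq C_1 e^{C_2}\, |z_n - z|^2 \to 0$ as $n \to \infty$.

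Thus $Z^{z_n} \to Z^z$ in $L^2$, hence in probability, as $C([0,1],\R^N)$-valued random variables in the uniform topology. Since convergence in probability on a separable metric space implies weak convergence of the induced laws, ${\bf P}_{z_n} \to {\bf P}_z$ in the weak-$*$ topology, completing the proof. No step is genuinely difficult; the main point that needs care is that \eqref{eq:appRegcondsCpct} delivers \emph{global} (not merely local) Lipschitz bounds on $\R^N$, which is crucial both for strong well-posedness and for the Gronwall estimate to close with a constant independent of $z_n$. This is precisely what the Nash embedding together with compactness of $PM$ and a suitable extension of the vector fields is designed to ensure.
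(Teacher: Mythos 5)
Your proposal is correct and follows essentially the same route as the paper: couple the two solutions through the same Brownian motion, control $\E\bigl[\sup_{t\in[0,1]}|Z_t^{z_n}-Z_t^{z}|^2\bigr]$ by the Lipschitz bounds, a maximal inequality for the stochastic integrals (you use Burkholder--Davis--Gundy where the paper uses Doob's $L^2$ maximal inequality plus the It\^o isometry, which is an immaterial difference), and Gronwall, then pass from $L^2$ convergence of paths to weak convergence of the laws. The only cosmetic difference is that the paper tests directly against Lipschitz functionals via Portmanteau rather than invoking ``convergence in probability implies weak convergence,'' and both hinge, as you correctly emphasize, on the globality of the Lipschitz and linear-growth bounds in \eqref{eq:appRegcondsCpct}.
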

Here,  ${\bf P}_z$ denotes the law of $(Z_\bullet) = (Z_t)_{[0,1]}$ on $C([0,1], \R^N)$ conditioned on $Z_0 = z$. 

\begin{proof}[Proof Sketch]
  Below, $C > 0$ is a constant depending only on $B$ in \eqref{eq:appRegcondsCpct}, the value of which can change from line to line. 

  That \eqref{eq:appRegcondsCpct} implies unique existence of strong solutions is standard and omitted. To prove weak-$*$ continuity, 
  let $z^i, i =1,2$ be two fixed initial conditions and $(Z_t^i), i = 1,2$ be the corresponding diffusions with $Z_0^i = z^i$.
  By the Portmanteau theorem it suffices to show 
  \[\left| \int \Phi d {\bf P}_{z^1} - \int \Phi d {\bf P}_{z^2} \right| = \left| \E \Phi(Z_\bullet^1) - \E \Phi(Z_\bullet^2) \right| \leq C |z^1 - z^2| \]
  for Lipschitz continuous $\Phi : C([0,1], \R^N) \to \R$. Since $\Phi$ is Lipschitz, it suffices to establish \begin{align}\label{eq:LipestAPP} \E \sup_{t \in [0,1]} |Z_t^1 - Z_t^2| \leq C |z^1 - z^2| \,. \end{align} 

  To start, translate \eqref{eq:appSDE} into It\^o form and note that the corresponding vector fields $(\tilde F_i)$ are smooth and also satisfy \eqref{eq:appRegcondsCpct}.  Using the integral form of~\eqref{eq:appSDE} in the sense of It\^o, a standard argument using It\^o isometry and Gronwall's inequality implies
  \begin{align}\label{eq:appGronwall}
    \E |Z_t^1 - Z_t^2|^2 \leq C |z^1 - z^2|^2 \,. 
  \end{align}
  
Next, we derive
  	\begin{align*}
  	\sup_{t \in [0,1]} |Z_t^1 - Z_t^2|^2 \leq C \int_0^1 |Z_s^1 - Z_s^2|^2 \rmd s + C \sum_i \sup_{t \in [0,1]} \underbrace{\left| \int_0^t \left( {F}_i(Z_t^1) - {F}_i(Z_t^2)\right) \rmd W_t^i \right|^2}_{Y_t^i} \, . 
  	\end{align*}
  	Under expectation, the first RHS term is bounded by $C |z^1 - z^2|^2$. The processes $Y_t^i$ are nonnegative submartingales, so using the $L^2$ version of Doob's maximal inequality (\cite[Theorem 1.3.8]{karatzas1991brownian}) we get 
  	\[\E \sup_{t \in [0,1]}| Y_t^i|^2 \leq  4 \E |Y_1^i|^2 \,. \] 
  	Therefore by It\^o isometry and \eqref{eq:appRegcondsCpct}, the RHS is bounded like \begin{align*} & \E \sup_{t \in [0,1]} \left| \int_0^t \left( {F}_i(Z_t^1) - {F}_i(Z_t^2)\right) \rmd W_t^i \right|^2 \leq C \E \left| \int_0^1 \left( {F}_i(Z_t^1) - {F}_i(Z_t^2)\right) \rmd W_t^i   \right |^2  \\
  	& \leq C\E \int_0^1 |F_i(Z^1_t) - F_i(Z^2_t)|^2\rmd{t}  \leq C \E \int_0^1 |Z_t^1 - Z_t^2|^2 \rmd t \leq C |z^1 - z^2|^2.
  	\end{align*}
  	Adding up all these contributions, \eqref{eq:LipestAPP} follows and the proof is complete.

\end{proof}

\medskip 

\noindent {\bf Setting (2): $M$ is noncompact}

\medskip

We can no longer hope for the extension vector fields $(F_i)$ to satisfy \eqref{eq:appRegcondsCpct}, and must rely instead on a drift-type condition as in Assumption \ref{ass:drift}. To this end, fix an isometric embedding of $P^{d-1}$ to $\hat P \subset \R^{N'}$ for some $N' \geq 1$, and set $N = d + N'$. For $i \geq 1$ we can let $F_i = \begin{pmatrix}
  f_i \\ 0 
\end{pmatrix}$ be constant vector fields, while the drift term $F_0$ is built so as to agree with $\tilde f_0$ on $\R^d \times \hat P$ and so that $F_0 = \begin{pmatrix}
  f_0 \\ 0 
\end{pmatrix}$ outside a ball of radius 1 of $\R^d \times \hat P$. Writing $\Lc$ for the infinitesimal generator associated to the resulting diffusion $(Z_t)$, the following result will imply our desired version of Lemma \ref{lem:weakStarContSamplePath}. 



\begin{prop} \label{prop:weakStarCtyApp}
  Assume there exists a $C^2$ function $\mathcal{V} : \R^N \to [1,\infty)$ with the following properties. 
  \begin{itemize}
    \item[(a)] For some constant $c > 0$, it holds that 
  \[\Lc \mathcal{V} \leq c \mathcal{V} \,. \]
  \item[(b)] For all $C \geq 1$, the sublevel set $\mathcal{V}^{-1} [1,C]$ is compact. 
  \end{itemize}
  Then, \eqref{eq:appSDE} admits unique strong solutions for all initial $Z_0$, and moreover, $z \mapsto {\bf P}_{z}$ varies continuously in the weak-$*$ topology on the space of measures defined on $C([0,1], \R^N)$. 
\end{prop}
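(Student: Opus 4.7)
The plan is to combine a Lyapunov moment bound coming from (a) with a truncation argument that reduces everything to the compact-setting proposition already proved just above.

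\textbf{Step 1 (unique strong solutions via nonexplosion).} Since the $F_i$ are smooth, classical local theory gives strong existence/uniqueness up to an explosion time $\tau_\infty$. For $R \geq 1$ let $\tau_R = \inf\{t \geq 0 : \mathcal{V}(Z_t) \geq R\}$; by (b), $\tau_R \nearrow \tau_\infty$. Applying It\^o's formula to $\mathcal{V}(Z_{t \wedge \tau_R})$, using (a), taking expectation (the martingale part is bounded on $[0, t \wedge \tau_R]$), and invoking Gr\"onwall yields
\[ \E_z \mathcal{V}(Z_{t \wedge \tau_R}) \leq \mathcal{V}(z)\, e^{ct}. \]
Markov's inequality then gives $\P_z(\tau_R \leq t) \leq \mathcal{V}(z) e^{ct}/R \to 0$ as $R \to \infty$, proving $\tau_\infty = \infty$ almost surely.

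\textbf{Step 2 (localization to a compact region).} Fix a sequence $z_n \to z$ in $\R^N$ and a bounded Lipschitz functional $\Phi : C([0,1], \R^N) \to \R$. By continuity of $\mathcal{V}$, $M := \sup_n \mathcal{V}(z_n) < \infty$. Given $\epsilon > 0$, the bound of Step 1 lets us pick $R$ so large that $\P_z(\tau_R \leq 1) < \epsilon$ and $\sup_n \P_{z_n}(\tau_R \leq 1) < \epsilon$. Using a smooth cutoff $\chi : \R^N \to [0,1]$ with $\chi \equiv 1$ on $K_R := \mathcal{V}^{-1}([1, R+1])$ and $\chi$ compactly supported, set $\tilde F_0^R := \chi F_0$, leaving the constant vector fields $F_i$, $i \geq 1$, unchanged. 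Because $F_i$ is constant for $i \geq 1$, the associated correction term $H$ in the It\^o form vanishes identically, so the modified vector fields trivially satisfy the compact-setting bounds \eqref{eq:appRegcondsCpct}. Let $\tilde{\bf P}_z^R$ denote the resulting law on $C([0,1], \R^N)$; by construction, $\tilde{\bf P}_z^R$ and ${\bf P}_z$ agree on the event $\{\tau_R > 1\}$.

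\textbf{Step 3 (invoke the compact case and conclude).} By the compact-setting proposition, $z \mapsto \tilde{\bf P}_z^R$ is weak-$*$ continuous, so $\int \Phi \, d\tilde{\bf P}_{z_n}^R \to \int \Phi \, d\tilde{\bf P}_z^R$. Since the original and modified processes coincide on $\{\tau_R > 1\}$,
\[ \left| \int \Phi \, d{\bf P}_z - \int \Phi \, d\tilde{\bf P}_z^R \right| \leq 2 \|\Phi\|_\infty \,\P_z(\tau_R \leq 1) < 2 \|\Phi\|_\infty \epsilon, \]
and analogously for each $z_n$. Combining these three estimates gives $\limsup_n |\int \Phi \, d{\bf P}_{z_n} - \int \Phi \, d{\bf P}_z| \leq 4 \|\Phi\|_\infty \epsilon$; letting $\epsilon \to 0$ completes the proof (bounded Lipschitz test functions are sufficient for weak-$*$ convergence on the Polish space $C([0,1], \R^N)$).

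\textbf{Where the difficulty sits.} The only delicate point is Step 2: one needs the cutoff to simultaneously produce vector fields satisfying \eqref{eq:appRegcondsCpct} \emph{globally} and to agree with the original on a region the path essentially cannot leave by time $1$, uniformly in $n$. The former is routine because only $F_0$ requires modification and the correction $H$ is already zero; the latter hinges on the uniform exit-time bound of Step 1 together with the boundedness of $\sup_n \mathcal{V}(z_n)$, which is why assumption (b) (compact sublevel sets) and assumption (a) (the drift inequality) are both indispensable.
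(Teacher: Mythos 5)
Your proof is correct and follows essentially the same route as the paper's: a drift-condition exit-time estimate uniform over initial conditions in a sublevel set, a cutoff of the coefficients to reduce to the compact-setting proposition, and a three-term triangle inequality. The only cosmetic differences are that you derive the Khasminskii-type bound $\P_z(\tau_R\leq 1)\leq \mathcal V(z)e^{c}/R$ explicitly via It\^o and Gr\"onwall where the paper cites \cite[Theorem 3.5]{khasminskii2012stochastic}, and that you truncate only $F_0$ (legitimately, since the noise fields are constant in this setting) where the paper nominally truncates all the $F_i$.
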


\begin{proof}
  Fix a family of vector fields $F_i^R$ so that (i) $F_i^R = F_i$ on $B_{2R}(0)$ and (ii) $F_i^R$ satisfy \eqref{eq:appRegcondsCpct} (with constants possibly depending on $R$). 
  
  Let $v_0 \geq 1$ be fixed, and throughout we impose that $z^1, z^2 \in \mathcal{V}^{-1}[1,v_0]$. We write $(Z_t^i), i =1 ,2$ for the corresponding paths with $Z_0^i = z^i$. As before, $\Phi : C([0,1], \R^N) \to \R$ is Lipschitz and bounded. Set $Z_t^{i, R}$ to be diffusion path with $(F_i^R)$ replacing $(F_i)$, and observe that $Z_t^{i, R} = Z_t^i$ for all $t \leq \tau^{i,R}$, where
  \[\tau^{i, R} = \inf\{ t > 0 : |Z^i_t| \geq R\} \,. \] 
  We estimate 
  \begin{align*}
   \left| \int \Phi d {\bf P}_{z^1} - \int \Phi d {\bf P}_{z^2}\right|  = & \left|\mathbb{E} \Phi (Z_\bullet^1) -\mathbb{E} \Phi(Z_\bullet^2)\right| \\ 
     \leq &  \left|\mathbb{E} \Phi(Z_\bullet^1) -\mathbb{E} \Phi(Z_\bullet^{1, R})\right| + \left|\mathbb{E} \Phi(Z_\bullet^{1, R}) -\mathbb{E} \Phi(Z_\bullet^{2, R})\right| \\
    & +\left|\mathbb{E} \Phi(Z_\bullet^{2, R}) -\mathbb{E} \Phi(Z_\bullet^{2}) \right|\,. 
\end{align*}

Fix $\epsilon > 0$. Arguing as in the proof of \cite[Theorem 3.5]{khasminskii2012stochastic}, it follows that there exists $R$, depending only on $v_0$, such that 
\begin{align}
  \P(\tau^{i, R} < 1) \leq \frac{\epsilon}{6 K},  
\end{align}
where $K$ is an upper bound for $\Phi$ and $i = 1, 2$. For this value of $R$,  \eqref{eq:LipestAPP} applies with a constant $C = C_R$ depending on $R$, and 
\[| \E \Phi(Z_\bullet^i) - \E \Phi(Z_{\bullet}^{i, R})| \leq \E |(\Phi(Z_\bullet^i) - \Phi(Z_\bullet^{i, R})) {\bf 1}_{\{ \tau^{i, R} < 1\}} | \leq \frac{\epsilon}{3} \,.  \]
Set $ \delta = \epsilon / (3 C_R L)$, where $L$ is a Lipschitz constant for $\Phi$. We conclude that $|\int \Phi d {\bf P}_{z^1} - \int \Phi d {\bf P}_{z^2} | \leq \epsilon$ for all $z^1, z^2 \in \mathcal{V}^{-1}[1,v_0]$ with $|z^1 - z^2| < \delta$. This completes the proof. 
\end{proof}

\subsection{Proof of Lemma \ref{lem:specRelationGeneral4}}\label{app:proofSpecRelation}

We conclude with a proof of Lemma \ref{lem:specRelationGeneral4}, restated below for convenience. 

\begin{lem}
  Let $(\Bc, |\cdot|)$ be a Banach space, and let $\Bc_0 \subset \Bc$ be a dense subspace equipped with its own norm $|\cdot|_0$ with respect to which $(\Bc_0, |\cdot|_0)$ is also Banach, and for which $|x| \leq C |x|_0$ for all $x \in \Bc_0$ and for some $C > 0$. 
  
  Let $A : \Bc \to \Bc$ be a bounded linear operator for which $A(\Bc_0) \subset \Bc_0$ and $A_0 := A|_{\Bc_0} : (\Bc_0, |\cdot|_0) \circlearrowleft$ is a bounded linear operator. Finally, assume $A : (\Bc, |\cdot|) \circlearrowleft$ and $A_0 : (\Bc_0, |\cdot|_0) \circlearrowleft$ are both compact on their respective spaces. Then, 
  \[\rho(A : \Bc \to \Bc) = \rho(A_0 : \Bc_0 \to \Bc_0) \, , \]
  i.e., the spectral radius of $A$ regarded on $\Bc$ coincides with that of $A_0 = A|_{\Bc_0}$ regarded on $\Bc_0$. 
\end{lem}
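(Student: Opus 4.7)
My plan is to prove the two inequalities $\rho(A_0) \leq \rho(A)$ and $\rho(A) \leq \rho(A_0)$ separately, exploiting compactness so that nonzero spectral values are eigenvalues and Riesz projectors are available.

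The inequality $\rho(A_0) \leq \rho(A)$ is immediate: since $A_0$ is compact, every nonzero element of $\Spec(A_0)$ is an eigenvalue; any eigenpair $(\lambda, x)$ with $x \in \Bc_0 \subset \Bc$ satisfies $Ax = A_0 x = \lambda x$, so $\lambda \in \Spec(A)$ as well. Taking suprema over moduli yields the inequality.

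For the reverse inequality, I would assume $\rho(A) > 0$ (otherwise the opposite inequality follows from the first direction, forcing $\rho(A_0)=0$ too) and fix an eigenvalue $\lambda$ of $A$ with $|\lambda| = \rho(A)$, which exists by compactness. The key ingredient is that the resolvents of $A$ and $A_0$ agree on $\Bc_0$ where both are defined: for $z \in \rho(A) \cap \rho(A_0)$ and $y \in \Bc_0$, the element $(z - A_0)^{-1} y \in \Bc_0$ satisfies $(z - A)(z - A_0)^{-1} y = (z - A_0)(z - A_0)^{-1} y = y$ (since $A = A_0$ on $\Bc_0$), so $(z-A)^{-1} y = (z-A_0)^{-1} y$. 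Now suppose for contradiction that $\lambda \notin \Spec(A_0)$. Since $\Spec(A_0) \setminus \{0\}$ is discrete (by compactness of $A_0$), one can choose a sufficiently small circular contour $\gamma$ around $\lambda$ that is contained entirely in $\rho(A) \cap \rho(A_0)$. Define the Riesz projector
\[
P = \frac{1}{2\pi i} \oint_\gamma (z - A)^{-1}\, dz,
\]
which is a nonzero bounded operator on $\Bc$ because $\lambda$ is an isolated eigenvalue of $A$. Restricting to $y \in \Bc_0$ and using the agreement of the resolvents gives
\[
Py = \frac{1}{2\pi i} \oint_\gamma (z - A_0)^{-1} y\, dz,
\]
which is the Riesz projector of $A_0$ at $\lambda$ applied to $y$, and this vanishes since $\lambda \notin \Spec(A_0)$. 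Hence $P \equiv 0$ on $\Bc_0$.

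To conclude, I would use continuity of $P: \Bc \to \Bc$ together with the density of $\Bc_0$ in $(\Bc, |\cdot|)$ to deduce $P \equiv 0$ on $\Bc$, contradicting the fact that $P$ is the projection onto a nontrivial eigenspace of $A$. Therefore $\lambda \in \Spec(A_0)$, which combined with $|\lambda| = \rho(A)$ gives $\rho(A) \leq \rho(A_0)$.

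The potential obstacle is ensuring the identity of resolvents is unambiguous and that the contour $\gamma$ can simultaneously avoid both spectra; both follow cleanly from compactness (so both spectra are discrete off zero) and from the inclusion $A(\Bc_0) \subset \Bc_0$ together with $A_0 = A|_{\Bc_0}$. The continuity of the Riesz projector in the $|\cdot|$-norm, needed for the density argument in the final step, holds because $P$ is defined by a norm-convergent contour integral of the bounded operator-valued function $z \mapsto (z-A)^{-1}$ on $\gamma$.
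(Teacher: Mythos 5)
Your proof is correct, but it takes a genuinely different route from the paper for the harder inequality $\rho(A)\leq\rho(A_0)$. The easy direction (eigenvectors of $A_0$ are eigenvectors of $A$) is identical in both arguments. For the reverse direction, the paper avoids functional calculus entirely: it uses the spectral decomposition of the compact operator $A$ to note that $|A^n f|^{1/n}\to\rho(A)$ for an open dense set of $f\in\Bc$, picks such an $f_0$ in the dense subspace $\Bc_0$, and then squeezes $|A^n f_0|\leq C|A_0^n f_0|_0\leq C\|A_0^n\|_0\,|f_0|_0$ so that Gelfand's formula gives $\rho(A)\leq\rho(A_0)$. You instead argue by contradiction with Riesz projectors, using the agreement of the resolvents of $A$ and $A_0$ on $\Bc_0$ and the density of $\Bc_0$ to show the projector of $A$ at a peripheral eigenvalue would vanish. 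Both proofs lean on the same two pillars (density of $\Bc_0$ in $|\cdot|$ and compactness making nonzero spectrum discrete eigenvalues), but your argument is slightly stronger in output: it applies verbatim to \emph{any} nonzero isolated eigenvalue of $A$, hence actually yields $\Spec(A)\setminus\{0\}=\Spec(A_0)\setminus\{0\}$, not merely equality of spectral radii, whereas the paper's norm-comparison argument only controls the radius. Two small points to tighten: the circle $\gamma$ must have its entire closed disk (minus $\lambda$ for $A$) inside the two resolvent sets, not just the contour itself, so that Cauchy's theorem applies to $z\mapsto(z-A_0)^{-1}y$ and so that $P$ is the projector associated with $\lambda$ alone; and when identifying the two contour integrals one should note that convergence of the Riemann sums in $|\cdot|_0$ implies convergence in $|\cdot|$ via $|x|\leq C|x|_0$, so the $\Bc_0$-valued and $\Bc$-valued integrals coincide as elements of $\Bc$.
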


\begin{proof}
Let $r = \rho(A : \Bc \to \Bc), r_0 = \rho(A_0 : \Bc_0 \to \Bc_0)$. For now, let us assume $r_0	> 0$; the case $r_0 = 0$ is treated at the end. 

\medskip 

\noindent {\bf Step 1}: We first confirm that when $r_0 > 0$, it holds that $r \geq r_0$. For this, let $\lambda_0 \in \sigma(A_0 : \Bc_0 \to \Bc_0)$ such that $|\lambda_0| = r_0$, and let $f_0 \in \Bc_0 \setminus \{ 0 \}$ such that $A_0 f_0 = \lambda_0 f_0$. Then, $A f_0 = \lambda_0 f_0$, hence $\lambda_0 \in \sigma(A : \Bc \to \Bc)$ and so $r_0 = |\lambda_0| \leq r$. 

\medskip 

\noindent {\bf Step 2}: Conversely, we check now that $r \leq r_0$. For this, note that under our assumption $r_0 > 0$, it holds that $r \geq r_0 > 0$ by Step 1. It follows from the spectral theorem for compact operators that for an open and dense set of $f \in \Bc$, $|A^n f|^{1/n} \to r$ as $n \to \infty$. This open and dense set meets $\Bc_0 \setminus \{ 0 \}$ at some vector $f_0$. We estimate
\[|A^n f_0| \leq C |A^n f_0|_0 = C |A^n_0 f_0|_0 \leq C |A^n_0|_0 |f_0|_0 \, . \]
Taking $n$-th roots and $n \to \infty$, the LHS converges to $r$ by construction, while the RHS converges to $r_0$ by the Gelfan'd formula \cite[Lemma IX.1.8]{dunford1964linear}. We conclude that $r \leq r_0$. 

\medskip

\noindent {\bf Step 3}: It remains to address the case $r_0 = 0$, for which it suffices to show that $r = 0$. For the sake of contradiction, suppose $r > 0$. Then, one can follow the argument in Step 2, which implies in particular that $r \leq r_0 = 0$. This is a contradiction. 
\end{proof}

\bibliographystyle{abbrv}
\bibliography{Bibliography.bib}

\end{document}